\newtheorem{theorem}{Theorem}[section]
\newtheorem{proposition}[theorem]{Proposition}
\newtheorem{lemma}[theorem]{Lemma}
\newtheorem{corollary}[theorem]{Corollary}
\newtheorem{definition}[theorem]{Definition}
\newtheorem{remark}[theorem]{Remark}
\DeclareMathOperator{\dir}{{\rm div}}
\newcommand{\1}{\mathds{1}}
\newcommand{\R}{\mathbb{R}}
\newcommand{\abel}{\mathcal{A}}
\newcommand{\abelJ}{\mathcal{J}}
\newcommand{\dist}{{\rm dist}}
\newcommand{\lip}{\mathrm{Lip}}
\newcommand{\normal}{\mathrm{Normal}}
\newcommand{\supp}{{\rm supp}}
\numberwithin{equation}{section}
\numberwithin{algorithm}{section}
\numberwithin{figure}{section}
\numberwithin{table}{section}
\numberwithin{theorem}{section}
\title{Stability and Error Estimates of BV Solutions \\ to the Abel Inverse Problem}
\author{Linan Zhang}
\author{Hayden Schaeffer}
\affil{Department of Mathematical Sciences, Carnegie Mellon University, Pittsburgh, PA 15213. (\text{linanz@andrew.cmu.edu}, { }\text{schaeffer@cmu.edu})}
\date{June, 2018}
\begin{document}
\maketitle


\begin{abstract}
Reconstructing images from ill-posed inverse problems often utilizes total variation regularization in order to recover discontinuities in the data while also removing noise and other artifacts. Total variation regularization has been successful in recovering images for (noisy) Abel transformed data, where object boundaries and data support will lead to sharp edges in the reconstructed image. In this work, we analyze the behavior of $BV$ solutions to the Abel inverse problem, deriving \textit{a priori} estimates on the recovery.  In particular, we provide $L^2$-stability bounds on $BV$ solutions to the Abel inverse problem. These bounds yield error estimates on images reconstructed from a proposed total variation regularized minimization problem. 
\end{abstract}


\section{Introduction}
\label{sec:intro}

The Abel integral equation arises in a variety of fields, including medical imaging, astronomy, geophysics, and electron microscopy \cite{Gorenflo91, Poularikas00, Bracewell03, Epstein08}.  The Abel equation is an essential tool in many aspects of science, since internal structures (such as density, composition, velocity profiles, \textit{etc.}) of an object can be reconstructed from just their line-of-sight projections, with the assumption that the structures are axisymmetric or nearly-axisymmetric. The reconstruction, \textit{i.e.} the inversion of the Abel integral equation, is an ill-posed problem due to a lack of smoothness in the data and solution (typically in the form of discontinuities along object or material boundaries) and the presence of random additive noise.

In practice, several analytical and numerical approaches were proposed to (essentially) deconvolve the integral equation. In \cite{Kalal88, Smith88}, the authors used the Abel-Fourier-Hankel cycle \cite{Bracewell03} to solve the inverse problem. This is based on the projection-slice theorem and uses a special relationship between the Abel and Fourier transforms. In \cite{Dribinski02, Ma07}, the authors used a basis-set expansion (either on the solution or the projection) to solve the inverse problem, where the action of the Abel transform is analytically calculated on each basis functions and the coefficients are solved computationally. It is noted that the linear system for the coefficients required Tikhonov regularization to avoid ill-conditioning. Other approaches include the Cormack inversion \cite{Cormack63, Cormack64, Solomon84} and the onion-peeling method \cite{Dasch92, Pretzier92}. Without additional regularization, these methods tend to amplify noise due to the ill-conditioning of the discrete inverse problem. This is a result of the ill-posedness of the continuous problem \cite{Gorenflo91,Poularikas00}. In addition, these methods are often not suitable for discontinuous data.

To handle discontinuities and noise, it is natural to consider restricting solutions to functions of bounded variation. This is done by adding a total variation penalty on the inverse problem. Total variation (TV) regularization is an essential part of many inversion methods in image processing, originating from the ROF model \cite{Rudin92} for denoising, and now popular in many models, including, for example: compressive sensing and medical imaging \cite{candes2005signal, candes2006robust,  lustig2007sparse}, video processing \cite{li2009user, li2013efficient, yang2013mixing, schaeffer2015space, schaeffer2015real}, and cartoon-texture decomposition \cite{meyer2001oscillating, osher2003image, schaeffer2013low}. Let $\abel$ be the Abel transform, $f$ be the line-of-sight projection of some unknown non-negative axisymmetric function $u$, and $\Omega\subset\R^2$ be the compact support of $f$. In \cite{Asaki05,Asaki06}, the authors presented the following TV regularized minimization problem:
\begin{align}
\min_{u\in BV(U)}\quad & \|u\|_{TV(U)} + \dfrac{\lambda}{2}\|\abel u-f\|_{L^2(\Omega)}^2, \label{Asaki05 eq3}
\end{align}
where $U$ is the rotation of $\Omega$ about the $z$-axis, and thus the TV semi-norm (for axisymmetric functions) is defined as follows:
\begin{align*}
\|u\|_{TV(U)} :=  \sup\left\{ \iint_\Omega u(r,z) \dir(r\phi(r,z))\dd{r}\dd{z}: \phi\in C_c^1(\Omega;\R^2), \, \|\phi\|_{L^\infty(\Omega)}\le1 \right\},
\end{align*}
where the divergence is calculated with respect to the variables $(r,z)$. In \cite{Asaki05}, it was shown that if $f\in L^2(\Omega)$, then Problem \eqref{Asaki05 eq3} has a unique global minimizer in $BV(U)$. In addition, numerical results on discontinuous functions showed that solving Equation \eqref{Asaki05 eq3} yields better results versus unregularized inversion or $H^1$ regularized inversion. 
In \cite{Chartrand10}, the variational model in Equation \eqref{Asaki05 eq3} was modified by adding the box constraint:
\begin{equation*} 
\begin{split}
\min_{u\in BV(U)}\quad & \|u\|_{TV(U)}+ \dfrac{\lambda}{2}\|\abel u-f\|_{L^2(\Omega)}^2 \\
\text{subject to} \quad & u\in[a,b] \text{ on } U,
\end{split}
\end{equation*}
which was noted to have better denoising results than the model without the constraint. In \cite{Abraham08}, the authors proved existence and/or uniqueness results for various models, including the binary minimization problem:
\begin{equation*} 
\begin{split}
\min_{u\in BV(\Omega)}\quad & \|u\|_{TV(\Omega)}  + \dfrac{\lambda}{2}\|\abel u-f\|_{L^2(\Omega)}^2 \\
\text{subject to} \quad & u\in\{0,1\} \text{ a.e. on } \Omega,
\end{split}
\end{equation*}
where the TV semi-norm over $\Omega$ is defined as follows:
\begin{align*}
\|u\|_{TV(\Omega)} :=  \sup\left\{ \iint_\Omega u(r,z) \dir\phi(r,z)\dd{r}\dd{z}: \phi\in C_c^1(\Omega;\R^2), \, \|\phi\|_{L^\infty(\Omega)}\le1 \right\}.
\end{align*}
Additionally, a higher-order TV regularization for the Abel inverse problem was proposed in \cite{Chan15},  where the regularization term is the sum of the TV semi-norm and the $L^1$ norm of the Laplacian. Numerical experiments showed that the addition of the $L^1$ norm of the Laplacian helps to recover piecewise smooth data as opposed to piecewise constant data typically recovered by TV regularized inversion. In each of these variational models, the main regularization involves the TV semi-norm, thus resulting in $BV$ solutions.  In addition, the data is fit with respect to the $L^2$ norm. Therefore, one would expect to control the $L^2$-error between an approximation and the true image by the TV semi-norm of the solution and the $L^2$ norm of the given data. 

\subsection{Contributions of this work}
 
Motivated by the various total variation regularized Abel inversion models, we provide analytic and numerical results on the behavior of $BV$ solutions of the Abel inverse problem. Since many of the related variational models involve the TV semi-norm and an $L^2$ data-fit, we derive error bounds using these terms.

In particular, we provide \textit{a priori} $L^2(U)$-stability bounds of $BV$ solutions, and from these estimates, we derive an $L^2(U)$-error estimate when regularizing the inverse problem by $\|u\|_{TV(\Omega)}$ (the total variation of the solution in cylindrical coordinates). Amongst the choices of TV-regularizers for the Abel inverse problem, our analysis shows that $\|u\|_{TV(\Omega)}$ naturally arises as an error control term. The motivation for $L^2$-error bounds comes from the fact that many recovery results are measured by the root-mean squared error. Several numerical examples verify that our variational model and error bound yields satisfactory results. 

Our derivation also yields an $L^1(U)$-stability bound of $BV$ solutions, which agrees (and simplifies) the $L^1$-bound for 1D problems found in \cite{Gorenflo91}. Note that the $L^2$-bounds in \cite{Gorenflo91} do not apply to $BV$ solutions, and extensions of the results in \cite{Gorenflo91} can be shown to be suboptimal for the problem considered in this work. Therefore, we have derived different and new bounds which are applicable to the problem considered here. In addition, we present the first error results for $BV$ solutions that hold for 1D and 2D data (with 2D and 3D solutions respectively). 

\subsection{Overview}

This paper is organized as follows. In Section \ref{sec: model}, we discuss the Abel inverse problem and a TV regularized model. In Section \ref{sec: theory}, we derive stability and error bounds for $BV$ solutions in both two and three dimensions. In Section \ref{sec: example},  some examples are shown, which verify the theoretical estimates.


\section{Inverse Problem and Variational Method}
\label{sec: model}

In this section, we define the Abel integral operator as well as a variational model used for the inversion. Although there are several choices for the variational model, the particular one used here naturally occurs within our analysis. 


\begin{definition} \label{def: A transform}
Let $u:\R^2\to\R$ be an axisymmetric function. The Abel transform of $u$ is defined as:
\begin{align}
\abel u(x) := 2 \int_x^\infty \dfrac{u(r)r}{\sqrt{r^2-x^2}}\dd{r}, \quad x\in\R^+. \label{eq: def abel 2d} 
\end{align}
If $u:\R^3\to\R$ is an axisymmetric function, then the Abel transform of $u$ is defined as:
\begin{align*}
\abel u(x,z) := 2 \int_x^\infty \dfrac{u(r,z)r}{\sqrt{r^2-x^2}}\dd{r}, \quad (x,z)\in\R^+\times\R. 
\end{align*}
\end{definition}


\begin{figure}[b!]
  \centering
  \includegraphics[scale=0.2]{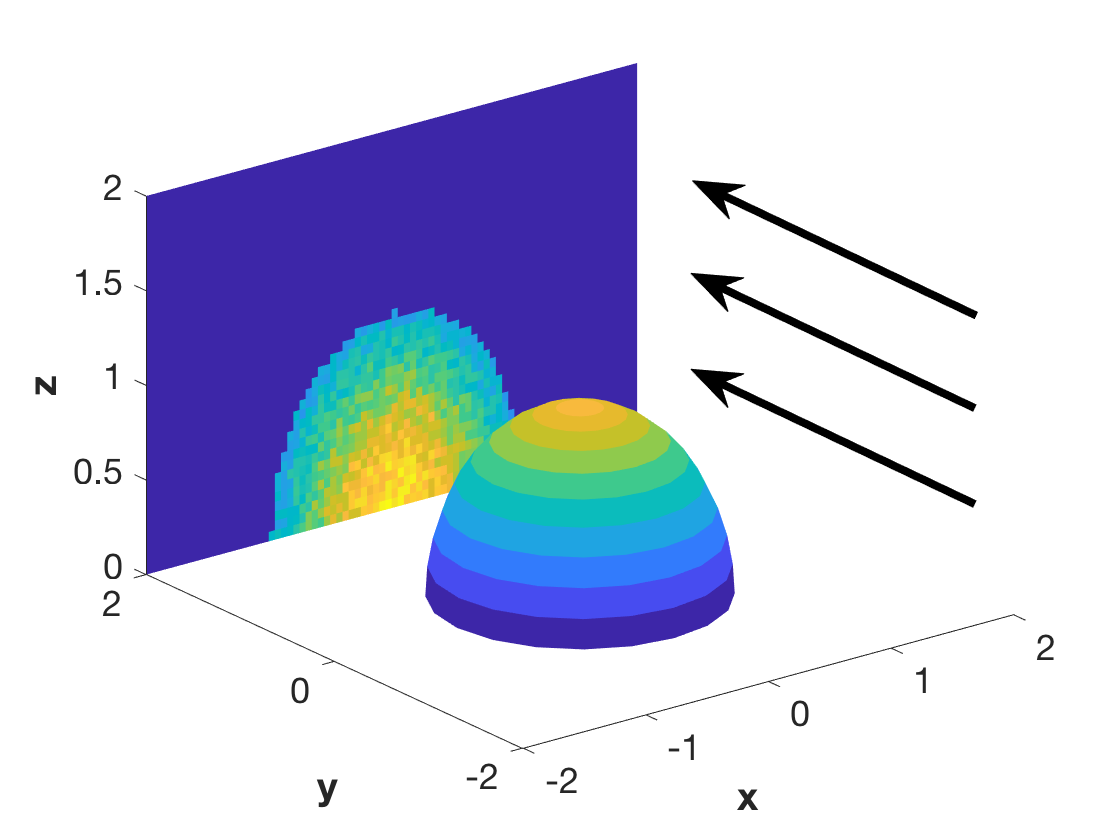}
  \caption{Graphical description of the Abel transform. The axisymmetric object (with compact support) is integrated along lines parallel to the $y$-axis, and a line-of-sight projection is obtained. The line of sight is illustrated by the arrows. The projection on the $(x,z)$-plane contains noise due to measurement error. The inverse problem is to reconstruct the 3D object from the 2D projection.}
  \label{fig:ex0_algorithm}
\end{figure}


Let $u$ be the unknown density of an axisymmetric object and suppose that we are given data $f$, which is the line-of-sight projection of $u$ onto a co-dimension one domain (see Figure~\ref{fig:ex0_algorithm}). In this work, we always assume that $u$ is non-negative. To reconstruct $u$ from $f$, one can solve the following linear system:
\begin{align}
f = \abel u. \label{eq: f=Au model}\tag{\mbox{P$_{\abel}$}}
\end{align}
This is known as the {\it Abel integral equation}. We also assume that the data and solution are compactly supported within the domain of interest. The solution may have discontinuities along object boundaries or may have jumps at the support boundary, thus $BV$ solutions should be expected.

In Theorems \ref{thm: PA uniqueness 2d} and \ref{thm: PA uniqueness 3d}, it is shown that if the function $f$ is of bounded variation, then Problem \eqref{eq: f=Au model} has a unique solution in $L^1$. However, to ensure the solutions are in $BV$, we consider the TV regularized problem:
\begin{align}
\min_{u\in BV(\Omega)}\quad & \|u\|_{TV(\Omega)}+ \dfrac{\lambda}{2}\|\abel u-f\|_{L^2(\Omega)}^2, \label{eq: TV model}\tag{\mbox{P$_{\rm TV}$}}
\end{align}
where $f$ is the given data with compact support $\Omega\subset\R^d$, $d\in\{1,2\}$, and $\|u\|_{TV(\Omega)}$ is the total variation of $u$ in $\Omega$ and is defined by:
\begin{align}
\|u\|_{TV(\Omega)} :=  \sup\left\{ \int_\Omega u(r) \dir\phi(r)\dd{r}: \phi\in C_c^1(\Omega;\R^2), \, \|\phi\|_{L^\infty(\Omega)}\le1 \right\} \label{eq: def TV norm 2d}
\end{align}
if $d=1$, or
\begin{align}
\|u\|_{TV(\Omega)} :=  \sup\left\{ \iint_\Omega u(r,z) \dir\phi(r,z)\dd{r}\dd{z}: \phi\in C_c^1(\Omega;\R^2), \, \|\phi\|_{L^\infty(\Omega)}\le1 \right\} \label{eq: def TV norm 3d}
\end{align}
if $d=2$. Let $u^*$ be the minimizer to Problem \eqref{eq: TV model}. Informally, the regularization term $\|u\|_{TV(\Omega)}$ ensures that $u^*\in BV(\Omega)$ and allows for discontinuities in the radial profile, and the loss term $\|\abel u-f\|_{L^2(\Omega)}^2$ enforces that $\abel u^* \approx f$ in the presence of Gaussian noise.

\begin{remark} \label{rem: difference from Asaki}
Problem \eqref{eq: TV model} differs from Problem \eqref{Asaki05 eq3} in that the regularization term in Problem \eqref{eq: TV model} is the total variation of $u$ with respect to the cylindrical coordinates, while the regularization term in Problem \eqref{Asaki05 eq3} is the total variation of $u$ with respect to the Cartesian coordinates. See Proposition \ref{prop: TV norm cartesian}.
\end{remark}

Without loss of generality, assume that the support of the data is contained in $[0,1)\subset\R$, if the data is 1D, or $[0,1)\times[-1,1]\subset\R^2$, if the data is 2D. The results developed in the subsequent sections can be generalized to any bounded domain in the same form by introducing an additional constant depending only on the size of the domain. We denote $\Omega = [0,1]\times[-1,1]\subset\R^2$, $\Omega_h = [h,1]\times[-1,1]\subset\R^2$ where $h\in(0,1/2]$, and $U=\{(x,y)\in\R^2:x^2+y^2\le1\}\times[-1,1]\subset\R^3$ in order to simplify the notations.


\section{Stability and Error Estimates}
\label{sec: theory}

In this section, we analyze the stability of $BV$ solutions to Problem \eqref{eq: f=Au model} and estimate the error of $BV$ solutions to Problem \eqref{eq: TV model}. In particular, we show that given a minimizer of Problem \eqref{eq: TV model}, we can control the $L^2$ norm of the solution in terms of a fixed multiple of the $L^2$ norm of the data. This provides a quantitative error bound that is not common for this type of model. The stability estimates derived in this work are:
\begin{align*}
\|u\|_{L^2(B(0,1))} &\le C\|u\|_{TV(0,1)}^{1/2}\|f\|_{L^2(0,1)}^{1/2}
\end{align*}
if the data is 1D (solutions are 2D), and
\begin{align*}
\|u\|_{L^2(U)} &\le C\|u\|_{L^\infty(\Omega)}^{1/3}\|u\|_{TV(\Omega)}^{1/3}\|f\|_{L^2(\Omega)}^{1/3}
\end{align*}
if the data is 2D (solutions are 3D). Note that the right-hand side of the inequalities are expressed in terms of norms and semi-norms over co-dimension one regions. These bounds yield an error estimate for Problem \eqref{eq: TV model}.

The motivation for deriving $L^2$-bounds is to provide variance control over the $BV$ regularized least-squares solution. In practical applications, the fit of the recovered solution is measured by the root-mean squared error. Therefore, it is natural to look for a theoretical bound on the $L^2$-error. In related variational models, the recovered images are assumed to be in $BV$, and the data-fit is measured by the $L^2$ norm. Thus, our error bounds can be controlled by the (easily available) TV semi-norm of the solution and the $L^2$ norm of the given data.  

We first introduce the following integral transform for functions, which is known as the Weyl fractional integral of order $1/2$ and is closely related to the Abel transform \cite{Poularikas00}.


\begin{definition} \label{def: J transform}
Let $v$ be a scalar-valued function defined on $\R^+$. The $\abelJ$-transform of $v$ is defined as:
\begin{align}
\abelJ v(x) := \dfrac{1}{\sqrt{\pi}} \int_x^\infty \dfrac{v(r)}{\sqrt{r-x}}\dd{r}, \quad x\in\R^+. \label{eq: def abelJ 2d}
\end{align}
If $v$ is a scalar-valued function defined on $\R^+\times\R$, the $\abelJ$-transform of $v$ is defined as:
\begin{align}
\abelJ v(x,z) := \dfrac{1}{\sqrt{\pi}} \int_x^\infty \dfrac{v(r,z)}{\sqrt{r-x}}\dd{r}, \quad (x,z)\in\R^+\times\R. \label{eq: def abelJ 3d}
\end{align}
\end{definition}


Let $v$ be an unknown function defined on $\R^+$ or $\R^+\times\R$, and suppose that we are given the $\abelJ$-transform of $v$, denoted by $g$. An analogy to the Abel integral equation \eqref{eq: f=Au model} is the following:
\begin{align}
g = \abelJ v. \label{eq: g=Jv model}\tag{\mbox{P$_{\abelJ}$}}
\end{align}
The connection between Problems \eqref{eq: f=Au model} and \eqref{eq: g=Jv model} is shown in Sections \ref{sec: theory2d} and \ref{sec: theory3d}. In this work, we assume that $v$ is also non-negative.

In the following subsections, we will focus on $L^2$-stability estimates of $BV$ solutions to Problems \eqref{eq: f=Au model} and \eqref{eq: g=Jv model}, and for the sake of completeness, we will provide the corresponding $L^1$-stability estimates in Appendix \ref{sec: L1 bound}.


\subsection{$L^2$-Stability Estimates for $BV$ Solutions in 2D}
\label{sec: theory2d}

Before analyzing the stability of $BV$ solutions to Problem \eqref{eq: f=Au model}, we discuss the existence and uniqueness of solutions to the inverse problem over different spaces of functions as well as the relationship between the Abel transform and the $\abelJ$-transform. 

Assume that $u:\R^2\to\R$ is an axisymmetric function which is compactly supported within the ball $B(0,1)\subset\R^2$, and that $v:\R^+\to\R$ is defined as $v(r^2) = u(r)$. Changing variables shows that:
\begin{align}
\abel u(x) &= 2 \int_x^1 \dfrac{u(r)r}{\sqrt{r^2-x^2}}\dd{r} = \int_{x^2}^1 \dfrac{u(\sqrt{r})}{\sqrt{r-x^2}}\dd{r} = \sqrt{\pi}\abelJ v(x^2), \quad x\in[0,1]. \label{eq: A-J connection 2d}
\end{align}
The analysis of Problem \eqref{eq: g=Jv model} can thus be related back to the Abel inverse problem through a simple change of variables. The following theorem states the existence and uniqueness of solutions to Problem \eqref{eq: g=Jv model}. 


\begin{theorem} \label{thm: PJ uniqueness 2d}
{\rm (restated from \cite{Gorenflo91})}
Problem \eqref{eq: g=Jv model} has a unique solution in $L^1(0,1)$, which is given by:
\begin{align}
v(r) = -\dfrac{1}{\sqrt{\pi}}\int_r^1 \dfrac{\dd{g(x)}}{\sqrt{x-r}}, \label{eq: PJ solution 2d}
\end{align}
provided that the function $g$ is of bounded variation, $0\le g(0)<\infty$, and $\supp(g)\subset[0,1)$. Here the integral is in the Lebesgue-Stieltjes sense. 
\end{theorem}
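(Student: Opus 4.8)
The plan is to exploit the fact that $\abelJ$ is the Weyl fractional integral of order $1/2$, so that composing it with itself yields an ordinary (order one) integration. Concretely, I would first establish the semigroup identity
$$\abelJ^2 v(x) = \int_x^\infty v(r)\,\dd{r}$$
for every $v\in L^1(0,1)$ (extended by zero outside its support). Writing $\abelJ(\abelJ v)$ as a double integral and interchanging the order of integration by Tonelli (legitimate since we may assume $v\ge 0$), the inner integral collapses to the Beta integral
$$\int_x^r \frac{\dd{s}}{\sqrt{(s-x)(r-s)}} = B(1/2,1/2) = \pi,$$
which exactly cancels the prefactor $1/\pi = 1/\Gamma(1/2)^2$ and leaves the tail integral of $v$.

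With this identity in hand, I would apply $\abelJ$ to both sides of $g=\abelJ v$ to obtain $\abelJ g(x) = \int_x^\infty v(r)\,\dd{r}$, and differentiate in $x$ to get $v(x) = -\frac{d}{dx}\abelJ g(x)$. This single computation delivers both halves of the theorem simultaneously: it shows that any $L^1$ solution must coincide almost everywhere with $-\frac{d}{dx}\abelJ g$ (uniqueness), and conversely that this expression indeed inverts $\abelJ$ (existence). Equivalently, uniqueness follows at once from the semigroup identity, since $\abelJ(v_1-v_2)=0$ forces $\int_x^\infty (v_1-v_2)\,\dd{r}=0$ for all $x$, hence $v_1=v_2$ a.e.

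It then remains to rewrite $-\frac{d}{dx}\abelJ g$ in the stated Lebesgue--Stieltjes form, and this is where the bounded-variation hypothesis enters. Because $g$ is only $BV$, one cannot differentiate $\int_x^1 g(s)(s-x)^{-1/2}\,\dd{s}$ directly under the integral sign, as the kernel is singular at the moving lower limit. The remedy is to integrate by parts in the Stieltjes sense \emph{before} differentiating: using
$$\int_x^1 \frac{g(s)}{\sqrt{s-x}}\,\dd{s} = -2\int_x^1 \sqrt{s-x}\,\dd{g(s)},$$
where the boundary contributions vanish by $g(1^-)=0$ (from $\supp(g)\subset[0,1)$) and by $\sqrt{s-x}|_{s=x}=0$, the integrand becomes continuous and differentiation under the integral is now justified, the vanishing lower-limit term again removing the Leibniz boundary contribution. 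This yields $v(r) = -\frac{1}{\sqrt{\pi}}\int_r^1 \frac{\dd{g(x)}}{\sqrt{x-r}}$, as claimed; the hypotheses $0\le g(0)<\infty$ and compact support guarantee that the Stieltjes measure $\dd{g}$ is finite and that all integrals converge.

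The main obstacle is the analytic justification of this last step: making the Stieltjes integration by parts and the subsequent differentiation rigorous for a merely $BV$ datum, rather than the purely formal manipulations above. In particular one must verify that $\abelJ g$ is absolutely continuous, so that its a.e. derivative genuinely recovers $v$, and that the interchange of differentiation with the Stieltjes integral is valid near the singular endpoint --- precisely the point at which the order-$1/2$ smoothing of $\abelJ$ and the finiteness of the variation of $g$ must be used together.
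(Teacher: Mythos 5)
Your uniqueness argument is sound and is essentially the paper's own: apply $\abelJ$ once more to a null-space element, interchange the integrals, and use the Beta identity $\int_x^y (r-x)^{-1/2}(y-r)^{-1/2}\dd{r}=\pi$ to conclude that all tail integrals of $v$ vanish, hence $v=0$ a.e. (One small slip: in the uniqueness application the function is $v_1-v_2$, which need not be nonnegative, so you should invoke Fubini justified by absolute convergence, not Tonelli via ``$v\ge0$''.) The genuine gap is in the existence half. Your central computation, $\abelJ g(x)=\int_x^1 v(r)\dd{r}$, \emph{starts} from $g=\abelJ v$, i.e.\ it presupposes that an $L^1$ solution exists; it therefore cannot ``deliver both halves simultaneously.'' It only shows that \emph{if} a solution exists, it must equal $-\frac{d}{dx}\abelJ g$ a.e. To extract existence from your route you would still need to (i) prove that $\abelJ g$ is absolutely continuous for merely $BV$ data $g$, so that $v:=-\left(\abelJ g\right)'$ is well defined with $\int_x^1 v(r)\dd{r}=\abelJ g(x)$, (ii) prove $v\in L^1(0,1)$, and (iii) use injectivity of $\abelJ$ to upgrade $\abelJ^2 v=\abelJ g$ to $\abelJ v=g$. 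You yourself flag (i) as ``the main obstacle'' and leave it open --- and it is genuinely delicate, because the difference quotients of the singular kernel $(s-x)^{-1/2}$ are not uniformly dominated against an arbitrary finite measure $\dd{g}$ (which may have atoms or a singular continuous part), so no routine differentiation-under-the-integral lemma applies.

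The paper's proof shows this obstacle can be bypassed entirely: no differentiation is ever performed. One takes the Stieltjes formula \eqref{eq: PJ solution 2d} as the \emph{definition} of the candidate $v$, applies $\abelJ$ to it, interchanges the two integrals by Fubini, and the inner Beta integral collapses to $\pi$, giving $\abelJ v(x)=-\int_x^1\dd{g(y)}=g(x)$ directly, where $\supp(g)\subset[0,1)$ kills the boundary contribution. Membership $v\in L^1(0,1)$ is then proved separately via the Jordan decomposition $g=g_1-g_2$ into bounded decreasing pieces, computing again by Fubini that $-\int_0^1\int_r^1(x-r)^{-1/2}\dd{g_i(x)}\dd{r}=-2\int_0^1\sqrt{x}\,\dd{g_i(x)}\le2g_i(0)<\infty$; this is where $0\le g(0)<\infty$ enters quantitatively, a step your sketch waves at but does not carry out. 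If you want to salvage your route, the repair is the same device: verify $\int_r^1 v(x)\dd{x}=\abelJ g(r)$ by applying Fubini to your candidate formula rather than by differentiating $\abelJ g$ --- but at that point you will have reproduced the paper's computation, and the detour through $-\frac{d}{dx}\abelJ g$ becomes superfluous.
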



The proof of Theorem \ref{thm: PJ uniqueness 2d} appears in Appendix \ref{sec: appendix}. An analogy of Theorem \ref{thm: PJ uniqueness 2d} for Problem \eqref{eq: f=Au model} can be derived immediately from Equations \eqref{eq: A-J connection 2d} and \eqref{eq: PJ solution 2d}.


\begin{theorem}\label{thm: PA uniqueness 2d}
Problem \eqref{eq: f=Au model} has a unique solution in $L^1(0,1)$, which is given by:
\begin{align*}
u(r) = -\dfrac{1}{\pi}\int_r^1 \dfrac{\dd{f(x)}}{\sqrt{x^2-r^2}}, 
\end{align*}
provided that the function $f$ is of bounded variation, $0\le f(0)<\infty$, and $\supp(f)\subset[0,1)$.
\end{theorem}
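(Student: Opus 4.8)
The plan is to reduce the statement to Theorem~\ref{thm: PJ uniqueness 2d} through the change of variables recorded in Equation~\eqref{eq: A-J connection 2d}, and then to transport the inversion formula, the membership, and the uniqueness back to the original radial variable. Concretely, I would set $v(s) := u(\sqrt{s})$ and introduce the transformed data $g(s) := \tfrac{1}{\sqrt{\pi}}\,f(\sqrt{s})$ for $s\in[0,1]$. By \eqref{eq: A-J connection 2d} we have $f(x)=\abel u(x)=\sqrt{\pi}\,\abelJ v(x^2)$, so the substitution $s=x^2$ gives $\abelJ v(s)=\tfrac{1}{\sqrt{\pi}}f(\sqrt{s})=g(s)$; that is, $v$ solves Problem~\eqref{eq: g=Jv model} with data $g$. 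Thus the Abel equation for $u$ is exactly equivalent to a $\abelJ$-equation for $v$.

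The first step is to verify that $g$ inherits the hypotheses of Theorem~\ref{thm: PJ uniqueness 2d}. Since $s\mapsto\sqrt{s}$ is continuous and monotone on $[0,1]$, the composition is of bounded variation whenever $f$ is (with no increase in total variation), so $g\in BV$. Moreover $g(0)=\tfrac{1}{\sqrt{\pi}}f(0)$ gives $0\le g(0)<\infty$, and $\supp(g)\subset[0,1)$ because $\sqrt{\cdot}$ maps $[0,1)$ into $[0,1)$. Theorem~\ref{thm: PJ uniqueness 2d} then produces a unique $v\in L^1(0,1)$ solving $\abelJ v=g$, namely $v(s)=-\tfrac{1}{\sqrt{\pi}}\int_s^1 \tfrac{\dd{g(x)}}{\sqrt{x-s}}$.

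Next I would recover $u$ via $u(r)=v(r^2)$ and convert the formula. Writing $v(r^2)=-\tfrac{1}{\sqrt{\pi}}\int_{r^2}^1 \tfrac{\dd{g(x)}}{\sqrt{x-r^2}}$ and substituting $x=\xi^2$ (monotone on $[r,1]$), I use that the function $\xi\mapsto g(\xi^2)$ equals $\tfrac{1}{\sqrt{\pi}}f(\xi)$, so its Lebesgue--Stieltjes measure is $\tfrac{1}{\sqrt{\pi}}\dd{f(\xi)}$, while $\sqrt{x-r^2}=\sqrt{\xi^2-r^2}$; this produces exactly $u(r)=-\tfrac{1}{\pi}\int_r^1 \tfrac{\dd{f(\xi)}}{\sqrt{\xi^2-r^2}}$. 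The one delicate point is the rigorous transformation of the Stieltjes measure under the monotone map $x=\xi^2$, which I would justify either by pushing $\dd{f}$ forward under $\xi\mapsto\xi^2$ or by an approximation/integration-by-parts argument valid for $BV$ functions; I expect this to be the only genuinely nontrivial step.

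Finally I need $u\in L^1(0,1)$ and uniqueness. For membership I would apply Tonelli to the explicit formula against the total-variation measure $\dd{|f|}$: after swapping the order of integration over the region $0\le r\le\xi\le1$, the inner integral is $\int_0^\xi (\xi^2-r^2)^{-1/2}\,\dd{r}=\tfrac{\pi}{2}$, a constant, so $\|u\|_{L^1(0,1)}\le \tfrac{1}{2}\,\mathrm{Var}(f)<\infty$ (this also yields the $L^1$-stability bound). For uniqueness, if $\tilde u\in L^1(0,1)$ also solves $\abel\tilde u=f$, set $\tilde v(s)=\tilde u(\sqrt{s})$; since $\int_0^1|\tilde v(s)|\,\dd{s}=\int_0^1|\tilde u(r)|\,2r\,\dd{r}\le 2\|\tilde u\|_{L^1(0,1)}$, we have $\tilde v\in L^1(0,1)$ and $\abelJ\tilde v=g$, so $\tilde v=v$ by Theorem~\ref{thm: PJ uniqueness 2d} and hence $\tilde u=u$ a.e. on $(0,1)$.
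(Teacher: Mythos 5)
Your proposal is correct and takes essentially the same route as the paper, which obtains Theorem \ref{thm: PA uniqueness 2d} directly from the change of variables in Equation \eqref{eq: A-J connection 2d} together with the inversion formula \eqref{eq: PJ solution 2d} of Theorem \ref{thm: PJ uniqueness 2d}. Your explicit Tonelli computation giving $\|u\|_{L^1(0,1)}\le \tfrac{1}{2}\,\mathrm{Var}(f)$ is a genuinely useful detail the paper leaves implicit, since $v\in L^1(0,1)$ alone does not transfer to $u(r)=v(r^2)$ under the substitution (the Jacobian factor $1/(2\sqrt{s})$ is unbounded near the origin).
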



Theorems \ref{thm: PJ uniqueness 2d} and \ref{thm: PA uniqueness 2d} guarantee only that the solution exists in $L^1$. If it is known {\it a priori} that the solution is in $BV$, then it can be shown that the data is H{\"o}lder continuous.


\begin{theorem} \label{thm: J bv continuity 2d}
{\rm (restated from \cite{Bergounioux16})}
The operator $\abelJ$ defined by Equation \eqref{eq: def abelJ 2d} is a continuous operator from $BV(0,1)$ into $C^{0,1/2}(0,1)$.
\end{theorem}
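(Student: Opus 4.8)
The plan is to reduce the statement to an elementary kernel estimate, using two facts: the kernel $(r-x)^{-1/2}$ is integrable despite its singularity, and $BV(0,1)$ embeds continuously into $L^\infty(0,1)$. Since $v$ is supported in $(0,1)$, for $x\in(0,1)$ I would work with $\abelJ v(x)=\tfrac{1}{\sqrt\pi}\int_x^1 v(r)(r-x)^{-1/2}\dd{r}$. A first, easy step establishes the sup bound $|\abelJ v(x)|\le \tfrac{1}{\sqrt\pi}\|v\|_{L^\infty(0,1)}\int_x^1(r-x)^{-1/2}\dd{r}=\tfrac{2}{\sqrt\pi}\sqrt{1-x}\,\|v\|_{L^\infty(0,1)}$, hence $\|\abelJ v\|_{L^\infty(0,1)}\le \tfrac{2}{\sqrt\pi}\|v\|_{L^\infty(0,1)}$, which in particular shows $\abelJ v$ is well defined.

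The core of the argument is the Hölder seminorm estimate. I would fix $0<x_1<x_2<1$, set $\delta=x_2-x_1$, and split the difference into a contribution from the shifted lower limit and a contribution from the change in the kernel:
\[
\abelJ v(x_1)-\abelJ v(x_2)=\frac{1}{\sqrt\pi}\int_{x_1}^{x_2}\frac{v(r)}{\sqrt{r-x_1}}\dd{r}+\frac{1}{\sqrt\pi}\int_{x_2}^{1}v(r)\left(\frac{1}{\sqrt{r-x_1}}-\frac{1}{\sqrt{r-x_2}}\right)\dd{r}.
\]
The first integral is bounded by $\|v\|_{L^\infty(0,1)}\int_{x_1}^{x_2}(r-x_1)^{-1/2}\dd{r}=2\sqrt\delta\,\|v\|_{L^\infty(0,1)}$. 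For the second, I note that $r\mapsto(r-x_1)^{-1/2}-(r-x_2)^{-1/2}$ has a fixed (negative) sign on $(x_2,1)$, so I can pull out $\|v\|_{L^\infty(0,1)}$ and evaluate the elementary integral explicitly: using the antiderivatives $2\sqrt{r-x_1}$ and $2\sqrt{r-x_2}$ gives $\int_{x_2}^1\big((r-x_2)^{-1/2}-(r-x_1)^{-1/2}\big)\dd{r}=2\sqrt{1-x_2}-2\sqrt{1-x_1}+2\sqrt\delta\le 2\sqrt\delta$, where the first two (non-positive) terms only help. Altogether $|\abelJ v(x_1)-\abelJ v(x_2)|\le \tfrac{4}{\sqrt\pi}\|v\|_{L^\infty(0,1)}\sqrt\delta$, i.e.\ $[\abelJ v]_{C^{0,1/2}(0,1)}\le \tfrac{4}{\sqrt\pi}\|v\|_{L^\infty(0,1)}$.

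Combining the two estimates yields $\|\abelJ v\|_{C^{0,1/2}(0,1)}\le \tfrac{6}{\sqrt\pi}\|v\|_{L^\infty(0,1)}$. Since $BV(0,1)$ embeds continuously into $L^\infty(0,1)$, there is a constant $C$ with $\|v\|_{L^\infty(0,1)}\le C\|v\|_{BV(0,1)}$, and the linearity of $\abelJ$ upgrades this boundedness to continuity, as claimed.

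I expect the main obstacle to be the second integral near the endpoint $r=x_2$, where $(r-x_2)^{-1/2}$ blows up: the point to make carefully is that this singularity is integrable and that, once combined with the $(r-x_1)^{-1/2}$ term, the near-$x_2$ contributions are precisely what produce the $\sqrt\delta$ rate rather than something worse. A secondary point deserving a remark is the behavior at $r=1$ when $v(1^-)\ne 0$; it is harmless here because I never integrate by parts, but it is the reason one should expect Hölder-$1/2$ rather than Lipschitz regularity. I would also emphasize that the argument in fact only uses $v\in L^\infty(0,1)$, with the $BV$ hypothesis entering solely through the embedding; this is consistent with the role $BV$ plays in the accompanying uniqueness statements.
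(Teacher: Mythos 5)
Your proof is correct and is essentially the paper's argument: the paper proves this theorem by invoking Proposition \ref{prop: J linf continuity 2d} (continuity of $\abelJ$ from $L^\infty(0,1)$ into $C^{0,1/2}(0,1)$, proved in Appendix \ref{sec: appendix} via exactly your decomposition into the near-endpoint integral and the kernel-difference integral, with the same constants $2\pi^{-1/2}$, $4\pi^{-1/2}$, and $6\pi^{-1/2}$) together with the 1D embedding of $BV$ into $L^\infty$ (Poincar\'{e}'s inequality). Your closing observation that only $v\in L^\infty$ is used, with $BV$ entering solely through the embedding, is precisely how the paper structures the result.
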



Here $C^{0,1/2}$ denotes the space of all functions on $[0,1]$ which satisfy the H{\"o}lder condition of order $1/2$. This result is a direct consequence of Proposition \ref{prop: J linf continuity 2d} and Poincar\'{e}'s inequality in 1D. An analogy of Theorem \ref{thm: J bv continuity 2d} can be derived immediately from Equation \eqref{eq: A-J connection 2d}.


\begin{corollary} \label{cor: A bv continuity 2d}
The operator $\abel$ defined by Equation \eqref{eq: def abel 2d} is a continuous operator from $BV(0,1)$ into $C^{0,1/2}(0,1)$.
\end{corollary}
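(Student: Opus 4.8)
The plan is to exploit the change-of-variables identity \eqref{eq: A-J connection 2d}, namely $\abel u(x) = \sqrt{\pi}\,\abelJ v(x^2)$ with $v(s) := u(\sqrt{s})$, and reduce the claim to Theorem \ref{thm: J bv continuity 2d}. The whole argument factors as: (i) the substitution $u \mapsto v$ is bounded $BV(0,1) \to BV(0,1)$; (ii) Theorem \ref{thm: J bv continuity 2d} then gives $\abelJ v \in C^{0,1/2}(0,1)$ with a norm bound; and (iii) precomposition with the smooth map $x \mapsto x^2$ transfers the H\"older-$1/2$ regularity back to $\abel u$.

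First I would verify step (i). For the $L^1$ component, the substitution $s = r^2$ gives
\begin{align*}
\|v\|_{L^1(0,1)} = \int_0^1 |u(\sqrt{s})|\,\dd{s} = 2\int_0^1 |u(r)|\,r\,\dd{r} \le 2\|u\|_{L^1(0,1)}.
\end{align*}
For the variation, the key point is that $s \mapsto \sqrt{s}$ is a continuous increasing bijection of $[0,1]$ onto itself, so every partition of the $s$-variable corresponds bijectively to a partition of the $r$-variable; hence the essential variation is preserved, $\mathrm{Var}(v;(0,1)) = \mathrm{Var}(u;(0,1))$. Combining these yields $\|v\|_{BV(0,1)} \le 2\|u\|_{BV(0,1)}$, so the substitution is a bounded linear map.

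For steps (ii) and (iii), Theorem \ref{thm: J bv continuity 2d} gives $\abelJ v \in C^{0,1/2}(0,1)$ with $\|\abelJ v\|_{C^{0,1/2}(0,1)} \le C\|v\|_{BV(0,1)} \le 2C\|u\|_{BV(0,1)}$. The supremum norm of $\abel u$ transfers directly via \eqref{eq: A-J connection 2d} since $x \mapsto x^2$ maps $[0,1]$ onto $[0,1]$. For the H\"older seminorm I would use $|x^2 - y^2| = |x-y|(x+y) \le 2|x-y|$ on $[0,1]$, so that
\begin{align*}
|\abel u(x) - \abel u(y)| = \sqrt{\pi}\,|\abelJ v(x^2) - \abelJ v(y^2)| \le \sqrt{\pi}\,[\abelJ v]_{1/2}\,|x^2 - y^2|^{1/2} \le \sqrt{2\pi}\,[\abelJ v]_{1/2}\,|x-y|^{1/2}.
\end{align*}
Putting the two estimates together produces $\|\abel u\|_{C^{0,1/2}(0,1)} \le C'\|u\|_{BV(0,1)}$, which is exactly the asserted continuity.

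The only genuine technical point is the invariance of the total variation under the monotone but nonlinear reparametrization $s \mapsto \sqrt{s}$ in step (i): one must phrase this with the measure-theoretic (essential) definition of $BV$ so that the blow-up of the Jacobian $\tfrac{1}{2\sqrt{s}}$ at the origin plays no role. Because the reparametrization is monotone and continuous, composition leaves the essential variation unchanged and this apparent singularity is harmless; every other step is routine, which is why the result follows immediately from \eqref{eq: A-J connection 2d} and Theorem \ref{thm: J bv continuity 2d}.
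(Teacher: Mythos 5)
Your proposal is correct and takes essentially the same approach as the paper: the paper obtains this corollary directly from the change-of-variables identity \eqref{eq: A-J connection 2d} together with Theorem \ref{thm: J bv continuity 2d}, which is precisely your reduction. The details you supply (invariance of the essential variation under the monotone reparametrization $s\mapsto\sqrt{s}$, and the Lipschitz estimate $|x^2-y^2|\le 2|x-y|$ on $[0,1]$) are exactly what the paper leaves implicit in the word ``immediately,'' and you handle them correctly.
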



The statement above shows that, in practice, one may need to regularize the Abel inverse problem in order to ensure that the solutions are in $BV$. 

We now focus on the stability of $BV$ solutions to Problems \eqref{eq: f=Au model} and \eqref{eq: g=Jv model}. The following lemma provides two basic estimates for the ``running average,'' $v_h$, of a function $v$ defined on the interval $[0,1]\subset\R$, where $h\in(0, 1/2]$. The introduction of the auxiliary function $v_h$ helps us to bound $v$ in $L^2(h,1)$, with a bound that is a function of $h$. Then one can minimize the bound in $h$ over $(0, 1/2]$ to obtain a bound of $v$ in $L^2(0,1)$.


\begin{lemma} \label{lem: basic bounds 2d}
Let $v\in W^{1,1}(0,1)$. Let $h\in(0, 1/2]$ and define $v_h:[h,1]\to\R$ by:
\begin{align}
v_h(x) := \dfrac{1}{h} \int_{x-h}^x v(y)\dd{y}. \label{eq: running average 2d}
\end{align}
Then the following two estimates hold:
\begin{align}
\|v-v_h\|_{L^2(h,1)} &\le 3^{-1/2}h^{1/2}\|v'\|_{L^1(0,1)}, \label{eq: basic bound 2d} \\
\|v_h\|_{L^\infty(h,1)} &\le \|v\|_{L^\infty(0,1)}. \label{eq: vh inf bound 2d}
\end{align}
\end{lemma}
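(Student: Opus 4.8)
The plan is to treat the two estimates separately, with the $L^\infty$ bound \eqref{eq: vh inf bound 2d} being essentially immediate and the $L^2$ bound \eqref{eq: basic bound 2d} carrying all the work. For \eqref{eq: vh inf bound 2d}, I would simply observe that $v_h(x)$ is an average of $v$ over an interval of length $h$, so that for every $x\in[h,1]$ (where $[x-h,x]\subset[0,1]$),
\begin{align*}
|v_h(x)| \le \frac{1}{h}\int_{x-h}^x |v(y)|\,\dd{y} \le \|v\|_{L^\infty(0,1)},
\end{align*}
and taking the supremum over $x$ gives the claim.

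For \eqref{eq: basic bound 2d}, the first step is to rewrite the difference $v-v_h$ as a weighted integral of the derivative. Writing $v(x)-v_h(x)=\frac{1}{h}\int_{x-h}^x\bigl(v(x)-v(y)\bigr)\,\dd{y}$ and invoking the fundamental theorem of calculus (valid since $v\in W^{1,1}$) gives $v(x)-v(y)=\int_y^x v'(t)\,\dd{t}$. Interchanging the order of integration over the triangle $\{x-h\le y\le t\le x\}$ by Fubini's theorem then yields
\begin{align*}
v(x)-v_h(x) = \frac{1}{h}\int_{x-h}^x (t-x+h)\,v'(t)\,\dd{t} = \int_{x-h}^x K(x-t)\,v'(t)\,\dd{t},
\end{align*}
where $K(u):=1-u/h$ for $u\in[0,h]$ and $K(u):=0$ otherwise. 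The key observation is that this exhibits $v-v_h$ as the convolution $K*v'$ (after extending $v'$ by zero to all of $\R$), with a triangular kernel $K$ supported on $[0,h]$ and satisfying $0\le K\le 1$.

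The second step is to apply Young's convolution inequality in the form $\|K*v'\|_{L^2(\R)}\le\|K\|_{L^2(\R)}\,\|v'\|_{L^1(\R)}$, which is precisely the exponent pairing ($L^1*L^2\to L^2$) that produces an $L^1$ norm of $v'$ on the right-hand side; note that the zero extension leaves $\|v'\|_{L^1}$ unchanged. Restricting the convolution back to $x\in[h,1]$---where the window $[x-h,x]$ lies inside $[0,1]$, so that $K*v'$ agrees there with $v-v_h$---only decreases the left-hand $L^2$ norm, so $\|v-v_h\|_{L^2(h,1)}\le\|K*v'\|_{L^2(\R)}$. A direct computation gives $\|K\|_{L^2(\R)}^2=\int_0^h (1-u/h)^2\,\dd{u}=h/3$, hence $\|K\|_{L^2(\R)}=3^{-1/2}h^{1/2}$, and combining these estimates yields \eqref{eq: basic bound 2d}.

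I expect the only genuine subtlety to be the domain bookkeeping: extending $v'$ by zero so that the convolution is over all of $\R$, and verifying that restricting back to $[h,1]$ is legitimate because for $x\in[h,1]$ the support of $K(x-\cdot)$ sits inside $[0,1]$. Everything else is a short calculation, and the crux is recognizing the convolution structure, since the constant $3^{-1/2}$ emerges exactly as the $L^2$ norm of the triangular kernel. An alternative that avoids quoting Young's inequality would be to bound $|v-v_h|$ pointwise by $\int_{x-h}^x K(x-t)|v'(t)|\,\dd{t}$ and apply Minkowski's integral inequality, but the convolution viewpoint is the most transparent and delivers the sharp constant directly.
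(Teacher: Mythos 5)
Your proof is correct and follows essentially the same route as the paper's: both express $v-v_h$ as a convolution of $v'$ (extended by zero) with a triangular kernel supported on $[0,h]$, apply Young's inequality in the $L^2*L^1\to L^2$ form, and obtain the constant $3^{-1/2}h^{1/2}$ as the $L^2$ norm of that kernel, with the $L^\infty$ bound handled identically. The only cosmetic difference is that you derive the convolution identity forward via Fubini over the triangle $\{x-h\le y\le t\le x\}$, whereas the paper verifies the same identity by direct computation from the fundamental theorem of calculus.
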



\begin{proof}
We first verify that  $v_h - v$ can be written as a convolution for $x\in[h,1]$:
\begin{align}
v_h(x) - v(x) &= \int_0^1v'(y)\left(\dfrac{x-y}{h}-1\right)\1_{[0,h]}(x-y)\dd{y}. \label{eq: basic bound eq1 2d}
\end{align}
Since $v\in W^{1,1}(0,1)$, it is absolutely continuous, and by the fundamental theorem of calculus, we have:
\begin{align*}
\int_0^1v'(y)\left(\dfrac{x-y}{h}-1\right)\1_{[0,h]}(x-y)\dd{y} &= \int_{x-h}^xv'(y)\left(\dfrac{x-y}{h}\right)\dd{y} - \int_{x-h}^xv'(y)\dd{y} \\
&= -v(x-h) +  \dfrac{1}{h} \int_{x-h}^x v(y)\dd{y} - \left(v(x) - v(x-h)\right) \\
&= v_h(x) - v(x).
\end{align*}
It can be seen from Equation \eqref{eq: basic bound eq1 2d} that $v_h - v = K*g$ on $[h,1]$, where
\begin{align*}
K(x) := \left(\dfrac{x}{h} -1\right)\1_{[0,h]}(x), \quad g(x) := v'(x)\mathds{1}_{[0,1]}(x),
\end{align*}
and we have extended the functions to $\R$. Applying Young's inequality for convolutions, we obtain:
\begin{align*}
\|v-v_h\|_{L^2(h,1)} = \|K*g\|_{L^2(h,1)} \leq \|K*g\|_{L^2(\R)} \le \|K\|_{L^2(\R)}\|g\|_{L^1(\R)} = 3^{-1/2}h^{1/2}\|v'\|_{L^1(0,1)},
\end{align*}
where the last equality can be calculated directly. This shows Equation \eqref{eq: basic bound 2d}.

By Equation \eqref{eq: running average 2d}, for $x\in[h,1]$,
\begin{align*}
|v_h(x)| \le \dfrac{1}{h} \int_{x-h}^x |v(y)|\dd{y} \le \|v\|_{L^\infty(0,1)},
\end{align*}
which shows Equation \eqref{eq: vh inf bound 2d}.
\end{proof}


\begin{remark} \label{rem: basic bounds alternative 2d}
One could obtain an alternative bound for $\|v-v_h\|_{L^2(h,1)}$ by using the $L^p$ embedding theorem and Poincar\'{e}'s inequality in 1D:
\begin{align}
\|v-v_h\|_{L^2(h,1)} &\le \|v\|_{L^2(h,1)} + \|v_h\|_{L^2(h,1)} \le (1-h)^{1/2}\left(\|v\|_{L^\infty(h,1)} + \|v_h\|_{L^\infty(h,1)}\right) \nonumber \\
&\le 2(1-h)^{1/2}\|v\|_{L^\infty(0,1)} \le 2(1-h)^{1/2}\|v'\|_{L^1(0,1)}. \label{eq: basic bound alternative 2d}
\end{align}
Note that $3^{-1/2}h^{1/2}<2(1-h)^{1/2}$ for $h\in(0, 1/2]$, so that the estimate in Equation \eqref{eq: basic bound alternative 2d} is not as tight as the estimate in Equation \eqref{eq: basic bound 2d}. In addition, Equation \eqref{eq: basic bound alternative 2d} could lead to complications in later arguments.
\end{remark}


The following theorem shows a stability estimate for $W^{1,1}$ solutions to Problem \eqref{eq: g=Jv model} in terms of the data itself. 


\begin{theorem} \label{thm: PJ w11 bound 2d}
If $v\in W^{1,1}(0,1)$ with $\supp(v)\subset[0,1)$ and $\abelJ v = g$, we have:
\begin{align}
\|v\|_{L^2(0,1)} \le C\|v'\|_{L^1(0,1)}^{1/2}\|g\|_{L^2(0,1)}^{1/2}, \label{eq: PJ w11 bound 2d}
\end{align}
where $C$ is a constant independent of $v$.
\end{theorem}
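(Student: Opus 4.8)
The plan is to exploit the two estimates from Lemma~\ref{lem: basic bounds 2d} by writing $v = (v - v_h) + v_h$ and controlling each piece separately, then optimizing over the free parameter $h$. The first summand $v - v_h$ is already bounded by Equation~\eqref{eq: basic bound 2d} in terms of $h^{1/2}\|v'\|_{L^1(0,1)}$, so the real work is to bound $\|v_h\|_{L^2(h,1)}$ by the data $g = \abelJ v$. Here I would first split
\begin{align*}
\|v\|_{L^2(0,1)} \le \|v\|_{L^2(0,h)} + \|v - v_h\|_{L^2(h,1)} + \|v_h\|_{L^2(h,1)},
\end{align*}
and observe that on the small interval $[0,h]$ a crude bound (using boundedness of $v$, or absorbing it into the running-average term) contributes at most something of order $h^{1/2}\|v'\|_{L^1(0,1)}$ as well, so it does not dominate.

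The heart of the argument is estimating $\|v_h\|_{L^2(h,1)}$ in terms of $\|g\|_{L^2(0,1)}$. The idea I would pursue is that the running average $v_h(x) = \frac{1}{h}\int_{x-h}^x v(y)\,\dd{y}$ can be recovered from the data $g$ through a bounded operation, because $g(x) = \frac{1}{\sqrt{\pi}}\int_x^1 \frac{v(r)}{\sqrt{r-x}}\,\dd{r}$ encodes a smoothed (half-order integrated) version of $v$. Concretely, I would look for a kernel representation
\begin{align*}
v_h(x) = \int_{x-h}^1 v(r)\,\psi_h(r,x)\,\dd{r}
\end{align*}
and try to write $\psi_h$ as a superposition of the kernels $\frac{1}{\sqrt{r-\xi}}\1_{\{r>\xi\}}$ appearing in $\abelJ$, i.e. express the indicator/averaging kernel $\frac{1}{h}\1_{[x-h,x]}(r)$ as $\int_{x-h}^{x}\!\frac{c(\xi)}{\sqrt{r-\xi}}\,\dd{\xi}$ for a suitable density $c$. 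Since differencing two values of the half-integral $\abelJ v$ and then integrating in the free variable should reproduce a local average of $v$, this amounts to solving an Abel-type integral equation for the representer, whose solution will be explicit (of the form $c(\xi)\sim (x-\xi)^{-1/2}$ up to constants). Once $v_h$ is written as a linear functional of $g$ with an explicit integration kernel, Cauchy--Schwarz (or Young's inequality for convolutions, exactly as in the lemma) gives
\begin{align*}
\|v_h\|_{L^2(h,1)} \le C\,h^{-1/2}\,\|g\|_{L^2(0,1)},
\end{align*}
where the $h^{-1/2}$ blow-up reflects the ill-posedness of recovering $v$ from its half-integral at scale $h$.

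Combining the two bounds yields
\begin{align*}
\|v\|_{L^2(0,1)} \le C_1\,h^{1/2}\,\|v'\|_{L^1(0,1)} + C_2\,h^{-1/2}\,\|g\|_{L^2(0,1)},
\end{align*}
and minimizing the right-hand side over $h\in(0,1/2]$ by choosing $h \sim \|g\|_{L^2(0,1)}/\|v'\|_{L^1(0,1)}$ balances the two terms and produces exactly the claimed geometric-mean bound
\begin{align*}
\|v\|_{L^2(0,1)} \le C\,\|v'\|_{L^1(0,1)}^{1/2}\,\|g\|_{L^2(0,1)}^{1/2}.
\end{align*}
One technical point is that the optimal $h$ must be checked to lie in $(0,1/2]$; if the ratio exceeds $1/2$ one sets $h = 1/2$ and the bound holds trivially with an adjusted constant (this is presumably why the domain was restricted to $h\in(0,1/2]$ and why the alternative, looser estimate in Remark~\ref{rem: basic bounds alternative 2d} was flagged as causing complications).

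I expect the main obstacle to be the second step: constructing the explicit kernel that expresses $v_h$ as a bounded linear functional of $g$ and extracting the sharp $h^{-1/2}$ dependence. This requires inverting the half-order integral locally, and one must be careful that the representer density is $L^2$-integrable with the correct scaling in $h$ so that Cauchy--Schwarz or Young's inequality closes the estimate; a naive inversion would reintroduce the derivative of $g$ (which is only a measure, not in $L^2$) and destroy the bound, so the averaging by $v_h$ is essential precisely to avoid differentiating the data.
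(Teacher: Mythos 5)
Your proposal follows essentially the same route as the paper's proof: the same three-term splitting $\|v\|_{L^2(0,1)} \le \|v\|_{L^2(0,h)} + \|v-v_h\|_{L^2(h,1)} + \|v_h\|_{L^2(h,1)}$, the bound \eqref{eq: basic bound 2d} for the middle term, a Poincar\'{e}-type bound $h^{1/2}\|v'\|_{L^1(0,1)}$ for the first term, a kernel representation of $v_h$ in terms of $g$ giving $\|v_h\|_{L^2(h,1)} \le C h^{-1/2}\|g\|_{L^2(0,1)}$, and optimization over $h\in(0,1/2]$. Your ``representer'' step, once carried out, produces exactly the paper's identity: solving the Abel equation $\frac1h\1_{[x-h,x]}(r) = \int c_x(\xi)\,(r-\xi)^{-1/2}\1_{\{r>\xi\}}\dd{\xi}$ gives $c_x(\xi) = \frac{1}{\pi h}\bigl[\1_{\{\xi>x-h\}}(\xi-x+h)^{-1/2} - \1_{\{\xi>x\}}(\xi-x)^{-1/2}\bigr]$, whence $h v_h(x) = \abelJ g(x-h) - \abelJ g(x)$, which is precisely Equation \eqref{eq: PJ-w11 eq2 2d}; the paper reaches the same formula in the opposite (and cleaner) direction via the semigroup identity $\abelJ^2 v(x) = \int_x^1 v(y)\dd{y}$ (Equation \eqref{eq: PJ-w11 eq1 2d}, Fubini plus \eqref{eq: gamma identity}). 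One correction to your heuristic: the representer is \emph{not} supported in $[x-h,x]$ and is not of the form $(x-\xi)^{-1/2}$ there; it is a difference of two one-sided square-root kernels whose tail extends over all $\xi>x$. The estimate still closes because that tail difference has $L^1$ mass of order $h^{1/2}$ in each variable, which is what the Schur-type bound \eqref{eq: PJ-w11 eq3 2d} exploits, so this imprecision is harmless.

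The genuine gap is your treatment of the constraint $h^*\in(0,1/2]$. You assert that if the unconstrained minimizer exceeds $1/2$, one ``sets $h=1/2$ and the bound holds trivially with an adjusted constant.'' It does not: in that regime the case assumption reads $\|g\|_{L^2(0,1)} \gtrsim \|v'\|_{L^1(0,1)}$, and at $h=1/2$ the dominant term is a constant multiple of $\|g\|_{L^2(0,1)}$; to convert this into $\|v'\|_{L^1(0,1)}^{1/2}\|g\|_{L^2(0,1)}^{1/2}$ you need the \emph{reverse} inequality $\|g\|_{L^2(0,1)} \le C\|v'\|_{L^1(0,1)}$, which your case assumption supplies in the wrong direction. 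The paper fills exactly this hole: integrating by parts (using $\supp(v)\subset[0,1)$) writes $g = K*v'$ with $K(x) = -\frac{2}{\sqrt{\pi}}\sqrt{|x|}\,\1_{[0,1]}(-x)$, and Young's inequality yields $\|g\|_{L^2(0,1)} \le \sqrt{2/\pi}\,\|v'\|_{L^1(0,1)}$ (Equation \eqref{eq: PJ-w11 eq9 2d}), which shows that $h^*\le 1/2$ always, so the fallback case never occurs. Adding this data-side estimate makes your argument complete.
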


\begin{proof}
We first note that for each $x\in[0,1]$, Fubini's theorem implies that
\begin{align}
\abelJ^2 v(x) &= \dfrac{1}{\sqrt{\pi}}\int_x^1\dfrac{\abelJ v(s)}{\sqrt{s-x}} \dd{s} = \dfrac{1}{\pi}\int_x^1 \dfrac{1}{\sqrt{s-x}} \int_s^1 \dfrac{v(y)}{\sqrt{y-s}} \dd{y} \dd{s} \nonumber \\
&= \dfrac{1}{\pi}\int_x^1 v(y) \int_x^y \dfrac{1}{\sqrt{s-x}\sqrt{y-s}} \dd{s} \dd{y} = \int_x^1v(y)\dd{y}, \label{eq: PJ-w11 eq1 2d}
\end{align}
where the last step follows from Equation \eqref{eq: gamma identity}.  This is the key to the argument, specifically, that two applications of $\abelJ$ is the same as integration. Then from Equations \eqref{eq: running average 2d} and \eqref{eq: PJ-w11 eq1 2d}, we have, for $x\in[h,1]$,
\begin{align}
\sqrt{\pi}hv_h(x) &= \sqrt{\pi}\left(\int_{x-h}^1 v(y)\dd{y} - \int_x^1 v(y)\dd{y}\right) = \sqrt{\pi}\left(\abelJ g(x-h) - \abelJ g(x) \right) \nonumber \\
&= \int_{x-h}^1\dfrac{g(y)}{\sqrt{y-(x-h)}} \dd{y} - \int_x^1\dfrac{g(y)}{\sqrt{y-x}} \dd{y} \nonumber \\
&= \int_{x-h}^x\dfrac{g(y)}{\sqrt{y-(x-h)}} \dd{y} + \int_x^1g(y) \left[\dfrac{1}{\sqrt{y-(x-h)}} - \dfrac{1}{\sqrt{y-x}}\right] \dd{y} \nonumber \\
&= \int_0^1 g(y) K_1(x,y) \dd{y} + \int_0^1g(y)K_2(x,y) \dd{y}  \nonumber\\
&=: F_1(x) + F_2(x), \label{eq: PJ-w11 eq2 2d}
\end{align}
where we extend the kernels to the entire domain and define them by:
\begin{align*}
K_1(x,y):= \dfrac{\1_{[0,h]}(x-y)}{\sqrt{h-(x-y)}}, \quad K_2(x,y):= \dfrac{\1_{[0,h]}(y-x)}{\sqrt{y-(x-h)}} - \dfrac{\1_{[0,h]}(y-x)}{\sqrt{y-x}}. 
\end{align*}
Note that the support set $[0,h]$ in $K_2$ is a by-product of the assumption $x\in[h,1]$. Since the kernels are in $L^1$:
\begin{align*}
\int_0^1 |K_1(x, y)|\dd{y} &= \int_0^1 |K_1(x, y)|\dd{x} = 2h^{1/2}, \\
\& \quad \int_0^1 |K_2(x, y)|\dd{y} &= \int_0^1 |K_2(x, y)|\dd{x} = 2\left(2-\sqrt{2}\right)h^{1/2},
\end{align*}
and by, for example, Theorem 6.18 in \cite{Folland99}, we have $L^2$ control over each term in Equation \eqref{eq: PJ-w11 eq2 2d}:
\begin{align}
\|F_1\|_{L^2(h,1)} \le 2h^{1/2} \|g\|_{L^2(0,1)}, \quad \|F_2\|_{L^2(h,1)} \le 2\left(2-\sqrt{2}\right)h^{1/2} \|g\|_{L^2(0,1)}. \label{eq: PJ-w11 eq3 2d}
\end{align}
Combining Equations \eqref{eq: PJ-w11 eq2 2d}-\eqref{eq: PJ-w11 eq3 2d}, we obtain:
\begin{align}
\|v_h\|_{L^2(h,1)} \le 2\left(3-\sqrt{2}\right)\pi^{-1/2}h^{-1/2} \|g\|_{L^2(0,1)}. \label{eq: PJ-w11 eq4 2d}
\end{align}
On the other hand, using the $L^p$ interpolation theorem and Poincar\'{e}'s inequality in 1D, we obtain:
\begin{align}
\|v\|_{L^2(0,h)} &\le h^{1/2}\|v\|_{L^\infty(0,h)} \le h^{1/2}\|v\|_{L^\infty(0,1)} \le h^{1/2}\|v'\|_{L^1(0,1)}. \label{eq: PJ-w11 eq5 2d}
\end{align}
Thus, by the triangle inequality and Equations \eqref{eq: basic bound 2d} and \eqref{eq: PJ-w11 eq4 2d}-\eqref{eq: PJ-w11 eq5 2d}, we have:
\begin{align}
\|v\|_{L^2(0,1)} &\le \|v\|_{L^2(0,h)} + \|v-v_h\|_{L^2(h,1)} + \|v_h\|_{L^2(h,1)} \nonumber \\
&\le \left(1+1/\sqrt{3}\right)h^{1/2}\|v'\|_{L^1(0,1)} + 2\left(3-\sqrt{2}\right)\pi^{-1/2}h^{-1/2}\|g\|_{L^2(0,1)} \nonumber \\
&\le 2\left(1+1/\sqrt{3}\right)h^{1/2}\|v'\|_{L^1(0,1)} + 2\left(3-\sqrt{2}\right)\pi^{-1/2}h^{-1/2}\|g\|_{L^2(0,1)}, \label{eq: PJ-w11 eq6 2d}
\end{align}
where the last step follows from slightly extending the upper bound, since we will optimize Equation \eqref{eq: PJ-w11 eq6 2d} with the constraint $h\in(0, 1/2]$. By direct calculation, the value $h^*$ that minimizes the right-hand side of Equation \eqref{eq: PJ-w11 eq6 2d} is given by:
\begin{align}
h^* = \dfrac{\left(3-\sqrt{2}\right)\|g\|_{L^2(0,1)}}{\sqrt{\pi}\left(1+1/\sqrt{3}\right)\|v'\|_{L^1(0,1)}}. \label{eq: PJ-w11 eq7 2d}
\end{align}
We can verify that the minimizer, which depends on the factor $\|g\|_{L^2(0,1)}\|v'\|_{L^1(0,1)}^{-1}$, satisfies the constraint $h^*\in(0, 1/2]$ as follows. By Equation \eqref{eq: def abelJ 2d} and integrating by parts, we have, for $x\in[0,1]$,
\begin{align}
g(x) = \dfrac{1}{\sqrt{\pi}} \int_x^1 \dfrac{v(y)}{\sqrt{y-x}}\dd{y} &= - \dfrac{2}{\sqrt{\pi}} \int_x^1 v'(y)\sqrt{|x-y|}\dd{y}, \label{eq: PJ-w11 eq8 2d}
\end{align}
since the assumption is that $\supp(v)\subset[0,1)$. To bound $g$ in $L^2(0,1)$ by $\|v'\|_{L^1(0,1)}$, we apply Young's inequality for convolutions with
\begin{align}
K(x) := -\dfrac{2}{\sqrt{\pi}}\sqrt{|x|}\mathds{1}_{[0,1]}(-x), \quad f(x) := v'(x)\mathds{1}_{[0,1]}(x). \label{eq: PJ-w11 kernel g}
\end{align}
Thus $g = K*f$ on $[0,1]$ and, with $K$ and $f$ extended to $\R$,
\begin{align}
\|g\|_{L^2(0,1)} = \|K*f\|_{L^2(0,1)} \le \|K*f\|_{L^2(\R)} \le \|K\|_{L^2(\R)}\|f\|_{L^1(\R)} = \sqrt{2/\pi} \|v'\|_{L^1(0,1)}, \label{eq: PJ-w11 eq9 2d}
\end{align}
where the last step can be calculated directly. Therefore, combining Equations \eqref{eq: PJ-w11 eq7 2d} and \eqref{eq: PJ-w11 eq9 2d} yields:
\begin{align*}
h^* = \dfrac{\left(3-\sqrt{2}\right)\|g\|_{L^2(0,1)}}{\sqrt{\pi}\left(1+1/\sqrt{3}\right)\|v'\|_{L^1(0,1)}} \le \dfrac{3\sqrt{2}-2}{\pi\left(1+1/\sqrt{3}\right)} \le \dfrac{1}{2}. 
\end{align*}
Therefore, by optimizing Equation \eqref{eq: PJ-w11 eq6 2d} over $h$, we obtain:
\begin{align*}
\|v\|_{L^2(0,1)} \le 2\pi^{-1/4}\left(1+1/\sqrt{3}\right)^{1/2}\left(3-\sqrt{2}\right)^{1/2}\|v'\|_{L^1(0,1)}^{1/2}\|g\|_{L^2(0,1)}^{1/2},
\end{align*}
which gives Equation \eqref{eq: PJ w11 bound 2d}. 
\end{proof}


The utility of the stability bound in Equation \eqref{eq: PJ w11 bound 2d} is that the right-hand side of the inequality is of the form of a product-bound depending on the data, which can be calculated in practice. As an example, consider the function $v_k(r) = \chi(kr)$, $k\ge1$, where $\chi$ is the indicator function of the interval $[0,1]$:
\begin{align*}
\chi(x) = 
\begin{cases}
1, \quad \text{if } x\in[0,1], \\
0, \quad \text{otherwise}.
\end{cases}
\end{align*}
This is a prototypical example related to image recovery. Define $g_k:=\abelJ v_k$. It can be shown, by Equation \eqref{eq: def abelJ 2d}, that:
\begin{align*}
g_k(x) = 2\pi^{-1/2}(1/k-x)^{1/2}\chi(kx).
\end{align*}
For $k\ge1$, it is easy to show that $\|v'_k\|_{L^1(0,1)} = 1$, and
\begin{align*}
\|v_k\|_{L^1(0,1)} &= k^{-1}, \quad \|g_k\|_{L^1(0,1)} = \dfrac{4}{3\sqrt{\pi}}k^{-3/2}, \\
\|v_k\|_{L^2(0,1)} &= k^{-1/2}, \quad \|g_k\|_{L^2(0,1)} = \dfrac{\sqrt{2}}{\sqrt{\pi}}k^{-1}. 
\end{align*}
If the bound on $\|v\|_{L^2(0,1)}$ is in the sum-form:
\begin{align}
\|v\|_{L^2(0,1)} &\le C_1\|v'\|_{L^1(0,1)} + C_2\|g\|_{L^2(0,1)} \label{eq: PJ w11 bound 2d alternative}
\end{align}
with constants $C_1$ and $C_2$, then:
\begin{align*}
\lim_{k\to\infty}\|v_k\|_{L^2(0,1)} \le C_1\lim_{k\to\infty}\|v'_k\|_{L^1(0,1)} + C_2\lim_{k\to\infty}\|g_k\|_{L^2(0,1)} = C_1,
\end{align*}
which is suboptimal since $\lim_{k\to\infty}\|v_k\|_{L^2(0,1)}=0$. On the other hand, Equation \eqref{eq: PJ w11 bound 2d} yields $\|v_k\|_{L^2(0,1)} \leq C k^{-1/2}$, which obtains the correct decay rate for this example. Therefore, in terms of applicability, a sum-bound in the form of Equation \eqref{eq: PJ w11 bound 2d alternative} is not as desired as our product-bounds like Equation \eqref{eq: PJ w11 bound 2d}, since the right-hand side of Equation \eqref{eq: PJ w11 bound 2d alternative} is not necessarily made arbitrarily small when $\|g\|_{L^2(0,1)}$ is made arbitrarily small. 

In \cite{Gorenflo91}, the authors proved that if $v\in W^{1,1}$ or if $v\in W^{1,2}$, then
\begin{align}
\|v\|_{L^1(0,1)} &\leq C_1\|v'\|_{L^1(0,1)}^{1/3}\|g\|_{L^1(0,1)}^{2/3} + C_2\|g\|_{L^1(0,1)},\label{eq: RG-SV 8.3.1 L1}\\
\|v\|_{L^2(0,1)} &\leq C_1\|v'\|_{L^2(0,1)}^{1/3}\|g\|_{L^2(0,1)}^{2/3} + C_2\|g\|_{L^2(0,1)},\label{eq: RG-SV 8.3.1 L2}
\end{align}
respectively.  In the $L^2$ case, Theorem \ref{thm: PJ w11 bound 2d} improves the results of Theorem 8.3.1 in \cite{Gorenflo91}; since we provide $L^2$ control for $v\in W^{1,1}$ rather than requiring $v\in W^{1,2}$, Equation \eqref{eq: PJ w11 bound 2d} is more applicable to Problem \eqref{eq: g=Jv model}. One could argue that an alternative $L^2$- bound could be obtained from Equation \eqref{eq: RG-SV 8.3.1 L2} using the $L^p$ interpolation theorem:
\begin{align}
\|v\|_{L^2(0,1)} \le \|v\|_{L^1(0,1)}^{1/2}\|v\|_{L^\infty(0,1)}^{1/2} &\le \left(C_1\|v'\|_{L^1(0,1)}^{1/3}\|g\|_{L^1(0,1)}^{2/3} + C_1\|g\|_{L^1(0,1)}\right)^{1/2} \|v\|_{L^\infty(0,1)}^{1/2} \nonumber \\
&\le \left(C_1\|v'\|_{L^1(0,1)}^{1/3}\|g\|_{L^2(0,1)}^{2/3} + C_2\|g\|_{L^2(0,1)}\right)^{1/2} \|v\|_{L^\infty(0,1)}^{1/2}. \label{eq: RG-SV 8.3.1 L2+BV}
\end{align}
Comparing the various bounds yields (with frequent redefinition of the constants $C$, $C_1$, and $C_2$):
\begin{enumerate}[(i)]
\item an $L^1$-bound derived via our approach (see Appendix \ref{sec: L1 bound}, Equation \eqref{eq: PJ w11 bound 2d l1}):
\begin{align*}
\|v_k\|_{L^1(0,1)} &\leq C\|v_k'\|_{L^1(0,1)}^{1/3}\|g_k\|_{L^1(0,1)}^{2/3} \leq C k^{-1},
\end{align*}
which achieves the correct decay rate for this example, \textit{i.e.} $\|v_k\|_{L^1(0,1)} = k^{-1}$;
\item the $L^1$-bound in \cite{Gorenflo91} (Equation \eqref{eq: RG-SV 8.3.1 L1}):
\begin{align*}
\|v_k\|_{L^1(0,1)} &\leq C_1\|v_k'\|_{L^1(0,1)}^{1/3}\|g_k\|_{L^1(0,1)}^{2/3} + C_2\|g_k\|_{L^1(0,1)} \leq C( k^{-1} + k^{-3/2}),
\end{align*}
which achieves the correct decay rate for this example only when the transient term $k^{-3/2}$ decays;
\item our $L^2$-bound (Equation \eqref{eq: PJ w11 bound 2d}):
\begin{align*}
\|v_k\|^2_{L^2(0,1)} &\le C\|v_k'\|_{L^1(0,1)}\|g_k\|_{L^2(0,1)} \leq C k^{-1},
\end{align*}
which achieves the correct decay rate for this example, \textit{i.e.} $\|v_k\|_{L^2(0,1)} = k^{-1/2}$;
\item an $L^2$-bound from \cite{Gorenflo91} using the interpolation theorem (Equation \eqref{eq: RG-SV 8.3.1 L2+BV}):
\begin{align*}
\|v_k\|_{L^2(0,1)}^2 &\le \left(C_1\|v_k'\|_{L^1(0,1)}^{1/3}\|g_k\|_{L^2(0,1)}^{2/3} + C_2\|g_k\|_{L^2(0,1)}\right)\|v_k\|_{L^\infty(0,1)} \leq C (k^{-2/3} + k^{-1}),
\end{align*}
which does not achieves the correct decay rate for this example.
\end{enumerate}
We see that our $L^1$ and $L^2$ bounds achieve the correct decay rate for this example, and thus are tight in some sense. The error bounds in \cite{Gorenflo91} are suboptimal in $L^2$ and contains transient terms in $L^1$.


We now extend the result in Theorem \ref{thm: PJ w11 bound 2d} to obtain an $L^2$-stability estimate for $BV$ solutions to Problem \eqref{eq: g=Jv model} via a density argument.


\begin{theorem} \label{thm: PJ bv bound 2d}
If $v\in BV(0,1)$ with $\supp(v)\subset[0,1)$ and $\abelJ v = g$, we have:
\begin{align}
\|v\|_{L^2(0,1)} \le C\|v\|_{TV(0,1)}^{1/2}\|g\|_{L^2(0,1)}^{1/2}, \label{eq: PJ bv bound 2d}
\end{align}
where $C$ is a constant independent of $v$.
\end{theorem}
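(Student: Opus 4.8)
The plan is to prove Equation \eqref{eq: PJ bv bound 2d} by a density argument, transferring the $W^{1,1}$ estimate of Theorem \ref{thm: PJ w11 bound 2d} to the $BV$ setting. First I would invoke the standard strict (intermediate) approximation of $BV$ functions by smooth ones: since $\supp(v)\subset[0,1)$, there is $\delta>0$ with $v\equiv0$ on $(1-\delta,1)$, so mollifying $v$ at a scale $\epsilon_n<\delta/2$ (extending $v$ across the left endpoint $x=0$ so as not to manufacture spurious variation there) produces $v_n\in W^{1,1}(0,1)$ with $\supp(v_n)\subset[0,1)$, $v_n\to v$ in $L^1(0,1)$, and $\|v_n'\|_{L^1(0,1)}\to\|v\|_{TV(0,1)}$. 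Each $v_n$ then satisfies the hypotheses of Theorem \ref{thm: PJ w11 bound 2d}, so with $g_n:=\abelJ v_n$ we have $\|v_n\|_{L^2(0,1)}\le C\|v_n'\|_{L^1(0,1)}^{1/2}\|g_n\|_{L^2(0,1)}^{1/2}$. The goal is to pass to the limit $n\to\infty$ in this inequality.

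To do so I would need three convergences: $\|v_n\|_{L^2}\to\|v\|_{L^2}$, $\|v_n'\|_{L^1}\to\|v\|_{TV}$, and $\|g_n\|_{L^2}\to\|g\|_{L^2}$. The second is exactly the strict convergence arranged above, and $v\in BV\subset L^2$ makes the target finite. The first and third both reduce to upgrading the $L^1$-convergence of $v_n$ to $L^2$-convergence, which is the crux of the argument and the step I expect to be the main obstacle. The difficulty is that $\abelJ$ acts by convolution against the truncated kernel $\tfrac{1}{\sqrt{\pi}}s^{-1/2}\mathds{1}_{(0,1)}(s)$, which lies in $L^1$ but not in $L^2$; hence by Young's inequality $\abelJ$ is bounded $L^2\to L^2$ but is \emph{not} bounded $L^1\to L^2$, so mere $L^1$-convergence $v_n\to v$ does not by itself force $g_n\to g$ in $L^2$.

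I would resolve this using the one-dimensional embedding $BV(0,1)\hookrightarrow L^\infty(0,1)$. Because $v_n\to v$ strictly, the quantities $\|v_n\|_{L^1(0,1)}$ and $\|v_n'\|_{L^1(0,1)}$ are uniformly bounded, so $\{v_n\}$ is bounded in $L^\infty(0,1)$ by some $M$. Interpolating, $\|v_n-v\|_{L^2(0,1)}^2\le\|v_n-v\|_{L^1(0,1)}\,\|v_n-v\|_{L^\infty(0,1)}\le 2M\,\|v_n-v\|_{L^1(0,1)}\to0$, so $v_n\to v$ in $L^2(0,1)$, which gives the first convergence. Applying the $L^2\to L^2$ bound for $\abelJ$ to $g_n-g=\abelJ(v_n-v)$ then yields $\|g_n-g\|_{L^2}\le\|\abelJ\|\,\|v_n-v\|_{L^2}\to0$, which is the third convergence.

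Combining the three limits in the inequality for $v_n$ gives $\|v\|_{L^2(0,1)}=\lim_n\|v_n\|_{L^2(0,1)}\le C\lim_n\|v_n'\|_{L^1(0,1)}^{1/2}\|g_n\|_{L^2(0,1)}^{1/2}=C\|v\|_{TV(0,1)}^{1/2}\|g\|_{L^2(0,1)}^{1/2}$, which is Equation \eqref{eq: PJ bv bound 2d}. The only point requiring care beyond the obstacle above is the construction of the approximating sequence that simultaneously achieves strict convergence of the total variation and preserves the right-endpoint support condition $\supp(v_n)\subset[0,1)$; I would handle the left endpoint $x=0$ (where $v$ need not vanish) by a constant or reflected extension before mollifying, so that no additional variation is introduced near $0$.
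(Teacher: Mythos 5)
Your proposal is correct and follows essentially the same route as the paper: a density argument that feeds smooth approximants into Theorem \ref{thm: PJ w11 bound 2d}, with the key step --- upgrading $L^1$-convergence of the approximants to $L^2$-convergence via a uniform $L^\infty$ bound (the 1D $BV$ embedding / Poincar\'{e}) so that the $L^2\to L^2$ boundedness of $\abelJ$ gives $\|g_n-g\|_{L^2}\to0$ --- identical to the paper's interpolation argument in Equations \eqref{eq: PJ-bv eq5 2d}--\eqref{eq: PJ-bv eq6 2d}. The only cosmetic differences are that the paper invokes the smooth approximation theorem and closes with Fatou's lemma where you establish $\|v_n\|_{L^2}\to\|v\|_{L^2}$ directly, and that you handle the endpoint support condition $\supp(v_n)\subset[0,1)$ more explicitly than the paper does.
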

\begin{proof}
By the smooth approximation theorem for $BV$ functions, there exists a sequence of functions $\{v_k\}_{k=1}^\infty\subset W^{1,1}(0,1)\cap C^\infty(0,1)=BV(0,1)\cap C^\infty(0,1)$ such that: 
\begin{subequations} \label{eq: PJ-bv eq1 2d}
\begin{align}
&\|v_k-v\|_{L^1(0,1)}\to0 \quad {\rm as}\,\,k\to\infty, \label{eq: PJ-bv eq1a 2d} \\
& v_k\to v \quad {\rm a.e.\,\,as}\,\,k\to\infty, \label{eq: PJ-bv eq1b 2d} \\
{\rm and}\quad &\|v_k\|_{TV(0,1)}\to\|v\|_{TV(0,1)} \quad {\rm as}\,\,k\to\infty. \label{eq: PJ-bv eq1c 2d}
\end{align}
\end{subequations}
Define $g_k:=\abelJ v_k$. By Theorem \ref{thm: PJ w11 bound 2d},
\begin{align}
\|v_k\|_{L^2(0,1)} \le C\|v_k'\|_{L^1(0,1)}^{1/2}\|g_k\|_{L^2(0,1)}^{1/2}, \label{eq: PJ-bv eq2 2d}
\end{align}
where $C$ is a constant independent of the choice of the approximating sequence. Since $\{v_k\}_{k=1}^\infty\subset C^1(0,1)$, condition \eqref{eq: PJ-bv eq1c 2d} implies that:
\begin{align}
\|v_k'\|_{L^1(0,1)}\to\|v\|_{TV(0,1)} \quad {\rm as}\,\,k\to\infty. \label{eq: PJ-bv eq3 2d}
\end{align}
We now show that 
\begin{align}
\|g_k\|_{L^2(0,1)}\to\|g\|_{L^2(0,1)} \quad {\rm as}\,\,k\to\infty\label{eq: PJ-bv eq4 2d}
\end{align}
by proving $\|g_k-g\|_{L^2(0,1)}\to0$ as $k\to\infty$. Choosing $p=2$ and $\epsilon=1/2$ in Theorem \ref{thm: J lp continuity 2d} so that $s=2$, and applying the $L^p$ interpolation theorem, we have:
\begin{align}
\|g_k-g\|_{L^2(0,1)} &= \|\abelJ(v_k-v)\|_{L^2(0,1)} \le \dfrac{2}{\sqrt{\pi}}\|v_k-v\|_{L^2(0,1)} \nonumber \\
& \le \dfrac{2}{\sqrt{\pi}}\|v_k-v\|_{L^1(0,1)}^{1/2}\|v_k-v\|_{L^\infty(0,1)}^{1/2}. \label{eq: PJ-bv eq5 2d}
\end{align}
By Poincar\'{e}'s inequality in 1D: 
\begin{align}
\|v_k-v\|_{L^\infty(0,1)} &\le \|v_k\|_{L^\infty(0,1)} + \|v\|_{L^\infty(0,1)} \le 2\max\{\|v_k\|_{L^\infty(0,1)},\|v\|_{L^\infty(0,1)}\} \nonumber \\
&\le 2\max\{\|v_k\|_{TV(0,1)},\|v\|_{TV(0,1)}\} \le 4\|v\|_{TV(0,1)}, \label{eq: PJ-bv eq6 2d}
\end{align}
where the last inequality holds by condition \eqref{eq: PJ-bv eq1c 2d} for all $k$ sufficiently large. Thus, Equations \eqref{eq: PJ-bv eq5 2d}-\eqref{eq: PJ-bv eq6 2d} together with condition \eqref{eq: PJ-bv eq1a 2d} imply that:
\begin{align*}
\|g_k-g\|_{L^2(0,1)} \le \dfrac{4}{\sqrt{\pi}}\|v\|_{TV(0,1)}^{1/2}\|v_k-v\|_{L^1(0,1)}^{1/2} \to 0
\end{align*}
as $k\to\infty$, which yields Equation \eqref{eq: PJ-bv eq4 2d}. Therefore, by Equations \eqref{eq: PJ-bv eq2 2d}-\eqref{eq: PJ-bv eq4 2d}:
\begin{align*}
\|v\|_{L^2(0,1)} \le \liminf_{k\to\infty}\|v_k\|_{L^2(0,1)} \le C\lim_{k\to\infty}\|v_k'\|_{L^1(0,1)}^{1/2}\|g_k\|_{L^2(0,1)}^{1/2} = C\|v\|_{TV(0,1)}^{1/2}\|g\|_{L^2(0,1)}^{1/2},
\end{align*}
where the first step follows from condition \eqref{eq: PJ-bv eq1b 2d} and Fatou's Lemma. 
\end{proof}


Using the same density argument, one arrives at the following theorem from Equations \eqref{eq: PJ bv bound 2d} and \eqref{eq: r-bound to xy-bound 2d}, which provides an $L^2$-stability estimate for $BV$ solutions to Problem \eqref{eq: f=Au model}.


\begin{theorem} \label{thm: PA bv bound 2d}
Let $u:B(0,1)\subset\R^2\to\R$ be an axisymmetric function such that, as a function of $r$, $u\in BV(0,1)$ and $\supp(u)\subset[0,1)$. If $\abel u = f$, we have:
\begin{align*}
\|u\|_{L^2(B(0,1))} \le C\|u\|_{TV(0,1)}^{1/2}\|f\|_{L^2(0,1)}^{1/2}, 
\end{align*}
where $C$ is a constant independent of $u$,
\begin{align*}
\|u\|_{L^2(B(0,1))} &:= \left(\iint_{B(0,1)} |u(x,y)|^2\dd{x}\dd{y}\right)^{1/2},
\end{align*}
and $\|u\|_{TV(0,1)}$ is defined by Equation \eqref{eq: def TV norm 2d}.
\end{theorem}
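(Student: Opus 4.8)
The plan is to reduce the statement to the already-established $\abelJ$-bound of Theorem \ref{thm: PJ bv bound 2d} through the radial change of variables recorded in Equation \eqref{eq: A-J connection 2d}. First I would set $v(s) := u(\sqrt{s})$ for $s\in[0,1]$, so that $v(r^2) = u(r)$ exactly as in the derivation of Equation \eqref{eq: A-J connection 2d}. Because $s\mapsto\sqrt{s}$ is a monotone homeomorphism of $[0,1]$ onto itself, $v\in BV(0,1)$ with $\supp(v)\subset[0,1)$, and since the total variation is invariant under a monotone reparametrization, this yields $\|v\|_{TV(0,1)} = \|u\|_{TV(0,1)}$. Moreover, writing $g:=\abelJ v$, Equation \eqref{eq: A-J connection 2d} gives $f(x) = \abel u(x) = \sqrt{\pi}\,g(x^2)$, so the hypothesis $\abel u = f$ is precisely the statement $\abelJ v = g$, with the two data functions related by the substitution $s=x^2$.

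Next I would convert the relevant norms between the variable $s$ and the original variables. Using the change of variables $s=r^2$ (so $\dd{s} = 2r\,\dd{r}$), I would establish the norm identity of Equation \eqref{eq: r-bound to xy-bound 2d}, namely
\[
\|u\|_{L^2(B(0,1))}^2 = 2\pi\int_0^1 |u(r)|^2\, r\,\dd{r} = \pi\,\|v\|_{L^2(0,1)}^2,
\]
so that $\|u\|_{L^2(B(0,1))} = \sqrt{\pi}\,\|v\|_{L^2(0,1)}$. The same substitution applied to $g(s) = f(\sqrt{s})/\sqrt{\pi}$ gives $\|g\|_{L^2(0,1)}^2 = (2/\pi)\int_0^1 |f(x)|^2\, x\,\dd{x} \le (2/\pi)\,\|f\|_{L^2(0,1)}^2$, where the inequality uses that the weight satisfies $x\le1$ on $[0,1]$; hence $\|g\|_{L^2(0,1)} \le \sqrt{2/\pi}\,\|f\|_{L^2(0,1)}$. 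I would note that it is exactly the boundedness of this weight by $1$ that lets the weighted data norm be replaced by the plain $\|f\|_{L^2(0,1)}$ appearing in the conclusion.

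Finally I would apply Theorem \ref{thm: PJ bv bound 2d} to $v$ and $g$ and substitute the three relations above:
\[
\|u\|_{L^2(B(0,1))} = \sqrt{\pi}\,\|v\|_{L^2(0,1)} \le \sqrt{\pi}\,C\,\|v\|_{TV(0,1)}^{1/2}\|g\|_{L^2(0,1)}^{1/2} \le C'\,\|u\|_{TV(0,1)}^{1/2}\|f\|_{L^2(0,1)}^{1/2},
\]
absorbing all constants into $C'$. The step I expect to be the main obstacle is the first one: justifying rigorously that the nonlinear substitution $r\mapsto r^2$ preserves membership in $BV$ and the value of the total variation, since the map degenerates at the origin. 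This is where the density argument enters, mirroring the proof of Theorem \ref{thm: PJ bv bound 2d}: one approximates $u$ by functions in $W^{1,1}(0,1)\cap C^\infty(0,1)$, where the chain rule applies cleanly and the identities $\|v\|_{TV(0,1)}=\|u\|_{TV(0,1)}$ and Equation \eqref{eq: r-bound to xy-bound 2d} can be verified directly, and then passes to the limit using the $TV$-convergence and $L^2$-continuity properties already exploited in Theorem \ref{thm: PJ bv bound 2d}. The remaining computations are routine changes of variables and constant bookkeeping.
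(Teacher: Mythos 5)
Your proposal is correct and follows essentially the same route as the paper: the paper obtains this theorem by applying Theorem \ref{thm: PJ bv bound 2d} to $v$ defined by $v(r^2)=u(r)$ and converting norms via Equation \eqref{eq: r-bound to xy-bound 2d} (Proposition \ref{prop: A-J norm}), with exactly the density argument you outline handling the passage from smooth to $BV$ functions. The only discrepancy is cosmetic: your constant $\|g\|_{L^2(0,1)}^2\le (2/\pi)\|f\|_{L^2(0,1)}^2$ is in fact the correct one (the paper's stated factor $2$ drops a $1/\pi$), which is immaterial since all constants are absorbed into $C$.
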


This inequality controls the solution in the entire domain by information on its line-of-sight projections.


\subsection{$L^2$-Stability Estimates for $BV$ Solutions in 3D}
\label{sec: theory3d}

In this subsection, the symbol $D$ refers to the weak derivative of a multi-variable function, and $D_1$ is the weak partial with respect to the first component.

We follow the same organization as in the previous subsection. Assume that $u:\R^3\to\R$ is an axisymmetric function which is compactly supported in the cylinder $U\subset\R^3$, and that $v:\R^+\times\R\to\R$ is the function such that $v(r^2,z) = u(r,z)$. Analogous to Equation \eqref{eq: A-J connection 2d}, the following equation holds: 
\begin{align*}
\abel u(x,z) &= \sqrt{\pi}\abelJ v(x^2,z), \quad (x,z)\in\Omega. 
\end{align*}

The following two theorems state the existence and uniqueness of a solution to problems \eqref{eq: g=Jv model} and \eqref{eq: f=Au model}, respectively, which extend Theorems \ref{thm: PJ uniqueness 2d} and \ref{thm: PA uniqueness 2d} to the case where one solves for 3D axisymmetric solutions given 2D line-of-sight projections.


\begin{theorem} \label{thm: PJ uniqueness 3d}
Problem \eqref{eq: g=Jv model} has a unique solution in $L^1(\Omega)$ provided that the function $g$ is of bounded variation, $0\le g(0,z)<\infty$ for $z\in[-1,1]$, and $\supp(g)\subset[0,1) \times [-1,1]$. For each $r\in[0,1]$ and almost every $z\in[-1,1]$, the solution $v$ is given by:
\begin{align}
v(r,z) = -\dfrac{1}{\sqrt{\pi}}\int_r^1 \dfrac{\dd{g^x}}{\sqrt{x-r}}, \label{eq: PJ solution 3d}
\end{align}
where $g^x$ is a Radon measure such that:
\begin{align*}
\int_0^1 \phi(x)\dd{g^x} = -\int_0^1\phi'(x)g(x,z)\dd{x}
\end{align*} 
for all $\phi\in C^1(0,1)$.
\end{theorem}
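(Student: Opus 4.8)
The plan is to treat the transverse variable $z$ as a parameter and reduce the statement to the one-dimensional result of Theorem \ref{thm: PJ uniqueness 2d}, applied slice by slice. Since the operator $\abelJ$ in Equation \eqref{eq: def abelJ 3d} acts only on the first variable (it integrates in $r$ with $z$ held fixed), for each fixed $z$ the equation $g = \abelJ v$ reads $g(\cdot,z) = \abelJ\bigl(v(\cdot,z)\bigr)$, which is exactly the 1D problem \eqref{eq: g=Jv model}. The work is therefore entirely in (a) producing the hypotheses of Theorem \ref{thm: PJ uniqueness 2d} for almost every slice, (b) assembling the slice solutions into a jointly measurable function on $\Omega$ that lies in $L^1(\Omega)$, and (c) transferring existence, the inversion formula, and uniqueness from the slices back to $\Omega$.

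For step (a) I would invoke the theory of one-dimensional sections of $BV$ functions. Writing $g^z := g(\cdot,z)$, this gives that for $\mathcal{L}^1$-a.e.\ $z\in[-1,1]$ the section $g^z$ belongs to $BV(0,1)$, that its distributional derivative $Dg^z$ is exactly the measure $g^x$ appearing in the statement and depends measurably on $z$, and that
\[
\int_{-1}^1 |Dg^z|(0,1)\,\dd{z} = |D_1 g|(\Omega) < \infty.
\]
The standing hypotheses $0\le g(0,z)<\infty$ and $\supp(g)\subset[0,1)\times[-1,1]$ transfer to give, for a.e.\ $z$, that $0\le g^z(0)<\infty$ and $\supp(g^z)\subset[0,1)$. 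Thus Theorem \ref{thm: PJ uniqueness 2d} applies to each such slice and yields a unique $v^z\in L^1(0,1)$ given by Equation \eqref{eq: PJ solution 2d} with $g$ replaced by $g^z$; this is precisely Equation \eqref{eq: PJ solution 3d}. Setting $v(r,z):=v^z(r)$, joint measurability follows from the measurable dependence of $Dg^z$ on $z$ (or, alternatively, by mollifying $g$ in the $x$-variable, applying the formula to the smooth sections, and passing to the limit). For integrability I would estimate the inversion formula directly, using Tonelli's theorem and $\int_0^x (x-r)^{-1/2}\,\dd{r} = 2\sqrt{x}$:
\[
\int_0^1 |v(r,z)|\,\dd{r} \le \frac{1}{\sqrt{\pi}} \int_0^1 2\sqrt{x}\,\dd{|Dg^z|}(x) \le \frac{2}{\sqrt{\pi}}\,|Dg^z|(0,1).
\]
Integrating in $z$ and using the slicing identity above gives $\|v\|_{L^1(\Omega)} \le \tfrac{2}{\sqrt{\pi}}\,|D_1 g|(\Omega) < \infty$.

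Steps (b)--(c) then close quickly. Because $\abelJ$ acts only in the first variable, for a.e.\ $z$ we have $\abelJ v(x,z) = \abelJ(v^z)(x) = g^z(x) = g(x,z)$, hence $\abelJ v = g$ a.e.\ on $\Omega$, which establishes existence and validates the formula \eqref{eq: PJ solution 3d}. For uniqueness, suppose $w\in L^1(\Omega)$ also solves $\abelJ w = g$. By Fubini's theorem, $w^z\in L^1(0,1)$ for a.e.\ $z$ and satisfies $\abelJ(w^z) = g^z$; the uniqueness clause of Theorem \ref{thm: PJ uniqueness 2d} then forces $w^z = v^z$ a.e.\ on $(0,1)$ for a.e.\ $z$, so $w = v$ a.e.\ on $\Omega$.

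I expect the main obstacle to lie in steps (a)--(b): rigorously identifying the parametrized Radon measure $g^x$ with the sectional derivative $Dg^z$, establishing the joint measurability of $v$, and verifying that the slice hypotheses of Theorem \ref{thm: PJ uniqueness 2d} (in particular the boundary value $g^z(0)$ and the support condition) hold for a.e.\ $z$. Everything downstream---the $L^1(\Omega)$ bound, the verification $\abelJ v = g$, and uniqueness---is a routine application of Tonelli's theorem and the already-established one-dimensional result.
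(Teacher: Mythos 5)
Your proposal is correct and takes essentially the same route as the paper's proof: fix $z$, use the sectional theory of $BV$ functions to get $g(\cdot,z)\in BV(0,1)$ for a.e.\ $z$, apply Theorem \ref{thm: PJ uniqueness 2d} slice-wise, and integrate the resulting one-dimensional $L^1$ bound over $z\in[-1,1]$. The only cosmetic difference is that you control each slice by $\tfrac{2}{\sqrt{\pi}}\abs{Dg^z}(0,1)$ and sum via the slicing identity, while the paper integrates the equivalent boundary-value bound from Equation \eqref{eq: PJ uniqueness eq1 2d} (for monotone pieces supported in $[0,1)$ the two quantities coincide).
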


\begin{proof}
Let $g$ be as assumed. Since $g$ is of bounded variation, by, for example, Theorem 2 on page 220 in \cite{Evans92}, $g(\cdot,z)$ is of bounded variation for almost every $z\in[-1,1]$. Fix $z\in[-1,1]$ such that $g(\cdot,z)$ is of bounded variation. Then by Theorem \ref{thm: PJ uniqueness 2d}, the solution $v(\cdot,z)$ to the problem $\abelJ v(\cdot,z)=g(\cdot,z)$ is in $L^1(0,1)$ and is uniquely given by Equation \eqref{eq: PJ solution 3d}. In particular, Equation \eqref{eq: PJ uniqueness eq1 2d} in the proof of Theorem \ref{thm: PJ uniqueness 2d} implies that $v\in L^1(\Omega)$.
\end{proof}


\begin{theorem} \label{thm: PA uniqueness 3d}
Problem \eqref{eq: f=Au model} has a unique solution in $L^1(\Omega)$ provided that the function $f$ is of bounded variation, $0\le f(0,z)<\infty$ for $z\in[-1,1]$, and $\supp(f)\subset[0,1) \times [-1,1]$. For each $r\in[0,1]$ and almost every $z\in[-1,1]$, the solution $u$ is given by:
\begin{align*}
u(r,z) = -\dfrac{1}{\pi}\int_r^1 \dfrac{\dd{f^x}}{\sqrt{x^2-r^2}}, 
\end{align*}
where $f^x$ is a Radon measure such that:
\begin{align*}
\int_0^1 \phi(x)\dd{f^x} = -\int_0^1\phi'(x)f(x,z)\dd{x}
\end{align*} 
for all $\phi\in C^1(0,1)$.
\end{theorem}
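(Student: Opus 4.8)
The plan is to reduce the three-dimensional statement to the two-dimensional inversion result slice by slice, exactly paralleling the proof of Theorem~\ref{thm: PJ uniqueness 3d}. Because $f$ is of bounded variation on $\Omega$, the $BV$ slicing theorem (Theorem~2 on page~220 in \cite{Evans92}, the same tool invoked for Theorem~\ref{thm: PJ uniqueness 3d}) shows that the slice $f(\cdot,z)$ is of bounded variation on $[0,1]$ for almost every $z\in[-1,1]$. Combined with the standing hypotheses $0\le f(0,z)<\infty$ and $\supp(f)\subset[0,1)\times[-1,1]$, this means that for almost every fixed $z$ the one-dimensional data $f(\cdot,z)$ meets every assumption of Theorem~\ref{thm: PA uniqueness 2d}.

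First I would fix such a $z$ and apply Theorem~\ref{thm: PA uniqueness 2d} to the scalar equation $\abel u(\cdot,z)=f(\cdot,z)$. This produces a unique $u(\cdot,z)\in L^1(0,1)$ given precisely by the claimed formula, with $f^x$ identified as the Radon-measure derivative of the slice $f(\cdot,z)$ characterized by the stated duality $\int_0^1\phi(x)\,\dd{f^x}=-\int_0^1\phi'(x)f(x,z)\,\dd{x}$. An equivalent route to the same per-slice conclusion is the change of variables $v(r^2,z)=u(r,z)$ together with the identity $\abel u(x,z)=\sqrt{\pi}\,\abelJ v(x^2,z)$, the three-dimensional analogue of Equation~\eqref{eq: A-J connection 2d}: setting $g(s,z)=\pi^{-1/2}f(\sqrt{s},z)$ turns Problem~\eqref{eq: f=Au model} into Problem~\eqref{eq: g=Jv model} for $v$, and one checks that $g$ inherits bounded variation and the support and boundary hypotheses from $f$, since the singular weight $s^{-1/2}$ created by $x=\sqrt{s}$ is exactly compensated by the Jacobian of the substitution. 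Theorem~\ref{thm: PJ uniqueness 3d} then applies directly.

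The crux, and the step I expect to be the main obstacle, is assembling the family $\{u(\cdot,z)\}_z$ into a single element of $L^1(\Omega)$. Joint measurability of $(r,z)\mapsto u(r,z)$ follows from the explicit representation formula, while the integrability requires a quantitative per-slice bound: the proof of the two-dimensional result (ultimately Equation~\eqref{eq: PJ uniqueness eq1 2d} transported through Equation~\eqref{eq: A-J connection 2d}) supplies an $L^1$ estimate of the slice solution in terms of the total variation and boundary value of $f(\cdot,z)$. Integrating this bound over $z\in[-1,1]$, and invoking once more that full two-dimensional bounded variation of $f$ forces the slice variations $z\mapsto\|f(\cdot,z)\|_{TV(0,1)}$ to be integrable, yields $u\in L^1(\Omega)$. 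Uniqueness is then immediate, since any two $L^1(\Omega)$ solutions must solve the same one-dimensional problem on almost every slice and therefore agree almost everywhere by the uniqueness in Theorem~\ref{thm: PA uniqueness 2d}. The per-slice inversion is handed to us by Theorem~\ref{thm: PA uniqueness 2d}, so the real work lies in controlling the slice $L^1$ norms uniformly enough in $z$ to integrate, which is precisely where the appendix estimate is needed.
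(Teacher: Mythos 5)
Your proposal is correct and takes essentially the same route as the paper: the paper proves Theorem~\ref{thm: PJ uniqueness 3d} by exactly this slicing argument (the $BV$ slicing theorem of \cite{Evans92}, per-slice application of the 2D uniqueness result, and the $L^1$ bound of Equation~\eqref{eq: PJ uniqueness eq1 2d}), and then obtains Theorem~\ref{thm: PA uniqueness 3d} from it via the change of variables $v(r^2,z)=u(r,z)$, just as you describe. If anything, your handling of the assembly step --- integrating the per-slice $L^1$ bound in $z$ using the integrability of the slice variations --- is spelled out more explicitly than in the paper.
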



\begin{remark} \label{rem: bv continuity 3d}
Theorem \ref{thm: J bv continuity 2d} and Corollary \ref{cor: A bv continuity 2d} can be extended to 3D axisymmetric solutions given 2D data, but one can only provide $1/2$-H{\"o}lder continuity along almost every line of integration. Unfortunately, global conditions are not guaranteed; a counterexample can be constructed as follows. Let $v$ be a scalar-valued function defined on $\Omega$ such that:
\begin{align*}
v(r,z) = 
\begin{cases}
1, \quad \text{if } z\in S, \\
0, \quad \text{if } z\notin S,
\end{cases}
\end{align*}
where $S\subset[0,1]$ is a non-measurable set. Then $v(\cdot,z)\in BV(0,1)$ for each fixed $z$, but $v$ is not in $BV(\Omega)$. This motivates the use of global total variation penalty, $\|v\|_{TV}$, rather than a penalty along each line, $\int_{-1}^1\|v(\cdot,z)\|_{TV(0,1)}\dd{z}$.
\end{remark}


The following lemma provides two basic estimates for the running average of a function defined on $\Omega$ along lines parallel to an axis. The auxiliary function $v_h$ defined below plays a similar role to the one defined in Equation \eqref{eq: running average 2d}.


\begin{lemma} \label{lem: basic bounds 3d}
Let $v\in W^{1,1}(\Omega)\cap L^\infty(\Omega)$. Let $h\in(0, 1/2]$ and define $v_h:\Omega_h\to\R$ by:
\begin{align}
v_h(x,z) := \dfrac{1}{h} \int_{x-h}^x v(y,z)\dd{y}. \label{eq: running average 3d}
\end{align}
Then the following two estimates hold:
\begin{align}
\|v-v_h\|_{L^2(\Omega_h)} &\le \left(4/3\right)^{1/4}h^{1/4}\|v\|_{L^\infty(\Omega)}^{1/2} \|Dv\|_{L^1(\Omega)}^{1/2}, \label{eq: basic bound 3d} \\
\|v_h\|_{L^\infty(\Omega_h)} &\le \|v\|_{L^\infty(\Omega)}. \label{eq: vh inf bound 3d}
\end{align}
\end{lemma}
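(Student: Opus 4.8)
The plan is to reduce the two-dimensional estimate to a family of one-dimensional estimates along the lines $\{z=\mathrm{const}\}$, apply the one-dimensional Lemma~\ref{lem: basic bounds 2d} on each slice, and then recombine by integrating in $z$. The second bound \eqref{eq: vh inf bound 3d} is immediate and mimics the $1$D argument: for $(x,z)\in\Omega_h$ one has $|v_h(x,z)|\le \tfrac1h\int_{x-h}^x|v(y,z)|\dd{y}\le\|v\|_{L^\infty(\Omega)}$, so the whole of the work lies in \eqref{eq: basic bound 3d}. Since $v\in W^{1,1}(\Omega)$, a Fubini-type (ACL) argument shows that for almost every $z\in[-1,1]$ the slice $v(\cdot,z)$ lies in $W^{1,1}(0,1)$ with $\partial_x v(\cdot,z)=(D_1v)(\cdot,z)$ a.e.; this is the one regularity fact I must invoke before slicing, and it is the only genuinely delicate point (it also guarantees that the slice estimates below are measurable in $z$, which is needed for the final integration).

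For a.e.\ fixed $z$, Lemma~\ref{lem: basic bounds 2d}, Equation~\eqref{eq: basic bound 2d}, applied to $v(\cdot,z)$ gives the slice estimate
\[
\|v(\cdot,z)-v_h(\cdot,z)\|_{L^2(h,1)}\le 3^{-1/2}h^{1/2}\,\|(D_1v)(\cdot,z)\|_{L^1(0,1)}.
\]
Independently, the $L^\infty$ bound \eqref{eq: vh inf bound 3d} together with $\|v(\cdot,z)\|_{L^\infty}\le\|v\|_{L^\infty(\Omega)}$ yields $\|v(\cdot,z)-v_h(\cdot,z)\|_{L^\infty(h,1)}\le 2\|v\|_{L^\infty(\Omega)}$, whence $\|v(\cdot,z)-v_h(\cdot,z)\|_{L^2(h,1)}\le 2(1-h)^{1/2}\|v\|_{L^\infty(\Omega)}$. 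The conceptual step is to treat these as two bounds on the same quantity and to split $\|\cdot\|_{L^2}^2=\|\cdot\|_{L^2}\cdot\|\cdot\|_{L^2}$, estimating one factor by the first inequality and the other by the second:
\[
\|v(\cdot,z)-v_h(\cdot,z)\|_{L^2(h,1)}^2\le 2\cdot3^{-1/2}\,h^{1/2}(1-h)^{1/2}\,\|v\|_{L^\infty(\Omega)}\,\|(D_1v)(\cdot,z)\|_{L^1(0,1)}.
\]

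I would then integrate this inequality over $z\in[-1,1]$. Using $(1-h)^{1/2}\le1$, the identity $\int_{-1}^1\|(D_1v)(\cdot,z)\|_{L^1(0,1)}\dd{z}=\|D_1v\|_{L^1(\Omega)}$, and the pointwise inequality $|D_1v|\le|Dv|$ (so that $\|D_1v\|_{L^1(\Omega)}\le\|Dv\|_{L^1(\Omega)}$), I obtain
\[
\|v-v_h\|_{L^2(\Omega_h)}^2\le 2\cdot3^{-1/2}\,h^{1/2}\,\|v\|_{L^\infty(\Omega)}\,\|Dv\|_{L^1(\Omega)}=\left(4/3\right)^{1/2}h^{1/2}\,\|v\|_{L^\infty(\Omega)}\,\|Dv\|_{L^1(\Omega)},
\]
and taking square roots gives exactly \eqref{eq: basic bound 3d}, including the constant $(4/3)^{1/4}$.

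The hard part is really just the regularity reduction in the first paragraph: one must ensure that slicing a $W^{1,1}(\Omega)$ function produces $W^{1,1}$ slices with the expected weak derivative for a.e.\ $z$, so that Lemma~\ref{lem: basic bounds 2d} is legitimately applicable slice-by-slice. Everything after that is bookkeeping; the only clever move is the geometric-mean splitting, which is precisely what trades the sharp $1$D rate $h^{1/2}$ against the uniform $L^\infty$ control to produce the $h^{1/4}$ rate and the product $\|v\|_{L^\infty(\Omega)}^{1/2}\|Dv\|_{L^1(\Omega)}^{1/2}$ appearing in \eqref{eq: basic bound 3d}. This mixed rate, rather than a pure $h^{1/2}$ estimate, is the form that will balance the powers of $h$ in the subsequent optimization leading to the $1/3$-exponents of the $3$D stability bound.
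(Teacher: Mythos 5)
Your proposal is correct and follows essentially the same route as the paper: slice in $z$, apply the 1D Lemma~\ref{lem: basic bounds 2d} to each slice, trade the $h^{1/2}$ derivative bound against the uniform bound $\|v_h\|_{L^\infty}\le\|v\|_{L^\infty}$, and finish with $\|D_1v\|_{L^1(\Omega)}\le\|Dv\|_{L^1(\Omega)}$. The only (immaterial) difference is where the interpolation happens: you multiply the two per-slice $L^2$ bounds before integrating in $z$, whereas the paper first assembles the global bound on $\|v-v_h\|_{L^1(\Omega_h)}$ and then applies $\|f\|_{L^2}^2\le\|f\|_{L^\infty}\|f\|_{L^1}$ on $\Omega_h$; both orderings yield the identical constant $(4/3)^{1/4}$.
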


\begin{proof}
By, for example, Theorem 10.35 in \cite{Leoni09}, for almost every $z\in[-1,1]$, $v(\cdot,z)$ is absolutely continuous, so that $v_h$ is well-defined. Replacing $v(\cdot)$ by $v(\cdot,z)$ in the proof of Lemma \ref{lem: basic bounds 2d}, one can obtain Equation \eqref{eq: vh inf bound 3d} from Equation \eqref{eq: vh inf bound 2d} and the following estimate from Equation~\eqref{eq: basic bound 2d}:
\begin{align}
\|v(\cdot,z)-v_h(\cdot,z)\|_{L^2(h,1)} \le 3^{-1/2}h^{1/2}\|D_1v(\cdot,z)\|_{L^1(0,1)}, \quad \text{a.e.}\ z\in[-1,1]. \label{eq: basic bound eq1 3d}
\end{align}
To obtain Equation \eqref{eq: basic bound 3d} from Equation \eqref{eq: basic bound eq1 3d}, we first apply the $L^p$ embedding theorem:
\begin{align}
\|v-v_h\|_{L^1(\Omega_h)} &= \int_{-1}^1\|v(\cdot,z)-v_h(\cdot,z)\|_{L^1(h,1)}\dd{z} \le \int_{-1}^1\|v(\cdot,z)-v_h(\cdot,z)\|_{L^2(h,1)}\dd{z}\nonumber \\
&\le 3^{-1/2}h^{1/2}\int_{-1}^1\|D_1v(\cdot,z)\|_{L^1(0,1)}\dd{z} = 3^{-1/2}h^{1/2} \|D_1v\|_{L^1(\Omega)}, \label{eq: basic bound eq2 3d}
\end{align}
which gives a bound that is a function of $h$. Then we apply the $L^p$ interpolation theorem:
\begin{flalign*}
&& \|v-v_h\|_{L^2(\Omega_h)}^2 &\le \|v-v_h\|_{L^\infty(\Omega_h)}\|v-v_h\|_{L^1(\Omega_h)} && \\
&& &\le (\|v\|_{L^\infty(\Omega_h)}+\|v_h\|_{L^\infty(\Omega_h)})\|v-v_h\|_{L^1(\Omega_h)} && \\
&& &\le 2\|v\|_{L^\infty(\Omega)}\|v-v_h\|_{L^1(\Omega_h)} && \text{(by Eq. \eqref{eq: vh inf bound 3d})} \\
&& &\le \frac{2}{\sqrt{3}}h^{1/2} \|v\|_{L^\infty(\Omega)}\|D_1v\|_{L^1(\Omega)} && \text{(by Eq. \eqref{eq: basic bound eq2 3d})} \\
&& &\le \frac{2}{\sqrt{3}}h^{1/2} \|v\|_{L^\infty(\Omega)}\|Dv\|_{L^1(\Omega)}, && 
\end{flalign*}
where the last step follows from Lemma \ref{lem: partial total variation}. 
\end{proof}


\begin{remark} \label{rem: basic bounds alternative 3d}
One may be able to avoid the introduction of $\|v\|_{L^\infty(\Omega)}$ into the error bound for $\|v-v_h\|_{L^2(\Omega_h)}$ via an argument similar to the one in Remark \ref{rem: basic bounds alternative 2d}. However, similar issue may arise since there might not be an interior minimizer in $h$ when estimating $\|v\|_{L^2(\Omega)}$.
\end{remark}


The following theorem shows a stability estimate for $W^{1,1}\cap L^\infty$ solutions to Problem \eqref{eq: g=Jv model}. The additional $L^\infty$ condition is reasonable given that we are recovering images. 

\begin{theorem} \label{thm: PJ w11 bound 3d}
If $v\in W^{1,1}(\Omega)\cap L^\infty(\Omega)$ with $\supp(v)\subset[0,1)\times[-1,1]$ and $\abelJ v = g$, we have:
\begin{align}
\|v\|_{L^2(\Omega)} &\le C\|v\|_{L^\infty(\Omega)}^{1/3}\|Dv\|_{L^1(\Omega)}^{1/3}\|g\|_{L^2(\Omega)}^{1/3}, \label{eq: PJ w11 bound 3d}
\end{align}
where $C$ is a constant independent of $v$.
\end{theorem}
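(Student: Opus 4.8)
The plan is to mirror the one-dimensional argument of Theorem \ref{thm: PJ w11 bound 2d} slice-by-slice in the $z$-variable, using the three-dimensional running-average estimate \eqref{eq: basic bound 3d} in place of the planar bound \eqref{eq: basic bound 2d}. First I would fix $z\in[-1,1]$ and note that the key identity $\abelJ^2 v(x,z)=\int_x^1 v(y,z)\dd{y}$ holds for almost every $z$ by the same Fubini computation as in \eqref{eq: PJ-w11 eq1 2d}, since the kernels in Definition \ref{def: J transform} do not act on $z$. Exactly as in \eqref{eq: PJ-w11 eq2 2d}, this yields the representation
\[
\sqrt{\pi}\,h\,v_h(x,z)=F_1(x,z)+F_2(x,z),\qquad F_i(x,z)=\int_0^1 g(y,z)K_i(x,y)\dd{y},
\]
where $v_h$ is the running average \eqref{eq: running average 3d} and $K_1,K_2$ are the same $z$-independent $L^1$ kernels as in the proof of Theorem \ref{thm: PJ w11 bound 2d}.

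Next I would promote the slice estimates to all of $\Omega$. For each fixed $z$ the Schur-type bound behind \eqref{eq: PJ-w11 eq3 2d} gives $\|F_i(\cdot,z)\|_{L^2(h,1)}\le C_i h^{1/2}\|g(\cdot,z)\|_{L^2(0,1)}$; squaring and integrating in $z$ then produces $\|v_h\|_{L^2(\Omega_h)}\le C\pi^{-1/2}h^{-1/2}\|g\|_{L^2(\Omega)}$. Combining this with the running-average bound \eqref{eq: basic bound 3d} and a boundary-strip estimate for $\|v\|_{L^2([0,h]\times[-1,1])}$ — obtained by applying Poincar\'e's inequality in the radial variable on each slice together with the interpolation $\|v(\cdot,z)\|_{L^\infty(0,1)}^2\le\|v\|_{L^\infty(\Omega)}\|D_1 v(\cdot,z)\|_{L^1(0,1)}$ and integrating, which yields a term of order $h^{1/2}\|v\|_{L^\infty(\Omega)}^{1/2}\|Dv\|_{L^1(\Omega)}^{1/2}$ dominated by the $h^{1/4}$ term for $h\le1$ — the triangle inequality gives
\[
\|v\|_{L^2(\Omega)}\le C_1 h^{1/4}\,\|v\|_{L^\infty(\Omega)}^{1/2}\|Dv\|_{L^1(\Omega)}^{1/2}+C_2 h^{-1/2}\|g\|_{L^2(\Omega)}.
\]

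Finally I would optimize in $h$. Writing $A=\|v\|_{L^\infty(\Omega)}^{1/2}\|Dv\|_{L^1(\Omega)}^{1/2}$ and $B=\|g\|_{L^2(\Omega)}$, the minimizer is $h^\ast=(2C_2B/C_1A)^{4/3}$, and substituting it returns a bound $CA^{2/3}B^{1/3}=C\|v\|_{L^\infty(\Omega)}^{1/3}\|Dv\|_{L^1(\Omega)}^{1/3}\|g\|_{L^2(\Omega)}^{1/3}$, which is \eqref{eq: PJ w11 bound 3d}. To confirm $h^\ast\in(0,1/2]$ I would establish $B\le cA$: from $\abelJ v=g$ one has $\|g\|_{L^\infty(\Omega)}\le \tfrac{2}{\sqrt{\pi}}\|v\|_{L^\infty(\Omega)}$ directly, while a per-slice Young's inequality after integration by parts (as in \eqref{eq: PJ-w11 eq8 2d}--\eqref{eq: PJ-w11 eq9 2d}) gives $\|g\|_{L^1(\Omega)}\le \tfrac{4}{3\sqrt{\pi}}\|Dv\|_{L^1(\Omega)}$, so that $\|g\|_{L^2(\Omega)}^2\le\|g\|_{L^\infty(\Omega)}\|g\|_{L^1(\Omega)}\le cA^2$.

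The main obstacle I anticipate is bookkeeping the change in $h$-scaling: the three-dimensional averaging estimate \eqref{eq: basic bound 3d} scales like $h^{1/4}$ rather than the $h^{1/2}$ of the planar case, and it is precisely this weaker exponent — forced by the extra $L^\infty$-interpolation in Lemma \ref{lem: basic bounds 3d} — that converts the $1/2$-exponents of Theorem \ref{thm: PJ w11 bound 2d} into the $1/3$-exponents here. Care is needed to express all three contributions (interior average, running-average error, and boundary strip) through the common factor $A$ before optimizing, and to verify that the boundary-strip term is genuinely subordinate so that it does not degrade the optimal rate.
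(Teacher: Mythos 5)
Your proposal is correct and follows essentially the same route as the paper's own proof: the per-slice reduction to the kernel estimates of Theorem \ref{thm: PJ w11 bound 2d}, the decomposition into boundary strip, running-average error (via Lemma \ref{lem: basic bounds 3d}), and running average, the optimization of $C_1h^{1/4}A+C_2h^{-1/2}B$ over $h\in(0,1/2]$, and the constraint check through $\|g\|_{L^2(\Omega)}^2\le\|g\|_{L^\infty(\Omega)}\|g\|_{L^1(\Omega)}\le c\|v\|_{L^\infty(\Omega)}\|Dv\|_{L^1(\Omega)}$. The only (harmless) deviation is your boundary-strip bound, where you use per-slice Poincar\'e together with the global $L^\infty$ bound to get $h^{1/2}$ scaling, while the paper uses interpolation, embedding, and Poincar\'e in 2D on the strip to get $(2h)^{1/4}$; both are subordinate to the dominant $h^{1/4}$ term, so the final exponents and conclusion are identical.
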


\begin{proof}
Replacing $v(\cdot)$ by $v(\cdot,z)$ in the proof of Theorem \ref{thm: PJ w11 bound 2d}, one can show from Equations \eqref{eq: PJ-w11 eq4 2d} and \eqref{eq: PJ-w11 eq5 2d} that for almost every $z\in[-1,1]$:
\begin{align}
\|v_h(\cdot,z)\|_{L^2(h,1)} &\le 2\left(3-\sqrt{2}\right)\pi^{-1/2}h^{-1/2} \|g(\cdot,z)\|_{L^2(0,1)}, \label{eq: PJ-w11 eq1 3d} \\
\|v(\cdot,z)\|_{L^2(0,h)} &\le h^{1/2}\|D_1v(\cdot,z)\|_{L^1(0,1)}. \label{eq: PJ-w11 eq2 3d}
\end{align}
The consequence of Equation \eqref{eq: PJ-w11 eq1 3d} is immediate:
\begin{align}
\|v_h\|_{L^2(\Omega_h)} &\le 2\left(3-\sqrt{2}\right)\pi^{-1/2}h^{-1/2} \|g\|_{L^2(\Omega)}. \label{eq: PJ-w11 eq3 3d}
\end{align}
To obtain an analogy of Equation \eqref{eq: PJ-w11 eq5 2d} from Equation \eqref{eq: PJ-w11 eq2 3d}, we apply the $L^p$ interpolation theorem and embedding theorem, as well as Poincar\'{e}'s inequality in 2D:
\begin{align}
\|v\|_{L^2(\Omega\backslash\Omega_h)} &\le \|v\|_{L^1(\Omega\backslash\Omega_h)}^{1/2}\|v\|_{L^\infty(\Omega\backslash\Omega_h)}^{1/2} \le (2h)^{1/4}\|v\|_{L^2(\Omega\backslash\Omega_h)}^{1/2}\|v\|_{L^\infty(\Omega\backslash\Omega_h)}^{1/2} \nonumber \\
&\le (2h)^{1/4}\|Dv\|_{L^1(\Omega\backslash\Omega_h)}^{1/2}\|v\|_{L^\infty(\Omega\backslash\Omega_h)}^{1/2} \le (2h)^{1/4}\|Dv\|_{L^1(\Omega)}^{1/2}\|v\|_{L^\infty(\Omega)}^{1/2}, \label{eq: PJ-w11 eq4 3d}
\end{align}
where the $2h$ factor comes from the measure of the set $\Omega\backslash\Omega_h$.

By the triangle inequality and Equations \eqref{eq: basic bound 3d} and \eqref{eq: PJ-w11 eq3 3d}-\eqref{eq: PJ-w11 eq4 3d}, we have
\begin{align}
\|v\|_{L^2(\Omega)} &\le \|v\|_{L^2(\Omega\backslash\Omega_h)} + \|v-v_h\|_{L^2(\Omega_h)} + \|v_h\|_{L^2(\Omega_h)} \nonumber \\
&\le \left((4/3)^{1/4}+2^{1/4}\right)h^{1/4} \|v\|_{L^\infty(\Omega)}^{1/2}\|Dv\|_{L^1(\Omega)}^{1/2} + 2\left(3\sqrt{2}-2\right)\pi^{-1/2}h^{-1/2}\|g\|_{L^2(\Omega)} \nonumber \\
&\le 4\left((4/3)^{1/4}+2^{1/4}\right)h^{1/4} \|v\|_{L^\infty(\Omega)}^{1/2}\|Dv\|_{L^1(\Omega)}^{1/2} + 2\left(3\sqrt{2}-2\right)\pi^{-1/2}h^{-1/2}\|g\|_{L^2(\Omega)}, \label{eq: PJ-w11 eq5 3d}
\end{align}
where we have slightly extended the bound in the last step so that Equation \eqref{eq: PJ-w11 eq5 3d} can be optimized over $h\in(0, 1/2]$. The value $h^*$ that minimizes the right-hand side of Equation \eqref{eq: PJ-w11 eq5 3d} is given by
\begin{align}
h^* &= \left(\dfrac{\pi^{-1/2}\left(3\sqrt{2}-2\right)\|g\|_{L^2(\Omega)}}{\left((4/3)^{1/4}+2^{1/4}\right) \|v\|_{L^\infty(\Omega)}^{1/2}\|Dv\|_{L^1(\Omega)}^{1/2}}\right)^{4/3}. \label{eq: PJ-w11 eq6 3d}
\end{align}
We now verify that $h^*\in(0, 1/2]$, which depends on the factor $\|g\|_{L^2(\Omega)}\|v\|_{L^\infty(\Omega)}^{-1/2}\|Dv\|_{L^1(\Omega)}^{-1/2}$. Using the same derivation as in Equation \eqref{eq: PJ-w11 eq8 2d}, we have, for $(x,z)\in\Omega$,
\begin{align*}
g(x,z) = - \dfrac{2}{\sqrt{\pi}} \int_x^1 D_1v(y,z)\sqrt{|x-y|}\dd{y}.
\end{align*}
We first bound $\|g\|_{L^\infty(\Omega)}$ by $\|Dv\|_{L^\infty(\Omega)}$. Equation \eqref{eq: def abelJ 3d} implies that for $(x,z)\in\Omega$,
\begin{align*}
|g(x,z)| \le \dfrac{1}{\sqrt{\pi}} \int_x^1 \dfrac{|v(r,z)|}{\sqrt{r-x}}\dd{r} \le \left(\dfrac{1}{\sqrt{\pi}} \int_x^1 \dfrac{1}{\sqrt{r-x}}\dd{r}\right)\|v\|_{L^\infty(\Omega)} = \dfrac{2\sqrt{1-x}}{\sqrt{\pi}}\|v\|_{L^\infty(\Omega)},
\end{align*}
and thus:
\begin{align*}
\|g\|_{L^\infty(\Omega)} \le \dfrac{2}{\sqrt{\pi}}\|v\|_{L^\infty(\Omega)}.
\end{align*}
We then bound $\|g\|_{L^1(\Omega)}$ by $\|Dv\|_{L^1(\Omega)}$ by applying Young's inequality for convolutions with:
\begin{align*}
K(x,z) = - \dfrac{2}{\sqrt{\pi}}\sqrt{|x|}\mathds{1}_{[0,1]}(-x), \quad f(x,z) = D_1v(x,z)\mathds{1}_{[0,1]}(x).
\end{align*}
Thus, $g(x,z) = \int K(x-y,z)f(y,z)\dd{y}$ for $(x,z)\in\Omega$, and with $K$ and $f$ extended to $\R\times[-1,1]$:
\begin{align*}
\|g(\cdot,z)\|_{L^1(0,1)} = \|K*f(\cdot,z)\|_{L^1(0,1)} \le \|K(\cdot,z)\|_{L^1(\R)}\|f(\cdot,z)\|_{L^1(\R)} = \dfrac{4}{3\sqrt{\pi}} \|D_1v(\cdot,z)\|_{L^1(0,1)}.
\end{align*}
Therefore,
\begin{align*}
\|g\|_{L^1(\Omega)} \le \dfrac{4}{3\sqrt{\pi}} \|D_1v\|_{L^1(\Omega)} \le \dfrac{4}{3\sqrt{\pi}} \|Dv\|_{L^1(\Omega)}.
\end{align*}
Applying the $L^p$ interpolation theorem, we obtain:
\begin{align}
\|g\|_{L^2(\Omega)}^2 &\le \|g\|_{L^\infty(\Omega)}\|g\|_{L^1(\Omega)} \le \dfrac{8}{3\pi} \|v\|_{L^\infty(\Omega)}\|Dv\|_{L^1(\Omega)}.  \label{eq: PJ-w11 eq7 3d}
\end{align}
Combining Equations \eqref{eq: PJ-w11 eq6 3d} and \eqref{eq: PJ-w11 eq7 3d} yields:
\begin{align*}
h^{*} &=  \left(\dfrac{\pi^{-1/2}\left(3\sqrt{2}-2\right)\|g\|_{L^2(\Omega)}}{\left((4/3)^{1/4}+2^{1/4}\right) \|v\|_{L^\infty(\Omega)}^{1/2}\|Dv\|_{L^1(\Omega)}^{1/2}}\right)^{4/3} \le \left(\dfrac{12-4\sqrt{2}}{\sqrt{3}\left((4/3)^{1/4}+2^{1/4}\right)\pi} \right)^{4/3} \le \dfrac{1}{2}.
\end{align*}
Therefore, optimizing Equation \eqref{eq: PJ-w11 eq5 3d} over $h$ yields:
\begin{align}
\|v\|_{L^2(\Omega)} &\le \left(\dfrac{\pi^{-1/2}\left(3\sqrt{2}-2\right)}{\left((4/3)^{1/4}+2^{1/4}\right) }\right)^{4/3}\|v\|_{L^\infty(\Omega)}^{1/3}\|Dv\|_{L^1(\Omega)}^{1/3}\|g\|_{L^2(\Omega)}^{1/3}, \label{eq: PJ-w11 eq8 3d}
\end{align}
which gives Equation \eqref{eq: PJ w11 bound 3d}. 
\end{proof}


We now extend the result in Theorem \ref{thm: PJ w11 bound 3d} to obtain an $L^2$-stability estimate for $BV$ solutions to Problem \eqref{eq: g=Jv model} via a density argument with Lipschitz continuous functions.

\begin{theorem} \label{thm: PJ bv bound 3d}
If $v\in BV(\Omega)\cap L^\infty(\Omega)$ with $\supp(v)\subset[0,1)\times[-1,1]$ and $\abelJ v = g$, we have:
\begin{align}
\|v\|_{L^2(\Omega)} &\le C\|v\|_{L^\infty(\Omega)}^{1/3}\|v\|_{TV(\Omega)}^{1/3}\|g\|_{L^2(\Omega)}^{1/3}, \label{eq: PJ bv bound 3d}
\end{align}
where $C$ is a constant independent of $v$.
\end{theorem}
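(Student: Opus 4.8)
The plan is to mirror the density argument used for the two-dimensional case in Theorem \ref{thm: PJ bv bound 2d}, promoting the $W^{1,1}\cap L^\infty$ estimate of Theorem \ref{thm: PJ w11 bound 3d} to all of $BV(\Omega)\cap L^\infty(\Omega)$. First I would invoke a Lipschitz approximation theorem for $BV$ functions to produce a sequence $\{v_k\}\subset W^{1,1}(\Omega)\cap L^\infty(\Omega)$ with $\supp(v_k)\subset[0,1)\times[-1,1]$ (which survives mollification for $k$ large, since the support of $v$ is bounded away from $r=1$) satisfying $\|v_k-v\|_{L^1(\Omega)}\to0$, $v_k\to v$ a.e. along a subsequence, and strict convergence $\|Dv_k\|_{L^1(\Omega)}\to\|v\|_{TV(\Omega)}$. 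The essential extra feature I would extract from the construction is that the approximation is $L^\infty$-nonincreasing, i.e. $\|v_k\|_{L^\infty(\Omega)}\le\|v\|_{L^\infty(\Omega)}$; this holds because the $v_k$ are obtained by convolving with a probability kernel (and truncating). Applying Theorem \ref{thm: PJ w11 bound 3d} to each $v_k$ with $g_k:=\abelJ v_k$ gives
\begin{align*}
\|v_k\|_{L^2(\Omega)} \le C\|v_k\|_{L^\infty(\Omega)}^{1/3}\|Dv_k\|_{L^1(\Omega)}^{1/3}\|g_k\|_{L^2(\Omega)}^{1/3}.
\end{align*}

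Next I would pass to the limit in the three factors on the right. The $L^\infty$ factor is immediately controlled by $\|v\|_{L^\infty(\Omega)}$, and the total-variation factor converges to $\|v\|_{TV(\Omega)}$ by strict convergence. The remaining task is to show $\|g_k\|_{L^2(\Omega)}\to\|g\|_{L^2(\Omega)}$, for which it suffices to prove $\|g_k-g\|_{L^2(\Omega)}\to0$. Since the $\abelJ$-transform acts only in the first variable with $z$ as a parameter, I would apply the one-dimensional $L^2$-continuity of $\abelJ$ (the bound from Theorem \ref{thm: J lp continuity 2d} with $p=2$) along almost every line $z$ and integrate in $z$, obtaining
\begin{align*}
\|g_k-g\|_{L^2(\Omega)} = \|\abelJ(v_k-v)\|_{L^2(\Omega)} \le \dfrac{2}{\sqrt{\pi}}\|v_k-v\|_{L^2(\Omega)}.
\end{align*}

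This is the step where the three-dimensional argument must diverge from the two-dimensional one, and I expect it to be the main subtlety. In the proof of Theorem \ref{thm: PJ bv bound 2d} the factor $\|v_k-v\|_{L^\infty}$ was dominated by the total variation through the $1$D Poincar\'{e} inequality; as Remark \ref{rem: bv continuity 3d} shows, no such global $L^\infty$-control by the $TV$ semi-norm is available on the two-dimensional domain $\Omega$. Instead I would exploit the standing hypothesis $v\in L^\infty(\Omega)$ together with the $L^\infty$-nonincreasing property of the approximation: interpolating,
\begin{align*}
\|v_k-v\|_{L^2(\Omega)} \le \|v_k-v\|_{L^1(\Omega)}^{1/2}\|v_k-v\|_{L^\infty(\Omega)}^{1/2} \le \left(2\|v\|_{L^\infty(\Omega)}\right)^{1/2}\|v_k-v\|_{L^1(\Omega)}^{1/2},
\end{align*}
which tends to $0$ by $L^1$-convergence, yielding the desired data convergence.

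Finally, combining the three limits with Fatou's Lemma applied to the a.e.-convergent subsequence gives
\begin{align*}
\|v\|_{L^2(\Omega)} \le \liminf_{k\to\infty}\|v_k\|_{L^2(\Omega)} \le C\|v\|_{L^\infty(\Omega)}^{1/3}\|v\|_{TV(\Omega)}^{1/3}\|g\|_{L^2(\Omega)}^{1/3},
\end{align*}
which is \eqref{eq: PJ bv bound 3d}. The main obstacle, as indicated above, is securing the data convergence without a Poincar\'{e}-type $L^\infty$ bound; the resolution is to choose an approximation that simultaneously achieves strict $BV$ convergence and preserves the $L^\infty$ bound, and to route the $L^\infty$ factor through the hypothesis $v\in L^\infty(\Omega)$ rather than through the regularity of the approximants. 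A secondary technical point is to verify that the support near $r=1$ and the $z$-boundary behavior of the mollified $v_k$ remain compatible with the hypotheses of Theorem \ref{thm: PJ w11 bound 3d}.
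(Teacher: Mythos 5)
Your proposal is correct and follows essentially the same route as the paper: an $L^\infty$-preserving Lipschitz approximation with strict $TV$ convergence (the paper's Theorem \ref{thm: approximating sequence}, adapted from Evans--Gariepy precisely to secure the bound $\|v_k\|_{L^\infty(\Omega)}\le\|v\|_{L^\infty(\Omega)}$ you identify as essential), followed by Theorem \ref{thm: PJ w11 bound 3d} on the approximants and data convergence via the bound $\|\abelJ w\|_{L^2(\Omega)}\le \tfrac{2}{\sqrt{\pi}}\|w\|_{L^2(\Omega)}$ (Corollary \ref{cor: J lp continuity 3d}). The only cosmetic difference is that you upgrade $L^1$ convergence of $v_k$ to $L^2$ convergence by interpolation against the uniform $L^\infty$ bound and then use Fatou's Lemma, whereas the paper's approximation theorem supplies $L^2$ convergence directly and passes to the limit by norm convergence.
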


\begin{proof}
Note that the assumption $v\in L^\infty(\Omega)$ implies that $v\in L^2(\Omega)$. By Theorem \ref{thm: approximating sequence}, there exists a sequence of functions $\{v_k\}_{k=1}^\infty\subset \lip(\Omega)$ such that 
\begin{subequations}
\begin{align}
&\|v_k-v\|_{L^2(\Omega)}\to0 \quad {\rm as}\,\,k\to\infty, \label{eq: PJ-bv eq1a 3d} \\
&\|v_k\|_{TV(\Omega)}\to\|v\|_{TV(\Omega)} \quad {\rm as}\,\,k\to\infty, \label{eq: PJ-bv eq1b 3d} \\
{\rm and}\quad &\|v_k\|_{L^\infty(\Omega)} \le \|v\|_{L^\infty(\Omega)}\quad{\rm for\,\,all}\,\,k. \label{eq: PJ-bv eq1c 3d}
\end{align}\label{eq: PJ-bv eq1 3d}
\end{subequations}
Define $g_k:=\abelJ v_k$. By Theorem \ref{thm: PJ w11 bound 3d}, 
\begin{align}
\|v_k\|_{L^2(\Omega)} &\le C\|v_k\|_{L^\infty(\Omega)}^{1/3}\|Dv_k\|_{L^1(\Omega)}^{1/3}\|g_k\|_{L^2(\Omega)}^{1/3}, \label{eq: PJ-bv eq2 3d}
\end{align}
where $C$ is a constant independent of the choice of the approximating sequence. For each $k\geq 1$, since $v_k\in\lip(\Omega)$, $Dv_k$ exists almost everywhere, and thus condition \eqref{eq: PJ-bv eq1b 3d} implies:
\begin{align}
\|Dv_k\|_{L^1(\Omega)}\to\|v\|_{TV(\Omega)} \quad {\rm as}\,\,k\to\infty. \label{eq: PJ-bv eq3 3d}
\end{align}
On the other hand, applying Corollary \ref{cor: J lp continuity 3d} with $p=2$ and condition \eqref{eq: PJ-bv eq1a 3d}, we have:
\begin{align}
\|g_k-g\|_{L^2(\Omega)} = \|\abelJ(v_k-v)\|_{L^2(\Omega)} \le \dfrac{2}{\sqrt{\pi}}\|v_k-v\|_{L^2(\Omega)} \to0 \label{eq: PJ-bv eq4 3d}
\end{align}
as $k\to\infty$. Therefore, by Equations \eqref{eq: PJ-bv eq1 3d}-\eqref{eq: PJ-bv eq4 3d},
\begin{align*}
\|v\|_{L^2(\Omega)} &= \lim_{k\to\infty}\|v_k\|_{L^2(\Omega)} \\
&= \lim_{k\to\infty}C\|v_k\|_{L^\infty(\Omega)}^{1/3}\|Dv_k\|_{L^1(\Omega)}^{1/3}\|g_k\|_{L^2(\Omega)}^{1/3} \\
&\le C\|v\|_{L^\infty(\Omega)}^{1/3}\|v\|_{TV(\Omega)}^{1/3}\|g\|_{L^2(\Omega)}^{1/3},
\end{align*}
which gives Equation \eqref{eq: PJ bv bound 3d}.
\end{proof}


Using the same density argument, one can prove the following theorem from Equations \eqref{eq: PJ bv bound 3d} and \eqref{eq: r-bound to xy-bound 3d}, thus extending the $L^2$-stability estimate to Problem \eqref{eq: f=Au model}.


\begin{theorem} \label{thm: PA bv bound 3d}
Let $u:U\subset\R^3\to\R$ be an axisymmetric function such that, as a function of $(r,z)$, $u\in BV(\Omega)\cap L^\infty(\Omega)$ and $\supp(u)\subset[0,1)\times[-1,1]$. If $\abel u = f$, we have:
\begin{align}
\|u\|_{L^2(U)} &\le C\|u\|_{L^\infty(\Omega)}^{1/3}\|u\|_{TV(\Omega)}^{1/3}\|f\|_{L^2(\Omega)}^{1/3}, \label{eq: PA bv bound 3d}
\end{align}
where $C$ is a constant independent of $u$,
\begin{align*}
\|u\|_{L^2(U)} &:= \left(\int_{-1}^1\iint_{B(0,1)} |u(x,y,z)|^2\dd{x}\dd{y}\dd{z}\right)^{1/2}, 
\end{align*}
and $\|u\|_{TV(\Omega)}$ is defined by Equation \eqref{eq: def TV norm 3d}.
\end{theorem}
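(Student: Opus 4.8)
The plan is to reduce Theorem~\ref{thm: PA bv bound 3d} to the already-established $\abelJ$-estimate \eqref{eq: PJ bv bound 3d} through the radial substitution used throughout this subsection. First I would set $v(r^2,z) = u(r,z)$, so that by the identity $\abel u(x,z) = \sqrt{\pi}\,\abelJ v(x^2,z)$ the datum $g := \abelJ v$ satisfies $f(x,z) = \sqrt{\pi}\,g(x^2,z)$. Thus $v$ is precisely the $\abelJ$-solution associated with $g$, and the whole task becomes translating the four quantities appearing in \eqref{eq: PJ bv bound 3d}---namely $\|v\|_{L^2(\Omega)}$, $\|v\|_{L^\infty(\Omega)}$, $\|v\|_{TV(\Omega)}$, and $\|g\|_{L^2(\Omega)}$---back into the corresponding quantities for $u$ and $f$.

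Second, I would record the change-of-variables dictionary constituting Equation~\eqref{eq: r-bound to xy-bound 3d}, all obtained from the substitution $s=r^2$, $\dd{s}=2r\,\dd{r}$, together with polar integration on each slice $\{z=\text{const}\}$. Integrating $|u|^2$ in polar coordinates and substituting gives the exact identity $\|u\|_{L^2(U)} = \sqrt{\pi}\,\|v\|_{L^2(\Omega)}$. Since $r\mapsto r^2$ is a bijection of $[0,1]$, the supremum is unchanged, so $\|v\|_{L^\infty(\Omega)} = \|u\|_{L^\infty(\Omega)}$, and the support condition $\supp(v)\subset[0,1)\times[-1,1]$ is preserved. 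For the datum, writing $\|g\|_{L^2(\Omega)}^2 = \tfrac{1}{\pi}\iint_\Omega |f(\sqrt{s},z)|^2\,\dd{s}\,\dd{z} = \tfrac{2}{\pi}\iint_\Omega x\,|f(x,z)|^2\,\dd{x}\,\dd{z}$ and bounding $x\le 1$ on $\Omega$ yields $\|g\|_{L^2(\Omega)} \le \sqrt{2/\pi}\,\|f\|_{L^2(\Omega)}$.

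The delicate entry of the dictionary---and the step I expect to be the main obstacle---is the total-variation comparison. For a smooth representative, $\partial_s v = \tfrac{1}{2\sqrt{s}}\,\partial_r u$ while $\partial_z v = \partial_z u$, so substituting $s=r^2$ produces
\begin{align*}
\|v\|_{TV(\Omega)} = \iint_\Omega \sqrt{(\partial_r u)^2 + 4r^2(\partial_z u)^2}\,\dd{r}\,\dd{z} \le 2\,\|u\|_{TV(\Omega)},
\end{align*}
the key point being that the Jacobian factor $2r$ from $\dd{s}=2r\,\dd{r}$ exactly cancels the $\tfrac{1}{2\sqrt{s}}=\tfrac{1}{2r}$ blow-up in $\partial_s v$, and that $r\le 1$ on $\Omega$ controls the anisotropic rescaling of the $z$-derivative (using $\sqrt{a^2+4r^2b^2}\le 2\sqrt{a^2+b^2}$). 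The care here is that $s=r^2$ degenerates at the origin and $u$ is only $BV$, so this comparison is not immediate for the measure $Du$; I would make it rigorous exactly as in Theorem~\ref{thm: PJ bv bound 3d}, i.e.\ via a Lipschitz approximating sequence $u_k\to u$ with $\|u_k\|_{TV(\Omega)}\to\|u\|_{TV(\Omega)}$, establishing the inequality for each $u_k$ and passing to the limit.

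Finally, having verified that $v\in BV(\Omega)\cap L^\infty(\Omega)$ with the required support, I would apply Theorem~\ref{thm: PJ bv bound 3d} to $v$ and substitute the dictionary:
\begin{align*}
\|u\|_{L^2(U)} = \sqrt{\pi}\,\|v\|_{L^2(\Omega)} \le \sqrt{\pi}\,C\,\|v\|_{L^\infty(\Omega)}^{1/3}\|v\|_{TV(\Omega)}^{1/3}\|g\|_{L^2(\Omega)}^{1/3} \le C'\,\|u\|_{L^\infty(\Omega)}^{1/3}\|u\|_{TV(\Omega)}^{1/3}\|f\|_{L^2(\Omega)}^{1/3},
\end{align*}
which is \eqref{eq: PA bv bound 3d} after absorbing the numerical factors $\sqrt{\pi}$, $2^{1/3}$, and $(2/\pi)^{1/6}$ into the constant. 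The density passage needed to move from the Lipschitz estimate of Theorem~\ref{thm: PJ w11 bound 3d} to $BV$ data is already carried out inside Theorem~\ref{thm: PJ bv bound 3d}, so no new limiting argument is required beyond the one for the TV comparison above.
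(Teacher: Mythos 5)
Your proposal is correct and follows exactly the route the paper intends: the paper proves Theorem~\ref{thm: PA bv bound 3d} by combining the $BV$ estimate \eqref{eq: PJ bv bound 3d} with the change-of-variables dictionary \eqref{eq: r-bound to xy-bound 3d} (Proposition~\ref{prop: A-J norm}) and a density argument, which is precisely your reduction via $v(r^2,z)=u(r,z)$, the comparisons $\|u\|_{L^2(U)}=\sqrt{\pi}\,\|v\|_{L^2(\Omega)}$, $\|v\|_{TV(\Omega)}\le 2\|u\|_{TV(\Omega)}$, $\|g\|_{L^2(\Omega)}\le C\|f\|_{L^2(\Omega)}$, and Lipschitz approximation for the degenerate TV step. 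Your write-up in fact supplies more detail than the paper (which leaves this proof as a one-sentence remark), and your constant $\sqrt{2/\pi}$ in the data comparison is the sharp version of the paper's $\sqrt{2}$.
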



\subsection{Error Estimate for a TV Regularized Model}
\label{sec: error}

Theorems \ref{thm: PA bv bound 2d} and \ref{thm: PA bv bound 3d} provide an $L^2$-stability estimate for $BV$ solutions to Problem \eqref{eq: f=Au model} with control given by the $L^2$ norm of the data. Let $f_0$ be the unknown noise-free data (we can assume that $f_0\in BV$), and $f$ be the noisy data, where $f = f_0+\eta$, $\eta\sim\normal(0,\sigma^2)$. Let $u_0$ be a bounded $BV$ solution to Problem \eqref{eq: f=Au model}, {\it i.e.} $f_0=\abel u_0$, and $u^*$ be the unique bounded solution of Problem \eqref{eq: TV model}; the uniqueness is guaranteed by Theorem 3.1 in \cite{Acar94}. Define the Abel transform of $u^*$ as $f^*:=\abel u^*$. We have that $f^*\in L^2$ by, for example, Equation \eqref{eq: PJ-w11 eq7 3d}. Define the sets $\mathcal{S}_1$ and $\mathcal{S}_2$ as follows:
\begin{align*}
\mathcal{S}_1(c) &:= \{ u\in BV(0,1): \|u\|_{TV(0,1)}\leq c \text{ and }  \|\abel u\|_{L^2(0,1)} < \infty  \}, \\
\mathcal{S}_2(c,M) &:= \{ u\in BV(\Omega): \|u\|_{TV(\Omega)}\leq c, \ \ \|u\|_{L^\infty(\Omega)}\leq M, \text{ and } \|\abel u\|_{L^2(\Omega)} < \infty \}.
\end{align*}
Then the following corollaries are consequences of Theorems \ref{thm: PA bv bound 2d} and \ref{thm: PA bv bound 3d}, respectively, which provide an error estimate over the sets above. 


\begin{corollary} \label{cor: Ptv error 2d}
Assume that the data $f_0$ and $f$ are defined on $[0,1]\subset\R$, the conditions for $u^*$ and $u_0$ are as previously stated, and $u, u_0 \in \mathcal{S}_1(c)$ for some constant $c$. Then 
\begin{align*}
\|u^*-u_0\|_{L^2(B(0,1))} \le C \left(\|f^*-f\|_{L^2(0,1)} + \sigma\right)^{1/2},
\end{align*}
where $C$ is a constant depending on $\sqrt{c}$.
\end{corollary}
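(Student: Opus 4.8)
The plan is to exploit the linearity of the Abel transform and apply the stability estimate of Theorem \ref{thm: PA bv bound 2d} to the difference $w := u^* - u_0$. Since both $u^*$ and $u_0$ are axisymmetric, lie in $BV(0,1)$ as functions of $r$, and are supported in $[0,1)$, so is $w$; moreover $\abel w = \abel u^* - \abel u_0 = f^* - f_0$ by linearity. Theorem \ref{thm: PA bv bound 2d} then gives immediately
\begin{align*}
\|w\|_{L^2(B(0,1))} \le C\,\|w\|_{TV(0,1)}^{1/2}\,\|f^* - f_0\|_{L^2(0,1)}^{1/2}.
\end{align*}
It remains only to control the two factors on the right by the quantities appearing in the statement.

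For the total-variation factor, I would use the subadditivity of the $TV(0,1)$ seminorm together with the hypothesis $u^*, u_0 \in \mathcal{S}_1(c)$: namely $\|w\|_{TV(0,1)} \le \|u^*\|_{TV(0,1)} + \|u_0\|_{TV(0,1)} \le 2c$, so that this factor contributes a constant $\sqrt{2c}$ which can be absorbed into $C$ (yielding the claimed dependence on $\sqrt{c}$). For the data factor, I would insert the noisy data $f$ and apply the triangle inequality,
\begin{align*}
\|f^* - f_0\|_{L^2(0,1)} \le \|f^* - f\|_{L^2(0,1)} + \|f - f_0\|_{L^2(0,1)} = \|f^* - f\|_{L^2(0,1)} + \|\eta\|_{L^2(0,1)},
\end{align*}
using $f = f_0 + \eta$. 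Since the square root is monotone, combining these with the display above reduces the corollary to identifying $\|\eta\|_{L^2(0,1)}$ with $\sigma$.

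The one point requiring care --- and the main (mild) obstacle --- is precisely this passage from the $L^2$ norm of the random noise $\eta$ to the scalar $\sigma$. Since $\eta \sim \normal(0,\sigma^2)$, I would interpret the estimate in the root-mean-square sense: $\mathbb{E}\,\|\eta\|_{L^2(0,1)}^2 = \int_0^1 \mathbb{E}\,\eta(x)^2\,\dd{x} = \sigma^2$ on the unit-measure domain $[0,1]$, so that $\sigma$ plays the role of the expected $L^2$-size of the noise and $\|\eta\|_{L^2(0,1)}$ may be replaced by $\sigma$ in the bound. With this identification, substituting $\|w\|_{TV(0,1)}^{1/2}\le (2c)^{1/2}$ and the data estimate into the stability inequality, and redefining $C$ to include the factor $(2c)^{1/2}$, produces
\begin{align*}
\|u^* - u_0\|_{L^2(B(0,1))} \le C\big(\|f^* - f\|_{L^2(0,1)} + \sigma\big)^{1/2},
\end{align*}
with $C$ depending on $\sqrt{c}$, as claimed.
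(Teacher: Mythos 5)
Your proposal is correct and follows essentially the same route as the paper's own proof: apply Theorem \ref{thm: PA bv bound 2d} to the difference $u^*-u_0$ (using linearity of $\abel$), bound its total variation by $2c$ via subadditivity so the factor $\sqrt{2c}$ is absorbed into $C$, and split $\|f^*-f_0\|_{L^2(0,1)}$ by the triangle inequality before identifying $\|f-f_0\|_{L^2(0,1)}$ with $\sigma$. Your explicit justification for replacing $\|\eta\|_{L^2(0,1)}$ by $\sigma$ in the root-mean-square sense on the unit-measure domain is a point the paper glosses over here and only spells out in the 3D analogue, Corollary \ref{cor: Ptv error 3d}, where the domain measure produces the factor $\sqrt{2}$.
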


\begin{proof}
By assumption:
\begin{align*}
\|u^*\|_{TV(0,1)},\|u_0\|_{TV(0,1)}\le c.
\end{align*} 
Then by Theorem \ref{thm: PA bv bound 2d},
\begin{align*}
\|u^*-u_0\|_{L^2(B(0,1))} &\le C\|u^*-u_0\|_{TV(0,1)}^{1/2}\|\abel(u^*-u_0)\|_{L^2(0,1)}^{1/2} \\
&\le C\sqrt{2c} \|f^*-f_0\|_{L^2(0,1)}^{1/2} \\
&\le C\sqrt{2c} \left(\|f^*-f\|_{L^2(0,1)} + \|f-f_0\|_{L^2(0,1)} \right)^{1/2} \\
&= C\sqrt{2c} \left(\|f^*-f\|_{L^2(0,1)} + \sigma \right)^{1/2}.
\end{align*}
\end{proof}


\begin{corollary} \label{cor: Ptv error 3d}
Assume that the data $f_0$ and $f$ are defined on $\Omega\subset\R^2$, the conditions for $u^*$ and $u_0$ are as previously stated, and $u, u_0 \in \mathcal{S}_2(c,M)$ for some constants $c$ and $M$. Then 
\begin{align*}
\|u^*-u_0\|_{L^2(U)} \le C \left(\|f^*-f\|_{L^2(\Omega)} + \sqrt{2}\sigma\right)^{1/3},
\end{align*}
where $C$ is a constant depending on $(cM)^{1/3}$.
\end{corollary}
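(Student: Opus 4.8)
The plan is to follow the proof of Corollary~\ref{cor: Ptv error 2d} essentially verbatim, substituting the three-dimensional stability estimate of Theorem~\ref{thm: PA bv bound 3d} for the two-dimensional one and carrying along the additional $L^\infty$ factor. The structural fact that makes this work is that $\abel$ is linear, so the difference $w := u^* - u_0$ satisfies $\abel w = f^* - f_0$, and the membership $u^*, u_0 \in \mathcal{S}_2(c,M)$ supplies precisely the two a priori ingredients --- a total variation bound and an $L^\infty$ bound --- that Theorem~\ref{thm: PA bv bound 3d} demands of its argument.

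First I would apply the triangle inequality to $w$ in both the $TV$ and $L^\infty$ norms. Since $\|u^*\|_{TV(\Omega)}, \|u_0\|_{TV(\Omega)} \le c$ and $\|u^*\|_{L^\infty(\Omega)}, \|u_0\|_{L^\infty(\Omega)} \le M$, this gives $\|w\|_{TV(\Omega)} \le 2c$ and $\|w\|_{L^\infty(\Omega)} \le 2M$. Applying Theorem~\ref{thm: PA bv bound 3d} to $w$ then yields
\begin{align*}
\|u^*-u_0\|_{L^2(U)} &\le C\|w\|_{L^\infty(\Omega)}^{1/3}\|w\|_{TV(\Omega)}^{1/3}\|\abel w\|_{L^2(\Omega)}^{1/3} \\
&\le C(2M)^{1/3}(2c)^{1/3}\|f^*-f_0\|_{L^2(\Omega)}^{1/3} = C(4cM)^{1/3}\|f^*-f_0\|_{L^2(\Omega)}^{1/3},
\end{align*}
which already exhibits the advertised dependence of the constant on $(cM)^{1/3}$.

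Next I would split the data misfit by the triangle inequality, $\|f^*-f_0\|_{L^2(\Omega)} \le \|f^*-f\|_{L^2(\Omega)} + \|f-f_0\|_{L^2(\Omega)}$, and identify the second term with the noise level. Since $f = f_0 + \eta$ with $\eta \sim \normal(0,\sigma^2)$ and the data domain $\Omega = [0,1]\times[-1,1]$ has measure $2$, one has $\|\eta\|_{L^2(\Omega)} = \sqrt{2}\,\sigma$ (in the root-mean-square sense), in contrast to the factor $\sigma$ arising in the one-dimensional setting of Corollary~\ref{cor: Ptv error 2d}, where $[0,1]$ has measure $1$. Substituting produces $\|u^*-u_0\|_{L^2(U)} \le C(\|f^*-f\|_{L^2(\Omega)} + \sqrt{2}\,\sigma)^{1/3}$ with $C$ depending on $(cM)^{1/3}$, as claimed.

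This corollary is largely a bookkeeping consequence of the stability theorem, so there is no deep obstacle; the two points that require genuine care are minor. The first is verifying that $w = u^* - u_0$ actually satisfies the full hypotheses of Theorem~\ref{thm: PA bv bound 3d} --- namely $w \in BV(\Omega)\cap L^\infty(\Omega)$ with $\supp(w)\subset[0,1)\times[-1,1]$, all of which are inherited from $u^*$ and $u_0$ under the previously stated conditions --- so that the estimate applies to the difference rather than to each function individually. The second is the bookkeeping of the $L^\infty$ factor through the cube-root exponent and the observation that the measure-$2$ data domain converts the pointwise noise level $\sigma$ into $\sqrt{2}\,\sigma$; these are the only features distinguishing the argument from its two-dimensional counterpart.
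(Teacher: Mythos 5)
Your proposal is correct and follows essentially the same argument as the paper's own proof: apply Theorem~\ref{thm: PA bv bound 3d} to the difference $u^*-u_0$ (using linearity of $\abel$ and the triangle inequality in the $TV$ and $L^\infty$ norms to obtain the $(4cM)^{1/3}$ factor), then split $\|f^*-f_0\|_{L^2(\Omega)}$ and identify $\|f-f_0\|_{L^2(\Omega)}=\sqrt{2}\,\sigma$ from the measure-$2$ domain. The additional hypotheses you flag (support and $BV\cap L^\infty$ membership of the difference) are exactly what the paper implicitly relies on, so there is no gap.
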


\begin{proof}
Note that since the size of the domain $\Omega$ is equal to 2, we have $\|f-f_0\|_{L^2(\Omega)}^2 = 2\sigma^2$. By assumption:
\begin{align*}
\|u^*\|_{TV(\Omega)},\|u_0\|_{TV(\Omega)}\le c, \quad \text{and} \quad \|u^*\|_{L^\infty(\Omega)},\|u_0\|_{L^\infty(\Omega)}\le M.
\end{align*}
Then by Theorem \ref{thm: PA bv bound 3d},
\begin{align}
\|u^*-u_0\|_{L^2(U)} & \le C\|u^*-u_0\|_{L^\infty(\Omega)}^{1/3}\|u^*-u_0\|_{TV(\Omega)}^{1/3}\|\abel(u^*-u_0)\|_{L^2(\Omega)}^{1/3} \nonumber \\
&\le C(4cM)^{1/3} \|f^*-f_0\|_{L^2(\Omega)}^{1/3} \nonumber \\
&\le C(4cM)^{1/3} \left(\|f^*-f\|_{L^2(\Omega)} + \|f-f_0\|_{L^2(\Omega)} \right)^{1/3} \nonumber \\
&= C(4cM)^{1/3} \left(\|f^*-f\|_{L^2(\Omega)} + \sqrt{2}\sigma \right)^{1/3}. \label{eq: Ptv error eq1 3d}
\end{align}
\end{proof}


\begin{remark} \label{rem: Ptv convergence}
Theorem 5.1 in \cite{Acar94} provides a convergence result for the solutions to a sequence of perturbed linear inverse problems. In particular, for 2D axisymmetric solutions, assume that the data $f_0$ is defined on $[0,1]\subset\R$. Let $\{f_k\}_{k=1}^\infty$ be a sequence of perturbed data, where $f_k = f_0+\eta_k$, $\eta_k\sim\normal(0,\sigma_k^2)$. Let $\{u_k\}_{k=1}^\infty$ be the solutions obtained by minimizing:
\begin{align*}
\|u\|_{TV(0,1)} + \dfrac{\lambda_k}{2}\|\abel u-f_k\|_{L^2(0,1)}^2
\end{align*}
over $u\in BV(0,1)$. Suppose $\|f_k-f\|_{L^2(0,1)}\to0$, and $\lambda_k\to\infty$ at a rate such that $\lambda_k\|\abel u_0-f_k\|_{L^2(0,1)}^2$ remains bounded. Then $u_k\to u_0$ strongly in $L^2$.
And for 3D axisymmetric solutions, assume that the data $f_0$ is defined on $\Omega\subset\R^2$. Let $\{f_k\}_{k=1}^\infty$ be defined as before. Let $\{u_k\}_{k=1}^\infty$ be the solutions obtained by minimizing:
\begin{align*}
\|u\|_{TV(\Omega)} + \dfrac{\lambda_k}{2}\|\abel u-f_k\|_{L^2(\Omega)}^2
\end{align*}
over $u\in BV(\Omega)$. Suppose $\|f_k-f\|_{L^2(\Omega)}\to0$, and $\lambda_k\to\infty$ at a rate such that $\lambda_k\|\abel u_0-f_k\|_{L^2(\Omega)}^2$ remains bounded. Then $u_k\rightharpoonup u_0$ weakly in $L^2$.
\end{remark}


\section{Examples}
\label{sec: example}

In this section, two numerical examples are detailed and used to verify the theory from Section \ref{sec: theory}. In each case, an approximation is obtain be solving  Problem \eqref{eq: TV model} in the presence of additive Gaussian noise and the error bounds are verified numerically.

We consider two synthetic axisymmetric density functions which are compactly supported in the cylindrical domain $U$. Let $U_h$ and $V_h$ be discretization of $[-1,1]\times[-1,1]\times[0,1]\subset\R^3$ and $\Omega$, respectively, with grid-spacing $h$ equal to $1/128$. To solve Problem \eqref{eq: TV model} numerically, consider the following discrete minimization problem:
\begin{align}
\min_{u\in V_h}\quad & \|\nabla_h u\|_{\ell^1(V_h\times V_h)} + \dfrac{\lambda}{2}\|Au-f\|_{\ell^2(V_h)}^2, \label{eq: discrete TV model}\tag{\mbox{P$_{{\rm TV},h}$}}
\end{align}
which can be solved via the primal-dual algorithm \cite{Chambolle11}. Further details about the discretization and the numerical method can be found in Appendix \ref{sec: method}.

In both cases, we consider piecewise constant densities $u_0$. Figure \ref{fig: 1a} shows the level sets of the density along with a planar slice. Each of the level sets have a rough-boundary; however, the function is still in $BV$. The ``observed'' data $f$ is given in Figure \ref{fig: 1b}, where $f=f_0+\eta$, $f_0=\abel u_0$, $\eta\sim\normal(0,\sigma^2)$, and $\sigma^2=0.05\%\times\|f_0\|_{L^\infty(U)}$. Figure \ref{fig: 1c} displays the approximate solution $u^*$ which is the discrete minimizer of Problem \eqref{eq: discrete TV model} given measured data $f$ as shown in Figure \ref{fig: 1b}. It can be seen that the boundaries between constant density regions are well-recovered, except near the origin. This is due to high-variations near the origin which are penalized (strongly) by the TV semi-norm.  In Figure \ref{fig: 1d}, we display the approximate solution $u^*$ corresponding to a lower noise level, \textit{i.e.} $\sigma^2 = 0.01\%\times\|f_0\|_{L^\infty(U)}$. As the noise decreases, the level sets become better-resolved.

%

\begin{figure}[b!]
  \centering
  \subfloat[$u_0=u_0(x,y,z)$\label{fig: 1a}]{\includegraphics[scale=0.18]{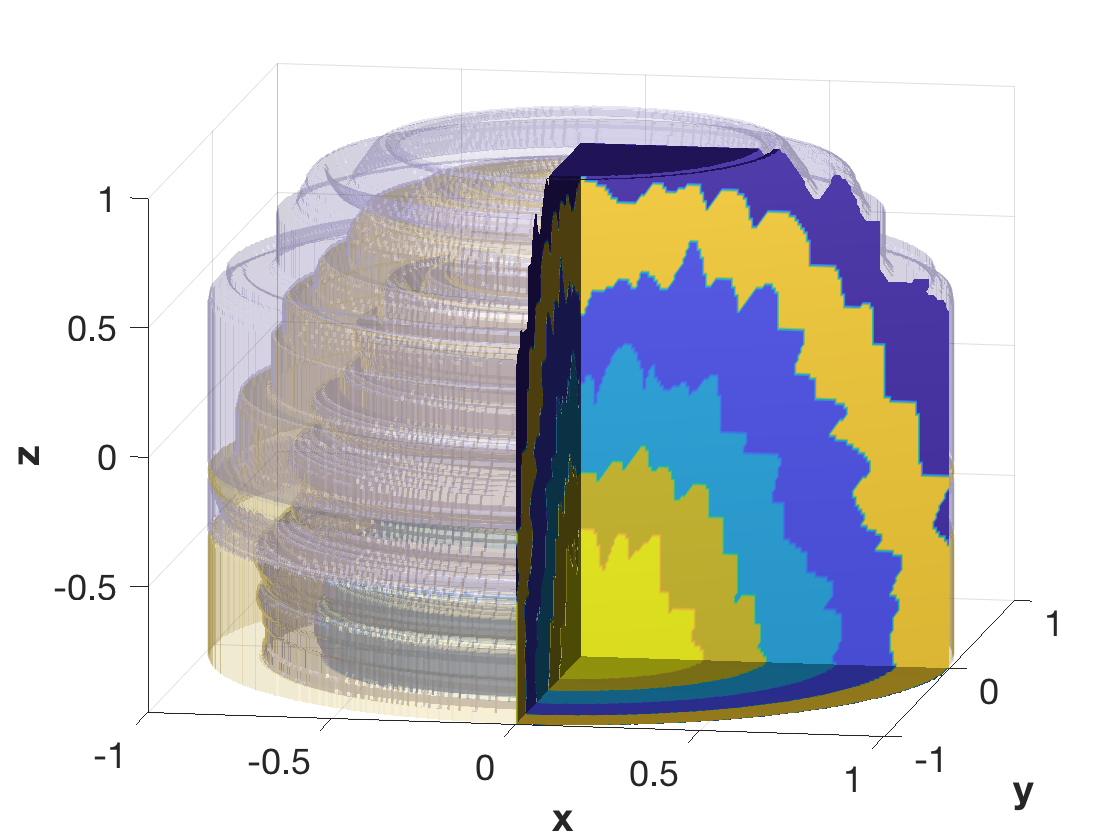}}
  \subfloat[$f$ with $\sigma^2=0.05\%\times\|f_0\|_{L^\infty(U)}$\label{fig: 1b}]{\includegraphics[scale=0.18]{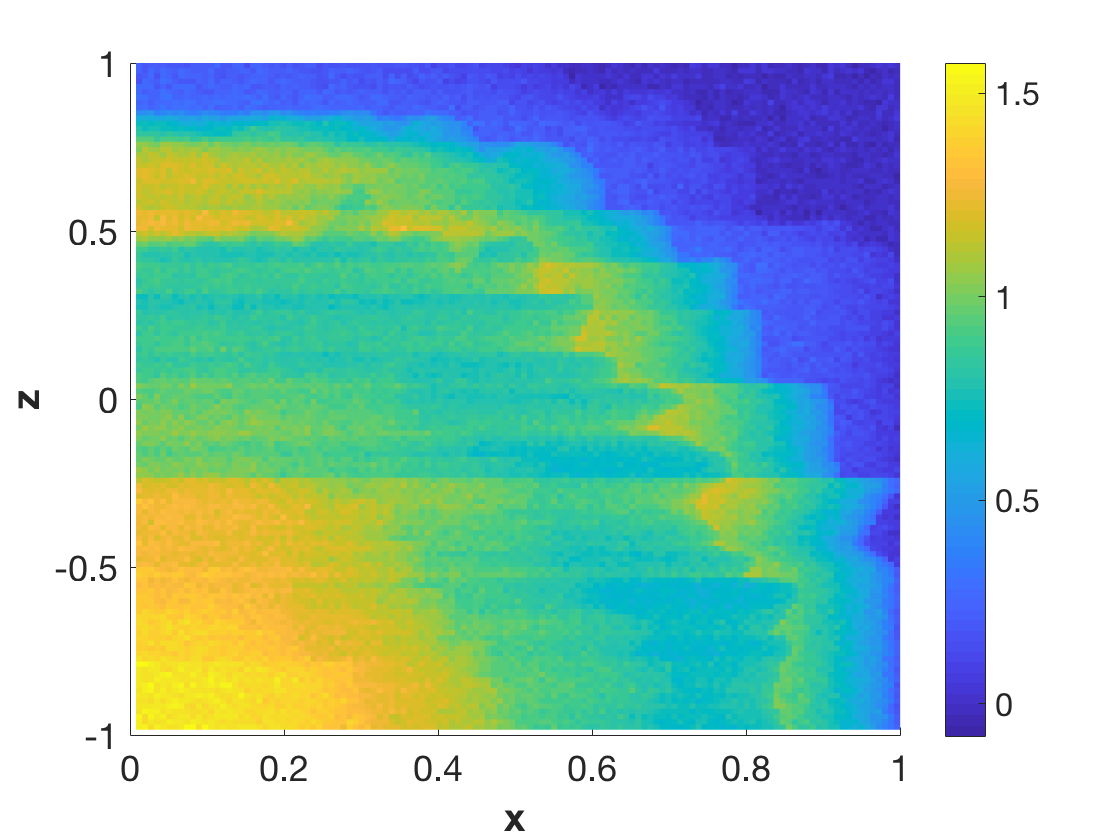}}
  
  \subfloat[$u^*=u^*(x,y,z)$, $\sigma^2=0.05\%\times\|f_0\|_{L^\infty(U)}$\label{fig: 1c}]{\includegraphics[scale=0.18]{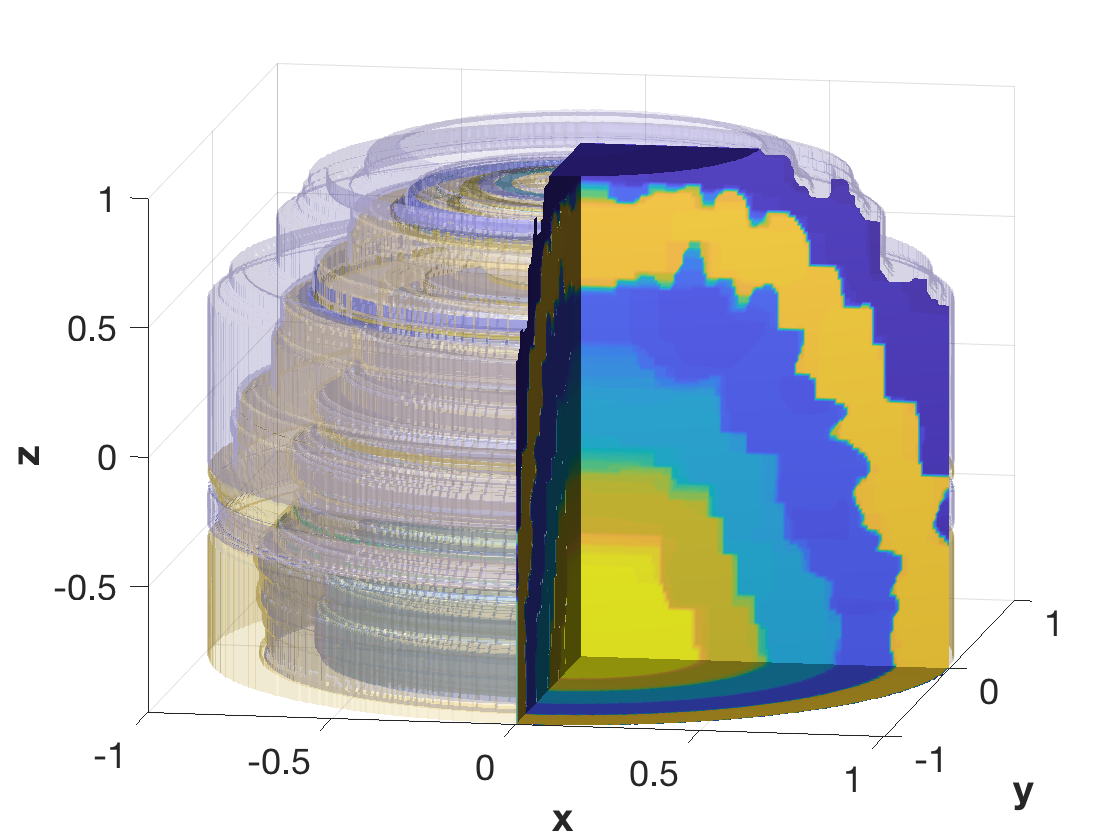}}
  \subfloat[$u^*=u^*(x,y,z)$, $\sigma^2=0.01\%\times\|f_0\|_{L^\infty(U)}$\label{fig: 1d}]{\includegraphics[scale=0.18]{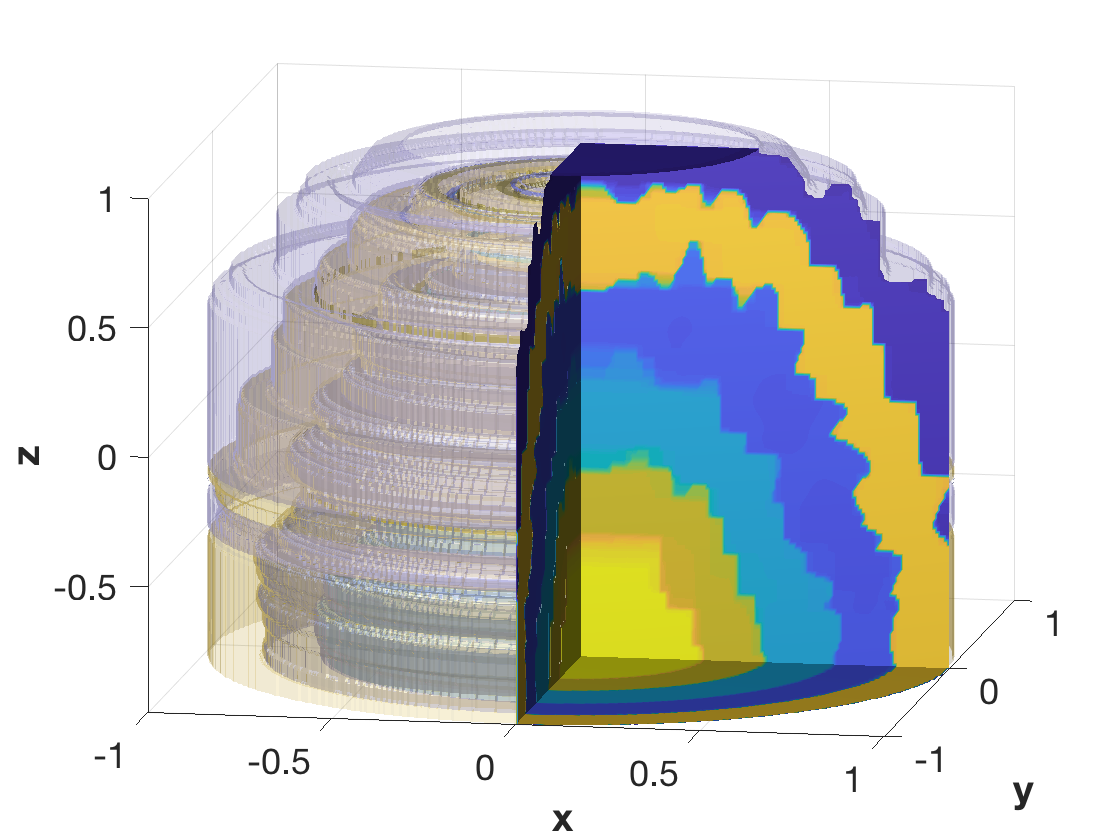}}
  \caption{Example 1: (a)  Level sets and planar slice of the original density $u_0$,  (b) the noisy observation $f$, where $f=\abel u_0+\eta$, $\eta\sim\normal(0,\sigma^2)$, (c-d) recovered data using Problem \eqref{eq: discrete TV model} when the variance of the noise $\sigma^2$ is $0.05\%\times\|f_0\|_{L^\infty(U)}$ and $0.01\%\times\|f_0\|_{L^\infty(U)}$ respectively. }
  \label{fig: ex1}
\end{figure}

For the second example, we consider a piecewise constant density with four disjoint topological components. Figure \ref{fig: 2a} shows the level sets of the original density $u_0$. The noisy ``observed'' data $f$ is given in Figure \ref{fig: 2b}, where $f=f_0+\eta$, $f_0=\abel u_0$, $\eta\sim\normal(0,\sigma^2)$, and $\sigma^2=0.05\%\times\|f_0\|_{L^\infty(U)}$. Figure \ref{fig: 2c} and \ref{fig: 2d}  display the approximate solution $u^*$ which is the discrete minimizer of Problem \eqref{eq: discrete TV model} given noise level $\sigma^2=0.05\%\times\|f_0\|_{L^\infty(U)}$ and $\sigma^2=0.002\%\times\|f_0\|_{L^\infty(U)}$, respectively. As the noise level decreases, the high-curvature regions (the lower tip of the yellow and blue components) are better-resolved.

%

\begin{figure}[b!]
  \centering
  \subfloat[$u_0=u_0(x,y,z)$\label{fig: 2a}]{\includegraphics[scale=0.18]{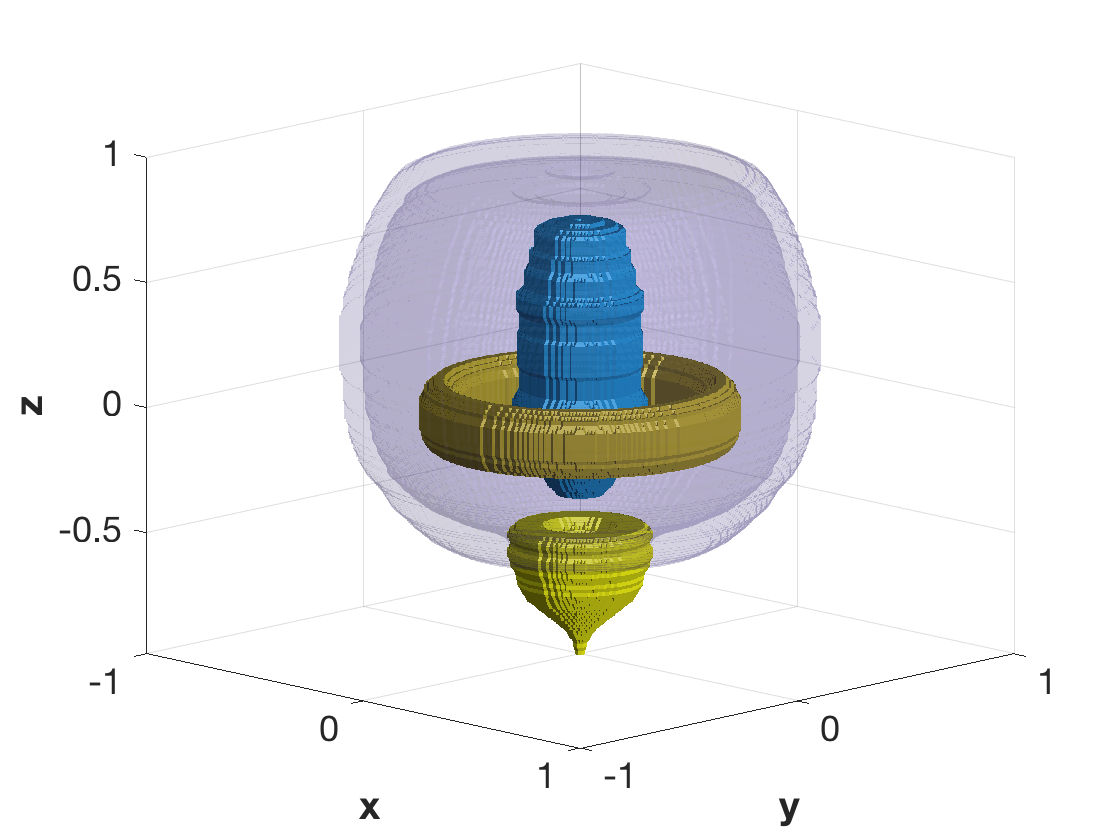}}
  \subfloat[$f$ with $\sigma^2= 0.05\%\times\|f_0\|_{L^\infty(U)}$\label{fig: 2b}]{\includegraphics[scale=0.18]{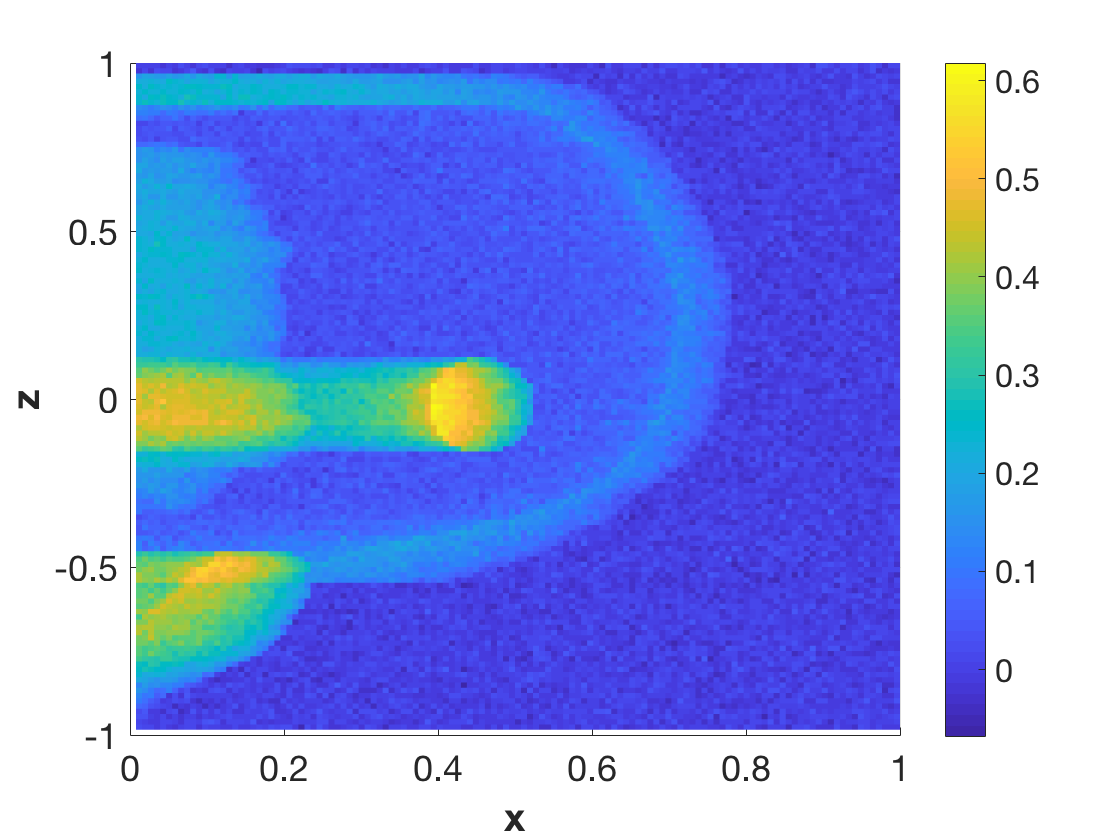}}
  
  \subfloat[$u^*=u^*(x,y,z)$, $\sigma^2=0.05\%\times\|f_0\|_{L^\infty(U)}$\label{fig: 2c}]{\includegraphics[scale=0.18]{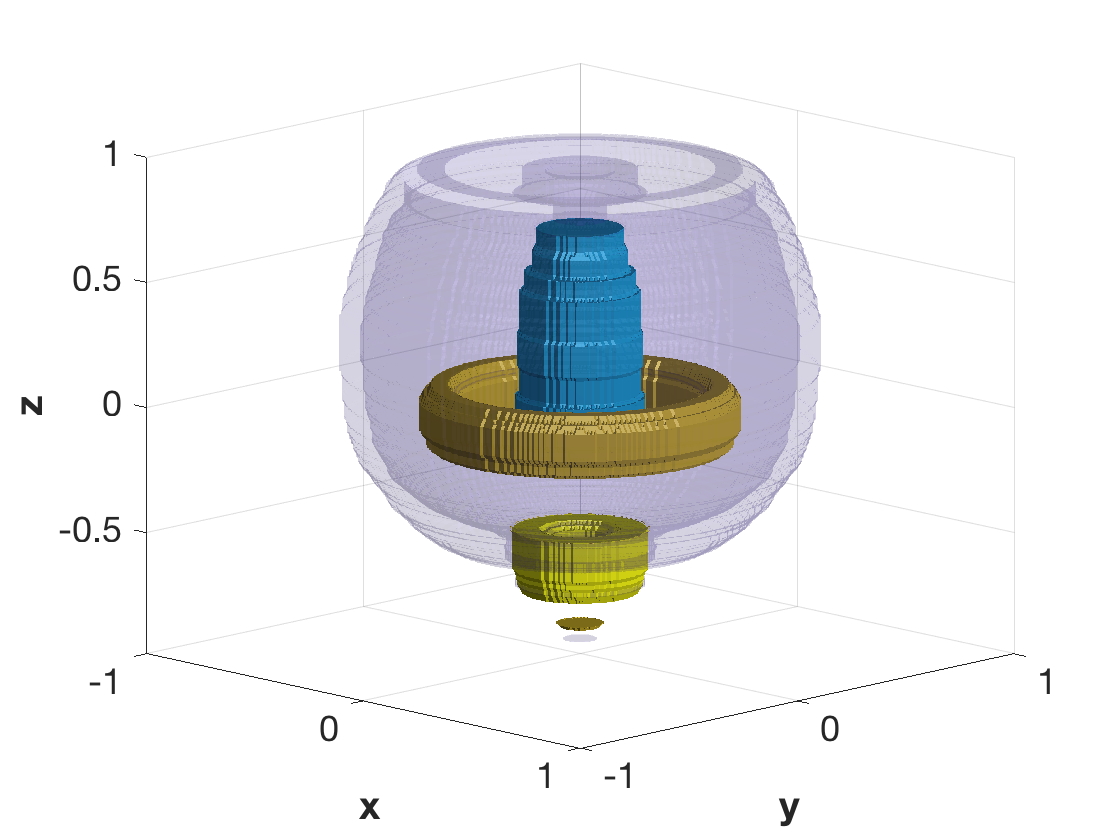}} 
  \subfloat[$u^*=u^*(x,y,z)$, $\sigma^2=0.002\%\times\|f_0\|_{L^\infty(U)}$\label{fig: 2d}]{\includegraphics[scale=0.18]{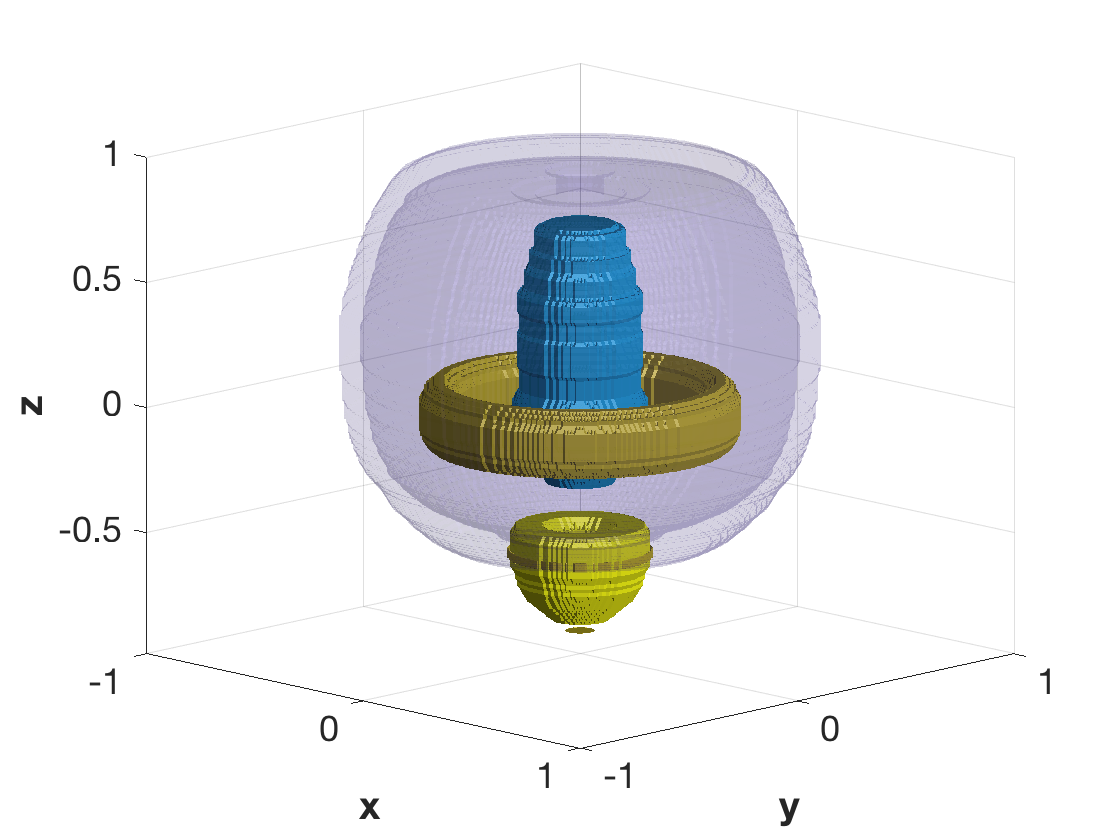}}
  \caption{Example 2: (a)  Level sets and planar slice of the original density $u_0$,  (b) the noisy observation $f$, where $f=\abel u_0+\eta$, $\eta\sim\normal(0,\sigma^2)$, (c-d) recovered data using Problem \eqref{eq: discrete TV model} when the variance of the noise $\sigma^2$ is $0.05\%\times\|f_0\|_{L^\infty(U)}$ and $0.002\%\times\|f_0\|_{L^\infty(U)}$ respectively. }
  \label{fig: ex2}
\end{figure}

For each of the examples, we solve Problem \eqref{eq: discrete TV model} with difference $\sigma$ values. The parameters used in the computational experiments are listed in Table \ref{tab: parameter}.

\begin{table}[b!]
  \caption{Parameters corresponding to Examples 1 and 2, Figures~\ref{fig: ex1} and ~\ref{fig: ex2}, respectively.}
  \label{tab: parameter}
  \centering
  \subfloat[Example 1]{
  \begin{tabular}{|c|c|c|c|c|} \hline
  Parameters of the data & \multicolumn{4}{c|}{Parameters of the algorithm} \\ \hline
   $\sigma^2$ & $\lambda$ & $\tau$ & $\gamma$ & Total iterations \\ \hline
   $0.25\%\times\|f_0\|_{L^\infty(U)}$ & 50 & 0.2 & 0.2 & 5000 \\ \hline
   $0.05\%\times\|f_0\|_{L^\infty(U)}$ & 80 & 0.2 & 0.2 & 5000 \\ \hline
   $0.01\%\times\|f_0\|_{L^\infty(U)}$ & 120 & 0.2 & 0.2 & 5000 \\ \hline
   $0.002\%\times\|f_0\|_{L^\infty(U)}$ & 170 & 0.2 & 0.2 & 5000 \\ \hline
  \end{tabular}}\\
  \subfloat[Example 2]{
  \begin{tabular}{|c|c|c|c|c|} \hline
  Parameters of the data & \multicolumn{4}{c|}{Parameters of the algorithm} \\ \hline
   $\sigma^2$ & $\lambda$ & $\tau$ & $\gamma$ & Total iterations \\ \hline
   $0.25\%\times\|f_0\|_{L^\infty(U)}$ & 60 & 0.4 & 0.4 & 5000 \\ \hline
   $0.05\%\times\|f_0\|_{L^\infty(U)}$ & 90 & 0.4 & 0.4 & 5000 \\ \hline
   $0.01\%\times\|f_0\|_{L^\infty(U)}$ & 150 & 0.2 & 0.2 & 5000 \\ \hline
   $0.002\%\times\|f_0\|_{L^\infty(U)}$ & 180 & 0.2 & 0.2 & 5000 \\ \hline
  \end{tabular}}
\end{table}

To verify the error bound from Section \ref{sec: theory}, define the following discrete quantities:
\begin{align*}
c &:= \max\left\{\|\nabla_h u^*\|_{\ell^1(V_h\times V_h)},\|\nabla_h u_0\|_{\ell^1(V_h\times V_h)}\right\}, \\
M &:= \max\left\{\|u^*\|_{\ell^\infty(V_h)},\|u_0\|_{\ell^\infty(V_h)}\right\}, \\
M_1 &:= \left(\|f^*-f\|_{\ell^2(V_h)} + \|f-f_0\|_{\ell^2(V_h)} \right)^{1/3}, \\
C^* &:= \dfrac{\|u^*-u_0\|_{\ell^2(U_h)}}{M_1\, (4cM)^{1/3}}.
\end{align*}
Note that, in practice, an upper bound of $\|f-f_0\|_{\ell^2(V_h)}$ could be estimated from the data without knowledge of $f_0$. The values used for error estimate of each experiment are listed in Table \ref{tab: result}. From Equations \eqref{eq: PJ-w11 eq8 3d}, \eqref{eq: PA bv bound 3d}, \eqref{eq: Ptv error eq1 3d}, and \eqref{eq: r-bound to xy-bound 3d}, it is expected that $C^*\leq 1.07$. This is in fact the case numerically, thereby providing additional support for Corollary \ref{cor: Ptv error 3d}. Moreover, from Tables \ref{tab: parameter} and \ref{tab: result}, it can be seen that by choosing the parameter $\lambda$, the quantity $\|u^*-u_0\|_{\ell^2(U_h)}$ can be made decreasing as $\sigma$ decreases. This provides numerical support for Remark \ref{rem: Ptv convergence}. Lastly, it is worth noting that the numerical experiments suggest better control of the error than what was shown theoretically. 

%
%

\begin{table}[b!]
  \caption{Discrete quantities used to verify the error bound on Examples 1 and 2, Figures~\ref{fig: ex1} and ~\ref{fig: ex2}, respectively.}
  \label{tab: result}
  \centering
  
  \subfloat[Example 1]{
  \begin{tabular}{|c|c|c|c|c|c|c|} \hline
  $\sigma^2$ & $\|u^*-u_0\|_{\ell^2(U_h)}$ & $\|f^*-f\|_{\ell^2(V_h)}$ & $M_1$ & $c$ & $M$ & $C^*$ \\ \hline
  $0.25\%\times\|f_0\|_{L^\infty(U)}$ & 0.0914 & 0.0606 & 0.4961 & 0.0285 & 1 & 0.3796 \\ \hline
  $0.05\%\times\|f_0\|_{L^\infty(U)}$ & 0.0596 & 0.0271 & 0.3782 & 0.0285 & 1 & 0.3249 \\ \hline
  $0.01\%\times\|f_0\|_{L^\infty(U)}$ & 0.0378 & 0.0126 & 0.2917 & 0.0285 & 1 & 0.2669 \\ \hline
  $0.002\%\times\|f_0\|_{L^\infty(U)}$ & 0.0317 & 0.0064 & 0.2278 & 0.0285 & 1 & 0.2867 \\ \hline
  \end{tabular}
  }
  
  \subfloat[Example 2]{
  \begin{tabular}{|c|c|c|c|c|c|c|} \hline
  $\sigma^2$ & $\|u^*-u_0\|_{\ell^2(U_h)}$ & $\|f^*-f\|_{L^2(\Omega)}$ & $M_1$ & $c$ & $M$ & $C^*$ \\ \hline
  $0.25\%\times\|f_0\|_{L^\infty(U)}$ & 0.0518 & 0.0389 & 0.4268 & 0.0194 & 1 & 0.2845 \\ \hline
  $0.05\%\times\|f_0\|_{L^\infty(U)}$ & 0.0363 & 0.0177 & 0.3269 & 0.0194 & 1 & 0.2601 \\ \hline
  $0.01\%\times\|f_0\|_{L^\infty(U)}$ & 0.0270 & 0.0086 & 0.2537 & 0.0194 & 1 & 0.2490 \\ \hline
  $0.002\%\times\|f_0\|_{L^\infty(U)}$ & 0.0201 & 0.0044 & 0.1989 & 0.0194 & 1 & 0.2368 \\ \hline
  \end{tabular}
}
\end{table}


\section{Discussion}
\label{sec: conclusion}

In this work, the problem of recovering a $BV$ function from its Abel projection is analyzed. The difficulty in this problem is related to the the ill-conditioning of
the Abel inverse problem \eqref{eq: f=Au model} and the influence of noise, which is handled through a TV regularized model \eqref{eq: TV model}. We provide $L^2$-stability estimates for $BV$ solutions to Problem \eqref{eq: f=Au model} and error bounds from minimizers of Problem \eqref{eq: TV model}.  Additionally, numerical examples in three dimensions verify the theoretical results. These results provide theoretical guarantees on the recovery of data from (noisy) line-of-sight projections. 

In the future, we would like to generalize the theoretical results and derive optimal bounds. The theory provided in Section \ref{sec: theory} could be modified to provide estimates for other integral equations related to line-of-sight projections.  The stability bounds found in Section \ref{sec: theory} are sub-linear, and based on numerical observations,  may not be optimal. We are interested in improving, for example, the $1/3$ exponent in Equation \eqref{eq: PJ w11 bound 3d}. In addition, it would be worth investigating approximations of Problem \eqref{eq: f=Au model} with other variational models with linear-growth conditions on the gradient. Recovery guarantees of variational methods over $BV$ functions should follow from the analysis presented in this work.


\appendix


\section{$L^1$-Stability Estimates for $BV$ Solutions}
\label{sec: L1 bound}

In this section, we provide $L^1$-stability estimates for $BV$ solutions to Problems \eqref{eq: f=Au model} and \eqref{eq: g=Jv model}.


\begin{lemma} \label{lem: basic bounds 2d l1}
Let $v\in W^{1,1}(0,1)$. Let $h\in(0, 1/2]$ and define $v_h$ by Equation \eqref{eq: running average 2d}. Then the following estimate holds:
\begin{align}
\|v-v_h\|_{L^1(h,1)} \le 2^{-1}h\|v'\|_{L^1(0,1)} \label{eq: basic bound 2d l1}
\end{align}
\end{lemma}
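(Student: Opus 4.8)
The plan is to mirror the proof of Lemma \ref{lem: basic bounds 2d}, replacing the $L^2$ bound on the convolution kernel by its $L^1$ counterpart. The essential point is that the convolution representation established in the proof of that lemma does not depend on the exponent in which one measures the error, so it can be reused here without change. Specifically, I would recall from Equation \eqref{eq: basic bound eq1 2d} that, for $x\in[h,1]$, one has $v_h - v = K*g$, where
\begin{align*}
K(x) := \left(\frac{x}{h} - 1\right)\1_{[0,h]}(x), \quad g(x) := v'(x)\1_{[0,1]}(x),
\end{align*}
with $K$ and $g$ extended to all of $\R$. Since $v\in W^{1,1}(0,1)$ is absolutely continuous, this identity holds by the fundamental theorem of calculus exactly as in the $L^2$ case.

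Next, instead of applying Young's inequality in the form $\|K*g\|_{L^2}\le\|K\|_{L^2}\|g\|_{L^1}$, I would invoke the $L^1$-version, namely $\|K*g\|_{L^1(\R)}\le\|K\|_{L^1(\R)}\|g\|_{L^1(\R)}$. Restricting the convolution from $\R$ to the subinterval $[h,1]$ only decreases the $L^1$ norm, so that
\begin{align*}
\|v - v_h\|_{L^1(h,1)} \le \|K*g\|_{L^1(\R)} \le \|K\|_{L^1(\R)}\|g\|_{L^1(\R)}.
\end{align*}

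It then remains only to evaluate the kernel norm. Since $K$ is supported on $[0,h]$ with $|K(x)| = 1 - x/h$ there, a direct calculation gives $\|K\|_{L^1(\R)} = \int_0^h (1 - x/h)\dd{x} = h/2$, while $\|g\|_{L^1(\R)} = \|v'\|_{L^1(0,1)}$; combining these yields Equation \eqref{eq: basic bound 2d l1}. I do not anticipate a genuine obstacle here: the argument is entirely parallel to Lemma \ref{lem: basic bounds 2d}, and the only substantive change is computing $\|K\|_{L^1(\R)}$ in place of $\|K\|_{L^2(\R)}$. The sharper, \emph{linear} power of $h$ (rather than the square-root appearing in Equation \eqref{eq: basic bound 2d}) arises precisely from this change of exponent in the kernel norm.
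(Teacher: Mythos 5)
Your proposal is correct and follows exactly the paper's own argument: reuse the convolution identity $v_h - v = K*g$ from the proof of Lemma \ref{lem: basic bounds 2d}, apply Young's inequality in the $L^1$--$L^1$ form, and compute $\|K\|_{L^1(\R)} = h/2$, which yields the stated bound. Your kernel-norm calculation and the restriction step $\|K*g\|_{L^1(h,1)} \le \|K*g\|_{L^1(\R)}$ both match the paper.
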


\begin{proof}
We have shown in the proof of Lemma \ref{lem: basic bounds 2d} that $v_h - v = K*g$ on $[h,1]$, where
\begin{align*}
K(x) := \left(\dfrac{x}{h} -1\right)\1_{[0,h]}(x), \quad g(x) := v'(x)\mathds{1}_{[0,1]}(x).
\end{align*}
Extending the functions to $\R$ and applying Young's inequality for convolutions, we obtain:
\begin{align*}
\|v-v_h\|_{L^1(h,1)} = \|K*g\|_{L^1(h,1)} \leq \|K*g\|_{L^1(\R)} \le \|K\|_{L^1(\R)}\|g\|_{L^1(\R)} = 2^{-1}h\|v'\|_{L^1(0,1)},
\end{align*}
where the last equality can be calculated directly. This shows Equation \eqref{eq: basic bound 2d l1}.
\end{proof}


\begin{theorem} \label{thm: PJ w11 bound 2d l1}
If $v\in W^{1,1}(0,1)$ with $\supp(v)\subset[0,1)$ and $\abelJ v = g$, we have:
\begin{align}
\|v\|_{L^1(0,1)} \le C\|v'\|_{L^1(0,1)}^{1/3}\|g\|_{L^1(0,1)}^{2/3}, \label{eq: PJ w11 bound 2d l1}
\end{align}
where $C$ is a constant independent of $v$.
\end{theorem}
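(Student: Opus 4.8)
The plan is to follow the proof of Theorem \ref{thm: PJ w11 bound 2d} essentially line by line, replacing every $L^2$ norm on the data side by an $L^1$ norm and keeping careful track of how the powers of $h$ change. The backbone is untouched: the identity $\abelJ^2 v(x)=\int_x^1 v(y)\,\dd{y}$ from Equation \eqref{eq: PJ-w11 eq1 2d} still holds, so the computation culminating in Equation \eqref{eq: PJ-w11 eq2 2d} again gives the decomposition $\sqrt{\pi}\,h\,v_h(x)=F_1(x)+F_2(x)$ for $x\in[h,1]$, with the same kernels $K_1,K_2$ acting on $g$.

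For the bound on $v_h$, the key observation is that the row and column integrals of $K_1$ and $K_2$ are finite and equal (they were already computed to be $2h^{1/2}$ and $2(2-\sqrt{2})h^{1/2}$, respectively). Schur's test (Theorem 6.18 in \cite{Folland99}) therefore bounds each integral operator on \emph{every} $L^p$, $1\le p\le\infty$, by the same constant, so in particular $\|F_1\|_{L^1(h,1)}\le 2h^{1/2}\|g\|_{L^1(0,1)}$ and $\|F_2\|_{L^1(h,1)}\le 2(2-\sqrt{2})h^{1/2}\|g\|_{L^1(0,1)}$. Dividing by $\sqrt{\pi}\,h$ yields
\begin{align*}
\|v_h\|_{L^1(h,1)}\le 2(3-\sqrt{2})\pi^{-1/2}h^{-1/2}\|g\|_{L^1(0,1)}.
\end{align*}
The other two ingredients are Lemma \ref{lem: basic bounds 2d l1}, which supplies $\|v-v_h\|_{L^1(h,1)}\le 2^{-1}h\|v'\|_{L^1(0,1)}$, and the boundary estimate $\|v\|_{L^1(0,h)}\le h\|v\|_{L^\infty(0,1)}\le h\|v'\|_{L^1(0,1)}$, where the final step is Poincar\'e's inequality (valid since $\supp(v)\subset[0,1)$ forces $v(1)=0$).

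Assembling these through the triangle inequality gives a sum bound of the shape $\|v\|_{L^1(0,1)}\le C_1 h\|v'\|_{L^1(0,1)}+C_2 h^{-1/2}\|g\|_{L^1(0,1)}$. Here the decisive difference from the $L^2$ case appears: the two $h$-powers are now $h^{1}$ and $h^{-1/2}$ rather than $h^{1/2}$ and $h^{-1/2}$. Minimizing $A h+B h^{-1/2}$ over $h$ gives $h^{*}=(B/2A)^{2/3}$, and substituting back makes both terms scale as $A^{1/3}B^{2/3}$; with $A\propto\|v'\|_{L^1(0,1)}$ and $B\propto\|g\|_{L^1(0,1)}$, this is exactly what turns the exponents into $1/3$ and $2/3$ and produces Equation \eqref{eq: PJ w11 bound 2d l1}.

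The main obstacle is the same bookkeeping point that arises in Theorems \ref{thm: PJ w11 bound 2d} and \ref{thm: PJ w11 bound 3d}: I must certify that the unconstrained minimizer $h^{*}$ actually lies in $(0,1/2]$, so that the optimization is compatible with the range on which $v_h$ is defined. As there, I would first slightly enlarge the coefficient $C_1$ of the $h$-term (which only weakens the inequality) so as to push $h^{*}$ below $1/2$, and then control the ratio $\|g\|_{L^1(0,1)}/\|v'\|_{L^1(0,1)}$ appearing in $h^{*}$. The latter follows by integrating by parts as in Equation \eqref{eq: PJ-w11 eq8 2d}, writing $g=K*f$ with $K(x)=-\tfrac{2}{\sqrt{\pi}}\sqrt{|x|}\1_{[0,1]}(-x)$ and $f=v'\1_{[0,1]}$, and applying Young's inequality to obtain $\|g\|_{L^1(0,1)}\le\|K\|_{L^1(\R)}\|v'\|_{L^1(0,1)}=\tfrac{4}{3\sqrt{\pi}}\|v'\|_{L^1(0,1)}$. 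With this bound the ratio is an absolute constant, and a direct numerical check confirms $h^{*}\le 1/2$ after the enlargement.
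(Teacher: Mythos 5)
Your proposal is correct and takes essentially the same route as the paper's own proof: the same decomposition $\sqrt{\pi}\,h\,v_h = F_1 + F_2$, the same use of Lemma \ref{lem: basic bounds 2d l1} and the Poincar\'{e} boundary estimate $\|v\|_{L^1(0,h)}\le h\|v'\|_{L^1(0,1)}$, the same enlargement of the $h$-coefficient before optimizing, and the same Young's-inequality bound $\|g\|_{L^1(0,1)}\le\tfrac{4}{3\sqrt{\pi}}\|v'\|_{L^1(0,1)}$ to certify $h^*\le 1/2$. The only cosmetic difference is that you invoke the Schur-type test (Theorem 6.18 in \cite{Folland99}) to bound $F_1,F_2$ in $L^1$, whereas the paper applies Young's inequality for convolutions to $F_i=K_i*g$; both yield the identical constants $2h^{1/2}$ and $2\left(2-\sqrt{2}\right)h^{1/2}$.
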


\begin{proof}
We have shown in the proof of Theorem \ref{thm: PJ w11 bound 2d} that
\begin{align}
\sqrt{\pi}hv_h(x) &= \int_{x-h}^x\dfrac{g(y)}{\sqrt{y-(x-h)}} \dd{y} + \int_x^1g(y) \left[\dfrac{1}{\sqrt{y-(x-h)}} - \dfrac{1}{\sqrt{y-x}}\right] \dd{y} \nonumber \\
&=: F_1(x) + F_2(x). \label{eq: PJ-w11 eq2 2d l1}
\end{align}
It can be seen from Equation \eqref{eq: PJ-w11 eq2 2d l1} that $F_1 = K_1*g$ and $F_2=K_2*g$ on $[h,1]$, where
\begin{align*}
K_1(x):= \dfrac{\1_{[0,h]}(x)}{\sqrt{h-x}}, \quad K_2(x):= \dfrac{\1_{[0,h]}(-x)}{\sqrt{h-x}} - \dfrac{\1_{[0,h]}(-x)}{\sqrt{|x|}},
\end{align*}
and we have extended the functions to $\R$. By Young's inequality for convolutions, we have $L^1(h,1)$ control over each term in Equation \eqref{eq: PJ-w11 eq2 2d l1}:
\begin{subequations}\label{eq: PJ-w11 eq3 2d l1}
\begin{align}
\|F_1\|_{L^1(h,1)} &= \|K_1*g\|_{L^1(h,1)} \le \|K_1\|_{L^1(\R)}\|g\|_{L^1(\R)} = 2h^{1/2}\|v'\|_{L^1(0,1)}, \\
\|F_2\|_{L^1(h,1)} &= \|K_2*g\|_{L^1(h,1)} \le \|K_2\|_{L^1(\R)}\|g\|_{L^1(\R)} = 2\left(2-\sqrt{2}\right)h^{1/2} \|g\|_{L^1(0,1)},
\end{align}
\end{subequations}
where the last equalities can be calculated directly.  By combining Equations \eqref{eq: PJ-w11 eq2 2d l1} and \eqref{eq: PJ-w11 eq3 2d l1}, we obtain the following:
\begin{align}
\|v_h\|_{L^1(h,1)} \le 2\left(3-\sqrt{2}\right)\pi^{-1/2}h^{-1/2} \|g\|_{L^1(0,1)}. \label{eq: PJ-w11 eq4 2d l1}
\end{align}
By applying the $L^p$ interpolation theorem and Poincar\'{e}'s inequality in 1D, we obtain:
\begin{align}
\|v\|_{L^1(0,h)} &\le h\|v\|_{L^\infty(0,h)} \le h\|v\|_{L^\infty(0,1)} \le h\|v'\|_{L^1(0,1)}. \label{eq: PJ-w11 eq5 2d l1}
\end{align}
To obtain an estimate in the ${L^1(0,1)}$ norm, we apply the triangle inequality and the results from Equations \eqref{eq: basic bound 2d l1} and \eqref{eq: PJ-w11 eq4 2d l1}-\eqref{eq: PJ-w11 eq5 2d l1}:
\begin{align}
\|v\|_{L^1(0,1)} &\le \|v\|_{L^1(0,h)} + \|v-v_h\|_{L^1(h,1)} + \|v_h\|_{L^1(h,1)} \nonumber \\
&\le \dfrac{3}{2}h\|v'\|_{L^1(0,1)} + 2\left(3-\sqrt{2}\right)\pi^{-1/2}h^{-1/2}\|g\|_{L^1(0,1)} \nonumber \\
&\le 3h\|v'\|_{L^1(0,1)} + 2\left(3-\sqrt{2}\right)\pi^{-1/2}h^{-1/2}\|g\|_{L^1(0,1)}. \label{eq: PJ-w11 eq6 2d l1}
\end{align}
Minimizing the right-hand side of Equation \eqref{eq: PJ-w11 eq6 2d l1} with the constraint $h\in(0, 1/2]$ yields: 
\begin{align}
h^* = \left(\dfrac{\left(3-\sqrt{2}\right)\|g\|_{L^1(0,1)}}{3\sqrt{\pi}\|v'\|_{L^1(0,1)}}\right)^{2/3}. \label{eq: PJ-w11 eq7 2d l1}
\end{align}
To check that the minimizer satisfies the constraint, we apply Young's inequality for convolutions to obtain:
\begin{align}
\|g\|_{L^1(0,1)} = \|K*f\|_{L^1(0,1)} \le \|K\|_{L^1(\R)}\|f\|_{L^1(\R)} = \dfrac{4}{3\sqrt{\pi}} \|v'\|_{L^1(0,1)}, \label{eq: PJ-w11 eq9 2d l1}
\end{align}
where the functions $K$ and $f$ are defined by Equation \eqref{eq: PJ-w11 kernel g}. Combining Equations \eqref{eq: PJ-w11 eq7 2d l1} and \eqref{eq: PJ-w11 eq9 2d l1} yields:
\begin{align*}
h^* = \left(\dfrac{\left(3-\sqrt{2}\right)\|g\|_{L^1(0,1)}}{3\sqrt{\pi}\|v'\|_{L^1(0,1)}}\right)^{2/3} \le \left(\dfrac{4\left(3-\sqrt{2}\right)}{9\pi}\right)^{2/3} \le \dfrac{1}{2}. 
\end{align*}
By optimizing the right-hand side of Equation \eqref{eq: PJ-w11 eq6 2d l1} with respect to $h$, we obtain the following stability estimate:
\begin{align*}
\|v\|_{L^1(0,1)} \le 3^{4/3}\pi^{-1/3}\left(3-\sqrt{2}\right)^{2/3}\|v'\|_{L^1(0,1)}^{1/3}\|g\|_{L^1(0,1)}^{2/3}.
\end{align*}
\end{proof}


\begin{theorem} \label{thm: PJ bv bound 2d l1}
If $v\in BV(0,1)$ with $\supp(v)\subset[0,1)$ and $\abelJ v = g$, we have:
\begin{align}
\|v\|_{L^1(0,1)} \le C\|v\|_{TV(0,1)}^{1/3}\|g\|_{L^1(0,1)}^{2/3}, \label{eq: PJ bv bound 2d l1}
\end{align}
where $C$ is a constant independent of $v$.
\end{theorem}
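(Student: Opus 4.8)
The plan is to mirror the density argument used to establish Theorem \ref{thm: PJ bv bound 2d}, now extending the $W^{1,1}$ estimate of Theorem \ref{thm: PJ w11 bound 2d l1} to all of $BV(0,1)$. First I would invoke the smooth approximation theorem for $BV$ functions to produce a sequence $\{v_k\}_{k=1}^\infty\subset W^{1,1}(0,1)\cap C^\infty(0,1)$ with $\supp(v_k)\subset[0,1)$ satisfying the same three conditions recorded in Equation \eqref{eq: PJ-bv eq1 2d}, namely $\|v_k-v\|_{L^1(0,1)}\to0$, $v_k\to v$ almost everywhere, and $\|v_k\|_{TV(0,1)}\to\|v\|_{TV(0,1)}$ as $k\to\infty$. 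Since each $v_k\in C^1(0,1)$, the last convergence yields $\|v_k'\|_{L^1(0,1)}\to\|v\|_{TV(0,1)}$, exactly as in Equation \eqref{eq: PJ-bv eq3 2d}.

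Next, setting $g_k:=\abelJ v_k$ and applying Theorem \ref{thm: PJ w11 bound 2d l1} to each $v_k$ gives
\begin{align*}
\|v_k\|_{L^1(0,1)} \le C\|v_k'\|_{L^1(0,1)}^{1/3}\|g_k\|_{L^1(0,1)}^{2/3},
\end{align*}
with $C$ independent of $k$. The one substantive step is to show $\|g_k\|_{L^1(0,1)}\to\|g\|_{L^1(0,1)}$, which I would obtain from the $L^1$-continuity of $\abelJ$: applying Theorem \ref{thm: J lp continuity 2d} with $p=1$ (equivalently, using Young's inequality for convolutions, since the kernel $x\mapsto\pi^{-1/2}|x|^{-1/2}\1_{[0,1]}(x)$ lies in $L^1(\R)$) gives $\|g_k-g\|_{L^1(0,1)}=\|\abelJ(v_k-v)\|_{L^1(0,1)}\le C\|v_k-v\|_{L^1(0,1)}\to0$. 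Note that this is the $L^1$ analogue of Equation \eqref{eq: PJ-bv eq4 2d}, but simpler: it requires neither the $L^p$ interpolation step nor the Poincar\'{e} bound on $\|v_k-v\|_{L^\infty}$ used in the $L^2$ argument.

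Finally, because $\|v_k-v\|_{L^1(0,1)}\to0$ already gives $\|v_k\|_{L^1(0,1)}\to\|v\|_{L^1(0,1)}$ directly by the reverse triangle inequality, I can pass to the limit on both sides of the termwise estimate to obtain
\begin{align*}
\|v\|_{L^1(0,1)} = \lim_{k\to\infty}\|v_k\|_{L^1(0,1)} \le C\lim_{k\to\infty}\|v_k'\|_{L^1(0,1)}^{1/3}\|g_k\|_{L^1(0,1)}^{2/3} = C\|v\|_{TV(0,1)}^{1/3}\|g\|_{L^1(0,1)}^{2/3},
\end{align*}
which is Equation \eqref{eq: PJ bv bound 2d l1}. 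I do not expect a genuine obstacle here: the argument is a routine approximation, and the only point demanding care, namely controlling the data term $\|g_k-g\|_{L^1}$, is in fact \emph{easier} than in the $L^2$ case because the $L^1\to L^1$ boundedness of $\abelJ$ follows at once from its integrable kernel, and the $L^1$ convergence of the approximants lets me take a true limit rather than appealing to Fatou's lemma.
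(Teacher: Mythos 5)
Your proof is correct and follows essentially the same route as the paper's: the same smooth approximation of $v$ in $BV(0,1)$, the same application of the $W^{1,1}$ estimate of Theorem~\ref{thm: PJ w11 bound 2d l1} to the approximants, and the same $L^1$-continuity of $\abelJ$ obtained from Theorem~\ref{thm: J lp continuity 2d} with $p=1$, $\epsilon=1/2$ (equivalently, Young's inequality with the integrable kernel). The only difference is cosmetic: in the final step you take a genuine limit of $\|v_k\|_{L^1(0,1)}$ using the $L^1$ convergence, whereas the paper invokes a.e.\ convergence and Fatou's lemma; both are valid, and your observation that Fatou is unnecessary here (unlike in the $L^2$ case, where only $L^1$ convergence of $v_k$ is available) is accurate.
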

\begin{proof}
The proof is similar to the proof of Theorem \ref{thm: PJ bv bound 2d}. Using the smooth approximation theorem for $BV$ functions, there exists a sequence of functions $\{v_k\}_{k=1}^\infty\subset W^{1,1}(0,1)\cap C^\infty(0,1)=BV(0,1)\cap C^\infty(0,1)$ with the following properties: \begin{subequations} \label{eq: PJ-bv eq1 2d l1}
\begin{align}
&\|v_k-v\|_{L^1(0,1)}\to0 \quad {\rm as}\,\,k\to\infty, \label{eq: PJ-bv eq1a 2d l1} \\
& v_k\to v \quad {\rm a.e.\,\,as}\,\,k\to\infty, \label{eq: PJ-bv eq1b 2d l1} \\
{\rm and}\quad &\|v_k\|_{TV(0,1)}\to\|v\|_{TV(0,1)} \quad {\rm as}\,\,k\to\infty. \label{eq: PJ-bv eq1c 2d l1}
\end{align}
\end{subequations} Let $g_k=\abelJ v_k$. Then by Theorem \ref{thm: PJ w11 bound 2d l1},
\begin{align}
\|v_k\|_{L^1(0,1)} \le C\|v_k'\|_{L^1(0,1)}^{1/3}\|g_k\|_{L^1(0,1)}^{2/3}, \label{eq: PJ-bv eq2 2d l1}
\end{align}
where constant $C$, independent of the choice of the approximating sequence. The functions $v_k$ are $C^1(0,1)$. Therefore, condition \eqref{eq: PJ-bv eq1c 2d l1} implies that:
\begin{align}
\|v_k'\|_{L^1(0,1)}\to\|v\|_{TV(0,1)} \quad {\rm as}\,\,k\to\infty. \label{eq: PJ-bv eq3 2d l1}
\end{align}
On the other hand, choosing $p=1$ and $\epsilon=1/2$ in Theorem \ref{thm: J lp continuity 2d} so that $s=1$, and applying condition \eqref{eq: PJ-bv eq1a 2d l1}, we have:
\begin{align}
\|g_k-g\|_{L^1(0,1)} &= \|\abelJ(v_k-v)\|_{L^1(0,1)} \le \dfrac{2}{\sqrt{\pi}}\|v_k-v\|_{L^1(0,1)} \to 0 \label{eq: PJ-bv eq4 2d l1}
\end{align}
as $k\to\infty$. Therefore, by Equations \eqref{eq: PJ-bv eq2 2d l1}-\eqref{eq: PJ-bv eq4 2d l1}:
\begin{align*}
\|v\|_{L^1(0,1)} \le \liminf_{k\to\infty}\|v_k\|_{L^1(0,1)} \le C\lim_{k\to\infty}\|v_k'\|_{L^1(0,1)}^{1/3}\|g_k\|_{L^1(0,1)}^{2/3} = C\|v\|_{TV(0,1)}^{1/3}\|g\|_{L^1(0,1)}^{2/3},
\end{align*}
where the first step follows from condition \eqref{eq: PJ-bv eq1b 2d l1} and Fatou's Lemma. 
\end{proof}


By a density argument, one can obtain the following theorem from Equations \eqref{eq: PJ bv bound 2d l1} and \eqref{eq: r-bound to xy-bound 2d}.

\begin{theorem} \label{thm: PA bv bound 2d l1}
Let $u:B(0,1)\subset\R^2\to\R$ be an axisymmetric function such that, as a function of $r$, $u\in BV(0,1)$ and $\supp(u)\subset[0,1)$. If $\abel u = f$, we have:
\begin{align*}
\|u\|_{L^1(B(0,1))} \le C\|u\|_{TV(0,1)}^{1/3}\|f\|_{L^1(0,1)}^{2/3}, 
\end{align*}
where $C$ is a constant independent of $u$,
\begin{align*}
\|u\|_{L^1(B(0,1))} &:= \iint_{B(0,1)} |u(x,y)|\dd{x}\dd{y},
\end{align*}
and $\|u\|_{TV(0,1)}$ is defined by Equation \eqref{eq: def TV norm 2d}.
\end{theorem}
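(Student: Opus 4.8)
The plan is to mirror the derivation of the $L^2$ estimate in Theorem \ref{thm: PA bv bound 2d}: I would reduce the Abel problem to the $\abelJ$-problem through the change of variables $v(r^2)=u(r)$, apply the $L^1$-stability bound for $\abelJ$ from Theorem \ref{thm: PJ bv bound 2d l1}, and then transfer the resulting inequality back to $u$ and $f$ via the norm relations collected in Equation \eqref{eq: r-bound to xy-bound 2d}. Concretely, setting $v(s):=u(\sqrt{s})$ and invoking Equation \eqref{eq: A-J connection 2d} gives $\abel u(x)=\sqrt{\pi}\,\abelJ v(x^2)$, so with $g:=\abelJ v$ the hypothesis $\abel u=f$ reads $f(x)=\sqrt{\pi}\,g(x^2)$. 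Because $s\mapsto\sqrt{s}$ is a continuous monotone bijection of $[0,1]$, the radial profile $u\in BV(0,1)$ with $\supp(u)\subset[0,1)$ transfers to $v\in BV(0,1)$ with $\supp(v)\subset[0,1)$, and Theorem \ref{thm: PJ bv bound 2d l1} yields
\begin{align*}
\|v\|_{L^1(0,1)}\le C\|v\|_{TV(0,1)}^{1/3}\|g\|_{L^1(0,1)}^{2/3}.
\end{align*}

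Next I would rewrite each of the three norms above in terms of $u$ and $f$, which are collected in Equation \eqref{eq: r-bound to xy-bound 2d}. Writing the area integral in polar coordinates and substituting $s=r^2$ yields $\|u\|_{L^1(B(0,1))}=\pi\|v\|_{L^1(0,1)}$; applying the same substitution to $g(s)=\pi^{-1/2}f(\sqrt{s})$ together with $x\le1$ gives $\|g\|_{L^1(0,1)}\le 2\pi^{-1/2}\|f\|_{L^1(0,1)}$; and the invariance of the total variation under the monotone map $r\mapsto r^2$ gives $\|v\|_{TV(0,1)}=\|u\|_{TV(0,1)}$. Substituting these three relations into the displayed inequality and collecting the numerical factors into a single constant independent of $u$ produces the claimed bound.

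The step needing the most care is the total-variation identity $\|v\|_{TV(0,1)}=\|u\|_{TV(0,1)}$, which cannot be read off the formal chain rule $u'(r)=2r\,v'(r^2)$ since the Jacobian $2r$ vanishes at the origin. I would secure it through the density argument flagged in the statement: approximate $v$ by smooth functions $v_k$ with $\|v_k\|_{TV(0,1)}\to\|v\|_{TV(0,1)}$ and $v_k\to v$ almost everywhere, verify the identity for each $v_k$ by changing variables in $\int_0^1|v_k'(s)|\,\dd{s}$, and then pass to the limit; alternatively, one may simply invoke the invariance of the pointwise total variation under the continuous monotone reparametrization $r\mapsto r^2$. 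Once this identity is in place, the remaining substitutions are elementary and the estimate follows exactly as above.
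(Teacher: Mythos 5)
Your proposal is correct and takes essentially the same route as the paper: the paper obtains this theorem precisely by reducing to the $\abelJ$-problem via $v(r^2)=u(r)$, applying the $L^1$-stability bound of Theorem \ref{thm: PJ bv bound 2d l1}, and transferring norms back through the relations of Equation \eqref{eq: r-bound to xy-bound 2d} with a density argument. The only difference is that you explicitly work out the $L^1$ analogues of those norm relations (the paper records them only in their $L^2$/TV form) and the invariance of the total variation under $r\mapsto r^2$, details the paper leaves implicit.
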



We now extend the preceding results to $L^1$-stability estimates for $BV$ solutions in 3D.


\begin{lemma} \label{lem: basic bounds 3d l1}
Let $v\in W^{1,1}(\Omega)$. Let $h\in(0, 1/2]$ and define $v_h$ by Equation \eqref{eq: running average 3d}. Then the following estimate holds:
\begin{align}
\|v-v_h\|_{L^1(\Omega_h)} &\le 2^{-1}h\|Dv\|_{L^1(\Omega)}. \label{eq: basic bound 3d l1}
\end{align}
\end{lemma}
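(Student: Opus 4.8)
The plan is to reduce the estimate to the one-dimensional result of Lemma \ref{lem: basic bounds 2d l1} by slicing $\Omega$ along lines parallel to the first coordinate axis, exactly as was done for the $L^2$ bound in the proof of Lemma \ref{lem: basic bounds 3d}. Since $v\in W^{1,1}(\Omega)$, I would first invoke (for instance, Theorem 10.35 in \cite{Leoni09}) that for almost every $z\in[-1,1]$ the slice $v(\cdot,z)$ is absolutely continuous on $[0,1]$, hence lies in $W^{1,1}(0,1)$; this guarantees that $v_h(\cdot,z)$ is well-defined for almost every $z$ via Equation \eqref{eq: running average 3d}.

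Next I would apply Lemma \ref{lem: basic bounds 2d l1} to each admissible slice, obtaining
\begin{align*}
\|v(\cdot,z)-v_h(\cdot,z)\|_{L^1(h,1)} \le 2^{-1}h\|D_1v(\cdot,z)\|_{L^1(0,1)}, \quad \text{a.e.}\ z\in[-1,1].
\end{align*}
Integrating this pointwise-in-$z$ inequality over $[-1,1]$ and applying Fubini's theorem to recognize the iterated integrals as norms over $\Omega_h$ and $\Omega$ yields
\begin{align*}
\|v-v_h\|_{L^1(\Omega_h)} = \int_{-1}^1\|v(\cdot,z)-v_h(\cdot,z)\|_{L^1(h,1)}\dd{z} \le 2^{-1}h\int_{-1}^1\|D_1v(\cdot,z)\|_{L^1(0,1)}\dd{z} = 2^{-1}h\|D_1v\|_{L^1(\Omega)}.
\end{align*}
Finally, I would bound the partial derivative by the full weak gradient through Lemma \ref{lem: partial total variation}, i.e.\ $\|D_1v\|_{L^1(\Omega)}\le\|Dv\|_{L^1(\Omega)}$, which gives the desired Equation \eqref{eq: basic bound 3d l1}.

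I do not anticipate a genuine obstacle here: in contrast to the $L^2$ analogue in Lemma \ref{lem: basic bounds 3d}, the $L^1$ slicing identity $\|\cdot\|_{L^1(\Omega)}=\int_{-1}^1\|\cdot\|_{L^1(0,1)}\dd{z}$ is exact, so no $L^p$ interpolation step and no auxiliary $\|v\|_{L^\infty(\Omega)}$ factor are needed, and the constant $2^{-1}h$ transfers verbatim from the one-dimensional estimate. The only points requiring mild care are the measurability/absolute-continuity of almost-every slice (handled by the cited result) and the justification of the Fubini interchange, both of which are immediate given $v\in W^{1,1}(\Omega)$.
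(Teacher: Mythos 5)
Your proposal is correct and follows essentially the same route as the paper's own proof: slice along lines parallel to the first axis, apply Lemma \ref{lem: basic bounds 2d l1} to almost every slice, integrate in $z$, and then bound $\|D_1v\|_{L^1(\Omega)}$ by $\|Dv\|_{L^1(\Omega)}$. Your added remarks on the well-definedness of the slices and the exactness of the $L^1$ slicing identity (so that no interpolation or $L^\infty$ factor is needed, unlike Lemma \ref{lem: basic bounds 3d}) are accurate and consistent with the paper.
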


\begin{proof}
Replacing $v(\cdot)$ by $v(\cdot,z)$ in the proof of Lemma \ref{lem: basic bounds 2d l1}, one can obtain the following estimate from Equation \eqref{eq: basic bound 2d l1}:
\begin{align}
\|v(\cdot,z)-v_h(\cdot,z)\|_{L^1(h,1)} \le 2^{-1}h\|D_1v(\cdot,z)\|_{L^1(0,1)}, \quad \text{a.e.}\ z\in[-1,1]. \label{eq: basic bound eq1 3d l1}
\end{align}
Integrating Equation \eqref{eq: basic bound eq1 3d l1} in $z$ from $[-1,1]$ yields:
\begin{align*}
\|v-v_h\|_{L^1(\Omega_h)} &\le 2^{-1}h\|D_1v\|_{L^1(\Omega)} \le 2^{-1}h\|Dv\|_{L^1(\Omega)},
\end{align*}
which shows Equation \eqref{eq: basic bound 3d l1}.
\end{proof}


\begin{theorem} \label{thm: PJ w11 bound 3d l1}
If $v\in W^{1,1}(\Omega)$ with $\supp(v)\subset[0,1)\times[-1,1]$ and $\abelJ v = g$, we have:
\begin{align}
\|v\|_{L^1(\Omega)} &\le C\|Dv\|_{L^1(\Omega)}^{1/3}\|g\|_{L^1(\Omega)}^{2/3}, \label{eq: PJ w11 bound 3d l1}
\end{align}
where $C$ is a constant independent of $v$.
\end{theorem}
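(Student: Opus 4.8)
The plan is to follow the slice-by-slice reduction used in the proof of Theorem \ref{thm: PJ w11 bound 3d}, replacing every $L^2$ estimate by its $L^1$ counterpart from the proof of Theorem \ref{thm: PJ w11 bound 2d l1}. The decisive simplification in the $L^1$ setting is that, unlike the $L^2$ case, no $L^\infty$ hypothesis on $v$ is needed: the one-dimensional bounds integrate directly in $z$ by Fubini, so the interpolation step that forced $\|v\|_{L^\infty(\Omega)}$ into Equation \eqref{eq: PJ-w11 eq4 3d} can be avoided entirely.

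First, for almost every $z\in[-1,1]$ the slice $v(\cdot,z)$ lies in $W^{1,1}(0,1)$, so replacing $v(\cdot)$ by $v(\cdot,z)$ in the arguments leading to Equations \eqref{eq: PJ-w11 eq4 2d l1} and \eqref{eq: PJ-w11 eq5 2d l1} yields the slicewise estimates $\|v_h(\cdot,z)\|_{L^1(h,1)}\le 2(3-\sqrt{2})\pi^{-1/2}h^{-1/2}\|g(\cdot,z)\|_{L^1(0,1)}$ and $\|v(\cdot,z)\|_{L^1(0,h)}\le h\|D_1v(\cdot,z)\|_{L^1(0,1)}$. Integrating each in $z$ over $[-1,1]$ gives $\|v_h\|_{L^1(\Omega_h)}\le 2(3-\sqrt{2})\pi^{-1/2}h^{-1/2}\|g\|_{L^1(\Omega)}$ and, after invoking Lemma \ref{lem: partial total variation} to replace $\|D_1v\|_{L^1(\Omega)}$ by $\|Dv\|_{L^1(\Omega)}$, the bound $\|v\|_{L^1(\Omega\backslash\Omega_h)}\le h\|Dv\|_{L^1(\Omega)}$.

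Next I would combine these two bounds with Lemma \ref{lem: basic bounds 3d l1} through the triangle inequality
\[
\|v\|_{L^1(\Omega)} \le \|v\|_{L^1(\Omega\backslash\Omega_h)} + \|v-v_h\|_{L^1(\Omega_h)} + \|v_h\|_{L^1(\Omega_h)},
\]
which produces a sum-bound of the form $\tfrac{3}{2}h\|Dv\|_{L^1(\Omega)} + 2(3-\sqrt{2})\pi^{-1/2}h^{-1/2}\|g\|_{L^1(\Omega)}$. Slightly enlarging the coefficient of $h$ (from $\tfrac32$ to $3$, exactly as in Equation \eqref{eq: PJ-w11 eq6 2d l1}) makes this structurally identical to the one-dimensional bound, and minimizing over $h$ gives $h^*=\left((3-\sqrt{2})\|g\|_{L^1(\Omega)}/(3\sqrt{\pi}\,\|Dv\|_{L^1(\Omega)})\right)^{2/3}$; evaluating the bound at $h^*$ returns the claimed $1/3$--$2/3$ product estimate, presumably with the same constant $3^{4/3}\pi^{-1/3}(3-\sqrt{2})^{2/3}$ as in the two-dimensional case.

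The one step deserving care is the admissibility constraint $h^*\in(0,1/2]$, and I expect this to be the only real obstacle, albeit a routine one. To handle it I would reuse the inequality $\|g\|_{L^1(\Omega)}\le \tfrac{4}{3\sqrt{\pi}}\|Dv\|_{L^1(\Omega)}$ obtained via Young's inequality for convolutions in the proof of Theorem \ref{thm: PJ w11 bound 3d}; substituting it into the formula for $h^*$ bounds the latter by $\left(4(3-\sqrt{2})/(9\pi)\right)^{2/3}$, which is at most $1/2$, matching the one-dimensional computation in Equation \eqref{eq: PJ-w11 eq7 2d l1}.
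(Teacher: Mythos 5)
Your proposal is correct, but it is organized differently from the paper's own proof. The paper's argument is much shorter: it applies the finished one-dimensional result, Theorem \ref{thm: PJ w11 bound 2d l1}, to the slice $v(\cdot,z)$ for almost every $z$, obtaining $\|v(\cdot,z)\|_{L^1(0,1)} \le C\|D_1v(\cdot,z)\|_{L^1(0,1)}^{1/3}\|g(\cdot,z)\|_{L^1(0,1)}^{2/3}$, and then integrates in $z$. Note that this slicewise route implicitly lets the optimal averaging length $h$ depend on $z$, and the final integration step silently uses H\"older's inequality with exponents $(3,3/2)$ to pass from $\int_{-1}^{1} a(z)^{1/3}b(z)^{2/3}\dd{z}$ to $\bigl(\int_{-1}^{1} a\bigr)^{1/3}\bigl(\int_{-1}^{1} b\bigr)^{2/3}$. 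You instead fix a single $h$ for the whole slab, integrate the intermediate slicewise bounds \eqref{eq: PJ-w11 eq4 2d l1} and \eqref{eq: PJ-w11 eq5 2d l1} over $z$, invoke Lemma \ref{lem: basic bounds 3d l1}, and optimize once globally, mirroring the structure of the three-dimensional $L^2$ proof (Theorem \ref{thm: PJ w11 bound 3d}); your admissibility check via $\|g\|_{L^1(\Omega)}\le \tfrac{4}{3\sqrt{\pi}}\|Dv\|_{L^1(\Omega)}$ is exactly what is needed, and your constant does come out to the same $3^{4/3}\pi^{-1/3}(3-\sqrt{2})^{2/3}$ (since $\tfrac{3}{2}\cdot 24^{1/3} = 3^{4/3}$). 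Your key observation — that the $L^1(\Omega)$ norm is literally the integral of the slicewise $L^1$ norms, so the interpolation step that forces $\|v\|_{L^\infty(\Omega)}$ into the $L^2$ argument disappears — is precisely why both routes work here. What each buys: the paper's proof is two lines given the 1D theorem but hides the H\"older step; yours is longer but entirely self-contained at the level of $\Omega$, avoids H\"older, and makes the constant and the constraint $h^*\in(0,1/2]$ explicit.
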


\begin{proof}
Replacing $v(\cdot)$ by $v(\cdot,z)$ in the proof of Lemma \ref{lem: basic bounds 3d l1} yields the following estimate from Equation \eqref{eq: PJ w11 bound 2d l1}:
\begin{align}
\|v(\cdot,z)\|_{L^1(0,1)} &\le C\|D_1v(\cdot,z)\|_{L^1(0,1)}^{1/3}\|g(\cdot,z)\|_{L^1(0,1)}^{2/3}, \quad \text{a.e.}\ z\in[-1,1]. \label{eq: PJ w11 bound eq1 3d l1}
\end{align}
Integrating Equation \eqref{eq: PJ w11 bound eq1 3d l1} in $z$ from $[-1,1]$ yields:
\begin{align*}
\|v\|_{L^1(\Omega)} &\le C\|D_1v\|_{L^1(\Omega)}^{1/3}\|g\|_{L^2(\Omega)}^{2/3} \le C\|Dv\|_{L^1(\Omega)}^{1/3}\|g\|_{L^2(\Omega)}^{2/3},
\end{align*}
which shows Equation \eqref{eq: PJ w11 bound 3d l1}.
\end{proof}


\begin{theorem} \label{thm: PJ bv bound 3d l1}
If $v\in BV(\Omega)$ with $\supp(v)\subset[0,1)\times[-1,1]$ and $\abelJ v = g$, we have:
\begin{align}
\|v\|_{L^1(\Omega)} &\le C\|v\|_{TV(\Omega)}^{1/3}\|g\|_{L^1(\Omega)}^{2/3}, \label{eq: PJ bv bound 3d l1}
\end{align}
where $C$ is a constant independent of $v$.
\end{theorem}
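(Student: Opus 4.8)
The plan is to derive Equation \eqref{eq: PJ bv bound 3d l1} from the $W^{1,1}$ estimate in Theorem \ref{thm: PJ w11 bound 3d l1} by a density argument, following the same template used for Theorem \ref{thm: PJ bv bound 2d l1} in one dimension and for Theorem \ref{thm: PJ bv bound 3d} in the $L^2$ setting. A useful simplification relative to Theorem \ref{thm: PJ bv bound 3d} is that the bound in Theorem \ref{thm: PJ w11 bound 3d l1} carries no $\|v\|_{L^\infty(\Omega)}$ factor; consequently the approximating sequence need only be controlled in $L^1(\Omega)$ and in total variation, and no uniform $L^\infty$ bound must be tracked.

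First I would invoke the strict approximation theorem for $BV$ functions to produce a sequence $\{v_k\}_{k=1}^\infty \subset W^{1,1}(\Omega)\cap C^\infty(\Omega)$ with $\|v_k-v\|_{L^1(\Omega)}\to 0$, with $v_k\to v$ almost everywhere after passing to a subsequence, and with $\|Dv_k\|_{L^1(\Omega)}\to\|v\|_{TV(\Omega)}$. Because $\supp(v)$ is compactly contained in $[0,1)\times[-1,1]$ in the radial variable, choosing the mollification parameter small enough keeps $\supp(v_k)\subset[0,1)\times[-1,1]$, so that each $v_k$ satisfies the hypotheses of Theorem \ref{thm: PJ w11 bound 3d l1}. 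Setting $g_k:=\abelJ v_k$, that theorem then yields $\|v_k\|_{L^1(\Omega)}\le C\|Dv_k\|_{L^1(\Omega)}^{1/3}\|g_k\|_{L^1(\Omega)}^{2/3}$ with $C$ independent of $k$.

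Next I would verify convergence of the two factors on the right-hand side. The total-variation factor is handled directly by the strict convergence $\|Dv_k\|_{L^1(\Omega)}\to\|v\|_{TV(\Omega)}$. For the data factor, the linearity and $L^1$-continuity of $\abelJ$ (Corollary \ref{cor: J lp continuity 3d} with $p=1$, equivalently Theorem \ref{thm: J lp continuity 2d} applied slice-wise) give $\|g_k-g\|_{L^1(\Omega)}=\|\abelJ(v_k-v)\|_{L^1(\Omega)}\le 2\pi^{-1/2}\|v_k-v\|_{L^1(\Omega)}\to 0$, whence $\|g_k\|_{L^1(\Omega)}\to\|g\|_{L^1(\Omega)}$. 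Finally, applying Fatou's lemma together with the almost-everywhere convergence $v_k\to v$ gives $\|v\|_{L^1(\Omega)}\le\liminf_{k\to\infty}\|v_k\|_{L^1(\Omega)}$, and passing to the limit in the displayed inequality yields Equation \eqref{eq: PJ bv bound 3d l1}.

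The main obstacle I anticipate is purely technical: ensuring that the chosen approximating sequence simultaneously lies in $W^{1,1}(\Omega)$, respects the radial support condition $\supp(v_k)\subset[0,1)\times[-1,1]$ demanded by Theorem \ref{thm: PJ w11 bound 3d l1}, and realizes the strict convergence $\|Dv_k\|_{L^1(\Omega)}\to\|v\|_{TV(\Omega)}$. Standard mollification delivers all three once one exploits the compactness of $\supp(v)$ away from the face $\{x=1\}$; alternatively, one may reuse the Lipschitz approximating sequence of Theorem \ref{thm: approximating sequence} as in the proof of Theorem \ref{thm: PJ bv bound 3d}, simply discarding the $L^\infty$ control that is superfluous in the $L^1$ setting.
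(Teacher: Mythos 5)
Your proposal is correct and matches the paper's proof, which simply states that Equation \eqref{eq: PJ bv bound 3d l1} follows by ``the same density argument as in the proof of Theorem \ref{thm: PJ bv bound 2d l1}'': smooth $BV$ approximation with $L^1$, almost-everywhere, and total-variation convergence, the $W^{1,1}$ bound of Theorem \ref{thm: PJ w11 bound 3d l1}, the $L^1$-continuity of $\abelJ$ from Corollary \ref{cor: J lp continuity 3d}, and Fatou's lemma. Your observations that no $L^\infty$ control is needed here (unlike Theorem \ref{thm: PJ bv bound 3d}) and that the radial support condition survives mollification are exactly the technical details the paper leaves implicit.
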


\begin{proof}
Equation \eqref{eq: PJ bv bound 3d l1} can be derived using the same density argument as in the proof of Theorem \ref{thm: PJ bv bound 2d l1}.
\end{proof}


The following theorem is a consequence of Equations \eqref{eq: PJ bv bound 3d l1} and \eqref{eq: r-bound to xy-bound 3d}, which extends the $L^1$-stability estimate to Problem \eqref{eq: f=Au model}.


\begin{theorem} \label{thm: PA bv bound 3d l1}
Let $u:U\subset\R^3\to\R$ be an axisymmetric function such that, as a function of $(r,z)$, $u\in BV(\Omega)$ and $\supp(u)\subset[0,1)\times[-1,1]$. If $\abel u = f$, we have:
\begin{align*}
\|u\|_{L^1(U)} &\le C\|u\|_{TV(\Omega)}^{1/3}\|f\|_{L^1(\Omega)}^{2/3}, 
\end{align*}
where $C$ is a constant independent of $u$,
\begin{align*}
\|u\|_{L^1(U)} &:= \int_{-1}^1\iint_{B(0,1)} |u(x,y,z)|\dd{x}\dd{y}\dd{z}, 
\end{align*}
and $\|u\|_{TV(\Omega)}$ is defined by Equation \eqref{eq: def TV norm 3d}.
\end{theorem}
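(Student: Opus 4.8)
The plan is to reduce the Abel bound to the already-established $\abelJ$-bound of Theorem \ref{thm: PJ bv bound 3d l1} by the same change of variables used throughout Section \ref{sec: theory3d}, exactly mirroring the passage from Theorem \ref{thm: PJ bv bound 2d l1} to Theorem \ref{thm: PA bv bound 2d l1} in the 2D setting. First I would introduce $v(r^2,z) := u(r,z)$, so that the 3D analogue of Equation \eqref{eq: A-J connection 2d}, namely $\abel u(x,z) = \sqrt{\pi}\,\abelJ v(x^2,z)$, gives $f(x,z) = \sqrt{\pi}\,g(x^2,z)$ with $g := \abelJ v$. The substitution $s=r^2$ maps $\Omega$ to itself and preserves compact support, so $v\in BV(\Omega)$ with $\supp(v)\subset[0,1)\times[-1,1]$, and Theorem \ref{thm: PJ bv bound 3d l1} applies to yield $\|v\|_{L^1(\Omega)} \le C\|v\|_{TV(\Omega)}^{1/3}\|g\|_{L^1(\Omega)}^{2/3}$.

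Next I would translate each of the three quantities back to $u$ and $f$ through Equation \eqref{eq: r-bound to xy-bound 3d}. For the left-hand side, writing the cylinder integral in polar coordinates and substituting $s=r^2$ gives the exact identity $\|u\|_{L^1(U)} = 2\pi\int_{-1}^1\int_0^1 |u(r,z)|\,r\,\dd{r}\dd{z} = \pi\|v\|_{L^1(\Omega)}$, since $r\,\dd{r}=\tfrac12\dd{s}$. For the data term, the same substitution applied to $\|g\|_{L^1(\Omega)}$ together with $|g(r^2,z)| = \pi^{-1/2}|f(r,z)|$ yields $\|g\|_{L^1(\Omega)} = \tfrac{2}{\sqrt\pi}\int_{-1}^1\int_0^1 r\,|f(r,z)|\,\dd{r}\dd{z} \le \tfrac{2}{\sqrt\pi}\|f\|_{L^1(\Omega)}$, using $r\le 1$ on $\Omega$.

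The main obstacle is the middle term, $\|v\|_{TV(\Omega)}$, which must be controlled from above by $\|u\|_{TV(\Omega)}$. The subtlety is that differentiation in $s$ introduces the singular factor $\partial_s v(r^2,z) = (2r)^{-1}\partial_r u(r,z)$, which blows up as $r\to 0$; the resolution is that in the $s$-part of the variation the Jacobian $2r\,\dd{r}$ exactly cancels this factor, giving $\int_0^1|\partial_s v|\,\dd{s} = \int_0^1|\partial_r u|\,\dd{r}$, while in the $z$-part the Jacobian contributes only a harmless factor $2r\le 2$. This gives $\|v\|_{TV(\Omega)} \le C\|u\|_{TV(\Omega)}$, which is the content of Equation \eqref{eq: r-bound to xy-bound 3d}; I would first verify it on the Lipschitz approximants (where the computation above is literal) and then pass to the limit by the density argument used for Theorem \ref{thm: PJ bv bound 3d}, so that it holds for genuine $BV$ functions rather than only smooth ones.

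Combining the three translations with the bound for $v$ and absorbing all constants into a single $C$ gives $\|u\|_{L^1(U)} = \pi\|v\|_{L^1(\Omega)} \le \pi C\|v\|_{TV(\Omega)}^{1/3}\|g\|_{L^1(\Omega)}^{2/3} \le C\|u\|_{TV(\Omega)}^{1/3}\|f\|_{L^1(\Omega)}^{2/3}$, as claimed. I expect the only genuinely delicate point to be the $BV$-level justification of the total-variation comparison near $r=0$; everything else is routine change of variables once Theorem \ref{thm: PJ bv bound 3d l1} is invoked.
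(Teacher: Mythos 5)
Your proposal is correct and follows essentially the same route as the paper: reduce to Theorem \ref{thm: PJ bv bound 3d l1} via the substitution $v(r^2,z)=u(r,z)$, translate the norms with the change-of-variables identities of Proposition \ref{prop: A-J norm}, and extend from Lipschitz approximants to $BV$ by the same density argument used for Theorem \ref{thm: PJ bv bound 3d}. If anything, you are more explicit than the paper, since Equation \eqref{eq: r-bound to xy-bound 3d} records only the $L^2$ relations for $v$ and $g$ and the paper leaves the $L^1$ analogues ($\|u\|_{L^1(U)}=\pi\|v\|_{L^1(\Omega)}$ and $\|g\|_{L^1(\Omega)}\le \tfrac{2}{\sqrt{\pi}}\|f\|_{L^1(\Omega)}$) implicit, whereas you derive them.
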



\section{Numerical Method}
\label{sec: method}

Suppose the data $f$ is 2D and is measured as a set of discrete points $\{f(x_i,z_j): i=1,\cdots,N, \,\, j=1,\cdots,M\}$ (when $M=1$, it reduces to the case where $f$ is 1D). To solve Problem \eqref{eq: TV model} numerically, we introduce the following discrete operators. 

\begin{definition} \label{def: discrete operators}
Assume that $X = \{(x_i,z_j): i=1,\cdots,N, \,\, j=1,\cdots,M\}$ is an $N\times M$ grid with grid-spacing equal to $h$. 
\begin{enumerate}[(i)]
\item 
If $u\in X$, then the discrete gradient $\nabla_h u$ of $u$ is a vector in $X\times X$ given by:
\begin{align*}
(\nabla_h u)_{i,j} = \left( (\nabla_h u)_{i,j}^1, (\nabla_h u)_{i,j}^2\right)
\end{align*}
for $i=1,\cdots,N$, $j=1,\cdots,M$, where
\begin{align*}
(\nabla_h u)_{i,j}^1 = \begin{cases}
(u_{i+1,j}-u_{i,j})/h &\quad\text{if }i<N, \\
0 &\quad\text{if }i=N,
\end{cases} \\
(\nabla_h u)_{i,j}^2 = \begin{cases}
(u_{i,j+1}-u_{i,j})/h &\quad\text{if }j<M, \\
0 &\quad\text{if }j=M;
\end{cases}
\end{align*}
see, for example, \cite{Chambolle04}.
\item 
If $p=(p^1,p^2)\in X\times X$, then the discrete divergence $\dir_h p$ of $p$ is a vector in $X$ given by:
\begin{align*}
(\dir_h p)_{i,j} = (\dir_h p)_{i,j}^1 + (\dir_h p)_{i,j}^2
\end{align*}
for $i=1,\cdots,N$, $j=1,\cdots,M$, where
\begin{align*}
(\dir_h p)_{i,j}^1 = \begin{cases}
(p_{i,j}^1-p_{i-1,j}^1)/h &\quad\text{if }1<i<N, \\
p_{i,j}^1/h &\quad\text{if }i=1, \\
-p_{i-1,j}^1/h &\quad\text{if }i=N,
\end{cases} \\
(\dir_h p)_{i,j}^2 = \begin{cases}
(p_{i,j}^2-p_{i,j-1}^2)/h &\quad\text{if }1<j<M, \\
p_{i,j}^2/h &\quad\text{if }j=1, \\
-p_{i,j-1}^2/h &\quad\text{if }j=M;
\end{cases}
\end{align*}
see, for example, \cite{Chambolle04}.
\item 
The discrete Abel transform $A: X\to X$ is a matrix of size $N\times N$, where
\begin{align*}
A_{ij} &= 
\begin{cases}
2 \left(\sqrt{x_j^2-x_i^2} - \sqrt{x_{j-1}^2-x_i^2}\right) &\quad\text{if } i<j, \\
0 &\quad\text{otherwise}
\end{cases}
\end{align*}
for $i,j=1,\cdots, N$. Derivation of $A$ is based on the onion-peeling method, see \cite{Dasch92, Pretzier92}.
\end{enumerate}
\end{definition}

\begin{remark}
One can verify using summation by parts that $(\nabla_h)^*=-\dir_h$. 
\end{remark}

Consider an axisymmetric function $u$ which is compactly supported in the cylindrical domain $U$. The Abel transform $f$ of $u$ is then compactly supported in $\Omega$. Let $U_h$ and $V_h$ be discretizations of $[-1,1]\times[-1,1]\times[0,1]\subset\R^3$ and $\Omega$, respectively:
\begin{align*}
U_h &= \left\{(x_i,y_j,z_k): -N\le i,j \le N, \,\, 0\le k\le N \right\}, \\
V_h &= \left\{(x_i,z_k): 0\le i,k \le N \right\},
\end{align*}
where $\{x_i\}_{i=-N}^N$ and $\{y_j\}_{j=-N}^N$ are the equi-spaced partition of $[-1,1]$ with grid-spacing $h$ equal to $1/N$, and $\{z_k\}_{k=0}^N$ is the equi-spaced partition of $[0,1]$ with the same grid-spacing. One can verify that if the data $f=f(x,z)$ is measured discretely on the grid $V_h$, then the information of $u=u(r,z)$ on the same grid can be obtained, and vise versa. Therefore, there is no distinction between partitioning the positive $x$-axis and partitioning the $r$-axis in the discrete setting for the Abel inverse problem.

To analyze the numerical solution, we define various discrete norms that relate to the analytical results derived in Section \ref{sec: theory}.


\begin{definition} \label{def: discrete norms}
Let $U_h$ and $V_h$ be defined as above. Let $u$ be an axisymmetric function which is evaluated discretely on the grid $U_h$ as a function of $(x,y,z)$, and on the grid $V_h$ as a function of $(r,z)$.
\begin{enumerate}[(i)]
\item The discrete $\ell^2$ norm of $u$ with respect to the Cartesian coordinates is defined by:
\begin{align*}
\|u\|_{\ell^2(U_h)} := h^{3/2}\sum_{i,j=-N}^N\sum_{k=1}^N u_{i,j,k}^2,
\end{align*}
where $u_{i,j,k} = u(x_i,y_j,z_k)$.
\item The discrete $\ell^2$ norm of $u$ with respect to the cylindrical coordinates is defined by:
\begin{align*}
\|u\|_{\ell^2(V_h)} := h\sum_{i=-N}^N\sum_{k=1}^N u_{i,k}^2,
\end{align*}
where $u_{i,k} = u(r_i,z_k)$.
\item The discrete $BV$ semi-norm of $u$ with respect to the cylindrical coordinates is defined by:
\begin{align*}
\|\nabla_h u\|_{\ell^1(V_h\times V_h)} := h^2 \sum_{i,j=1}^N\left|(\nabla_h u)_{i,j}\right| = h^2 \sum_{i,j=1}^N \sqrt{ \left((\nabla_h u)_{i,j}^1\right)^2 + \left((\nabla_h u)_{i,j}^2\right)^2 }.
\end{align*}
\item The discrete $\ell^\infty$ norm of $u$ is defined by:
\begin{align*}
\|u\|_{\ell^\infty(V_h)} := \max_{i,k=1,\cdots,N} |u_{i,k}|,
\end{align*}
where $u_{i,k} = u(r_i,z_k)$. This quantity is independent of the choice of coordinate system.
\end{enumerate}
\end{definition}

The primal-dual algorithm \cite{Chambolle11} applied to Problem \eqref{eq: discrete TV model} is summarized in Algorithm \ref{alg: primal dual}. The output $u^*$ of Algorithm \ref{alg: primal dual} is a discrete approximation to the solution $u=u(r,z)$ of Problem \eqref{eq: TV model}. The following theorem shows that the convergence of the primal-dual algorithm applied to Problem \eqref{eq: discrete TV model} is $\mathcal{O}(1/n)$, where $n$ is the number of iterations.

\begin{algorithm} [t!]
\caption{The primal-dual algorithm applied to Problem \eqref{eq: discrete TV model}}
\label{alg: primal dual}
\begin{algorithmic}[1]
\State Choose $\tau,\gamma>0$. Initialize $u^0\in V_h$ and $v^0\in V_h\times V_h$. Set $w^0 = u^0$ and $n=0$. Let \texttt{MaxIter} be the maximum number of iterations allowed.
\While{$n\le \texttt{MaxIter}$}
\State $p^n = v^n+\gamma\nabla_h w^n$
\State $v^{n+1} = p^n/\max(1,|p^n|)$, where the operation is preformed component-wise 
\State $q^n = u^n + \tau \dir_h v^{n+1}$
\State $u^{n+1} = \left(I + \tau\lambda A^TA\right)^{-1}\left(q^n+\tau\lambda A^Tf\right)$
\State $w^{n+1} = 2u^{n+1}-u^n$
\EndWhile
\State \Return $u^* = u^{n+1}$
\end{algorithmic}
\end{algorithm}

\begin{theorem} \label{Chambolle11 thm 1}
{\rm (restated from \cite{Chambolle11})}
Consider the sequence $(u^n,v^n)$ defined by Algorithm \ref{alg: primal dual} and let $(u^*,v^*)$ be the unique solution of the corresponding saddle-point form of Problem \eqref{eq: discrete TV model}:
\begin{align*}
\min_{u\in V_h}\max_{v\in V_h\times V_h} \quad & \langle \nabla_h u, v\rangle_{V_h\times V_h} + \|Au-f\|_{\ell^2(V_h)}^2 - \chi_B(v),
\end{align*}
where $\chi_B$ is the characteristic function of the unit ball $B$ in $\ell^\infty(V_h\times V_h)$:
\begin{align*}
\chi_B(v) =
\begin{cases}
0 \quad & \text{if } \|v\|_{\ell^\infty(V_h\times V_h)}\le 1, \\
+\infty \quad & \text{otherwise}.
\end{cases}
\end{align*}
Then $||u^n-u^*||_{\ell^2(V_h)}=\mathcal{O}(1/n)$. 
\end{theorem}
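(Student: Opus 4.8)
The plan is to recognize that Problem \eqref{eq: discrete TV model} is a particular instance of the general saddle-point problem analyzed in \cite{Chambolle11}, and then to verify that the structural hypotheses of the corresponding convergence theorem there are met, so that the stated $\mathcal{O}(1/n)$ rate is inherited. First I would identify the three ingredients of that framework. Take the bounded linear map to be the discrete gradient $K=\nabla_h:V_h\to V_h\times V_h$, the smooth term to be $G(u)=\frac{\lambda}{2}\|Au-f\|_{\ell^2(V_h)}^2$, and the nonsmooth term to be $F(p)=\|p\|_{\ell^1(V_h\times V_h)}$, whose convex conjugate is exactly $F^*=\chi_B$, the indicator of the unit ball in $\ell^\infty(V_h\times V_h)$. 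Using the identity $(\nabla_h)^*=-\dir_h$, this recasts \eqref{eq: discrete TV model} in the primal-dual form $\min_u\max_v \langle \nabla_h u,v\rangle + G(u)-F^*(v)$ stated in the theorem. One then checks that the two proximal updates of Algorithm \ref{alg: primal dual} are precisely the resolvents required by the method: line 4 is the componentwise projection onto $B$ (the prox of $F^*$), and line 6 is the linear solve $(I+\tau\lambda A^TA)^{-1}$ (the prox of $G$).

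Next I would verify the standing assumptions. Both $G$ and $F^*$ are proper, convex, and lower semicontinuous, since $G$ is a convex quadratic and $F^*$ is the indicator of a compact convex set; a saddle point exists by standard coercivity and compactness arguments, and we take $(u^*,v^*)$ to be the unique saddle point posited in the statement. The key quantitative input is the step-size stability condition $\tau\gamma\|\nabla_h\|^2<1$, for which I would invoke the standard operator-norm bound $\|\nabla_h\|^2\le 8/h^2$ for the forward-difference gradient (see \cite{Chambolle04}) to certify admissibility of the chosen parameters $\tau,\gamma$. With these verified, the ergodic partial primal-dual gap decays at rate $\mathcal{O}(1/n)$ by the basic estimate of \cite{Chambolle11}.

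The main obstacle is upgrading this gap estimate to the stated rate on the iterates $\|u^n-u^*\|_{\ell^2(V_h)}$ themselves, rather than merely on the ergodic duality gap. The natural route is to exploit strong convexity of one of the objective pieces, but here $G$ fails to be strongly convex: by Definition \ref{def: discrete operators} the discrete Abel matrix $A$ is strictly upper triangular and hence singular, so $A^TA$ is only positive semidefinite. I would therefore not attempt to reprove the rate from scratch, but instead appeal to the precise theorem in \cite{Chambolle11} under whose hypotheses the iterate rate holds, restricting the strong-convexity argument to $(\ker A)^\perp$ and using the uniqueness of the saddle point to control the component in $\ker A$. In short, the substance of the argument is the careful transcription of \eqref{eq: discrete TV model} into the form required by \cite{Chambolle11} and the verification that its quantitative assumptions hold; the $\mathcal{O}(1/n)$ conclusion is then inherited directly from that reference.
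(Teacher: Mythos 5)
The paper gives no proof of this theorem at all: it is a one-line restatement from \cite{Chambolle11}, so your overall strategy---transcribe Problem \eqref{eq: discrete TV model} into the Chambolle--Pock saddle-point framework and inherit the conclusion---is the same route the paper implicitly takes, and your transcription is correct: $K=\nabla_h$, $G(u)=\frac{\lambda}{2}\|Au-f\|_{\ell^2(V_h)}^2$, $F^*=\chi_B$, with line 4 of Algorithm \ref{alg: primal dual} the projection onto $B$ and line 6 the resolvent $(I+\tau\lambda A^TA)^{-1}$ of $\tau\partial G$.

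However, the final step of your proposal contains a genuine gap, which you flag but do not close. The only result in \cite{Chambolle11} that gives a rate on the iterates themselves (their Theorem 2, which yields $\|x^N-x^*\|^2=\mathcal{O}(1/N^2)$) requires genuine uniform convexity of $G$ or of $F^*$, and moreover applies to the \emph{accelerated} variant with varying step sizes $(\tau_n,\sigma_n,\theta_n)$; Algorithm \ref{alg: primal dual} is the constant-step, $\theta=1$ iteration, for which their Theorem 1 delivers only the $\mathcal{O}(1/n)$ decay of the \emph{ergodic} primal--dual gap plus convergence of the iterates \emph{without} a rate. Since, as you correctly observe, $A$ as defined in Definition \ref{def: discrete operators} is strictly upper triangular and hence singular, neither $G$ nor $F^*$ is strongly convex, so there is no theorem in the reference ``under whose hypotheses the iterate rate holds'' to appeal to. The proposed repair---strong convexity restricted to $(\ker A)^\perp$ plus uniqueness of the saddle point---corresponds to no result in that reference and does not work as sketched: the Chambolle--Pock gap estimate is blind to strong convexity on a subspace, the component of $u^n-u^*$ along $\ker A$ is precisely what such an argument fails to control, and uniqueness of $(u^*,v^*)$ is itself doubtful when $A$ is singular (the primal objective need not be strictly convex, and the dual solution is generically non-unique). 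A smaller defect: your step-size certification fails numerically, since with the paper's $h=1/128$ and $\tau=\gamma=0.2$ one gets $\tau\gamma\,(8/h^2)\approx 5243\gg 1$; the bound $\|\nabla_h\|^2\le 8/h^2$ certifies admissibility only if the gradient is scaled to unit grid spacing. In short, your reduction is faithful and more careful than the paper's, but the stated $\mathcal{O}(1/n)$ rate on $\|u^n-u^*\|_{\ell^2(V_h)}$ cannot be ``inherited directly'' from \cite{Chambolle11} under these hypotheses; what is inheritable is the ergodic gap rate and rate-free iterate convergence, and your proposal (like the paper's restatement) leaves that discrepancy unresolved.
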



\section{Auxiliary Results}
\label{sec: appendix}

To be self-contained, we include some results that we used in the main text.


\begin{proof}[Proof of Theorem \ref{thm: PJ uniqueness 2d}]
The arguments below are adapted from the proof of Theorem 1.A.1 in \cite{Gorenflo91}, where we have modified some calculations to fit our context. We will first show the existence of a solution and then the uniqueness.

Let $v$ be a function defined by Equation \eqref{eq: PJ solution 2d} and by Fubini's theorem:
\begin{align*}
\abelJ v(x) = \dfrac{1}{\sqrt{\pi}}\int_x^1 \dfrac{v(r)}{\sqrt{r-x}}\dd{r} &= -\dfrac{1}{\pi}\int_x^1 \dfrac{1}{\sqrt{r-x}} \int_r^1 \dfrac{\dd{g(y)}}{\sqrt{y-r}} \dd{r} \\
&= -\dfrac{1}{\pi}\int_x^1 \int_x^y \dfrac{1}{\sqrt{r-x}\sqrt{y-r}} \dd{r}\dd{g(y)} \\
&= -\int_x^1\dd{g(y)} = g(x),
\end{align*}
where we have used the identity \cite{Epstein08, Whittaker27}:
\begin{align}
\int_x^y (r-x)^{-1/2}(y-r)^{-1/2}\dd{r} = \pi \label{eq: gamma identity}
\end{align}
and the assumption that $\supp(g)\subset[0,1)$. Therefore, $v$ is a solution to Problem \eqref{eq: g=Jv model}. 

We now show that $v\in L^1(0,1)$. Decompose $g$ to be $g_1-g_2$, where $g_1$ and $g_2$ are two bounded decreasing functions such that $g_i(0)\ge0$ and $\supp(g_i)\subset[0,1)$, $i=1,2$. Such a decomposition is guaranteed by, for example, Theorem 3.27 in \cite{Folland99}. Therefore, $\dd{g} = \dd{g_1}-\dd{g_2}$, and 
\begin{align*}
v(r) = -\dfrac{1}{\sqrt{\pi}}\int_r^1 \dfrac{\dd{g_1(x)}}{\sqrt{x-r}} + \dfrac{1}{\sqrt{\pi}}\int_r^1 \dfrac{\dd{g_2(x)}}{\sqrt{x-r}}.
\end{align*}
By triangle inequality,
\begin{align*}
\int_0^1|v(r)|\dd{r} \le -\dfrac{1}{\sqrt{\pi}}\int_0^1\int_r^1 \dfrac{\dd{g_1(x)}}{\sqrt{x-r}}\dd{r} - \dfrac{1}{\sqrt{\pi}}\int_0^1\int_r^1 \dfrac{\dd{g_2(x)}}{\sqrt{x-r}},
\end{align*}
where the two minus signs on the right-hand side come from the fact that $g_1$ and $g_2$ are decreasing functions. For $i=1,2$, we have
\begin{align}
-\int_0^1 \int_r^1 \dfrac{\dd{g_i(x)}}{\sqrt{x-r}}\dd{r} =  -2\int_0^1 \sqrt{x}\dd{g_i(x)} \le -2\int_0^1 \dd{g_i(x)} = 2g_i(0) < \infty. \label{eq: PJ uniqueness eq1 2d}
\end{align}
Therefore, $v\in L^1(0,1)$.

We now prove the uniqueness of solutions. Let $v\in L^1(0,1)$ be in the null space of $\abelJ$, {\it i.e.}
\begin{align}
\dfrac{1}{\sqrt{\pi}} \int_x^1 \dfrac{v(r)}{\sqrt{r-x}}\dd{r} = 0, \quad x\in[0,1]. \label{eq: PJ uniqueness eq2 2d}
\end{align}
Choosing a $y\in[0,1]$ and using Fubini's theorem with Equations \eqref{eq: gamma identity} and \eqref{eq: PJ uniqueness eq2 2d}, we have:
\begin{align*}
0 &= \int_y^1\left( \dfrac{1}{\sqrt{\pi}} \int_x^1 \dfrac{v(r)}{\sqrt{r-x}}\dd{r}\right)\,\dfrac{1}{\sqrt{\pi}\sqrt{x-y}}\dd{x}, \\
&= \int_y^1\left( \dfrac{v(r)}{\pi} \int_y^r \dfrac{1}{\sqrt{r-x}\sqrt{x-y}}\,\dd{x}\right)\dd{r} = \int_y^1 v(r)\dd{r}.
\end{align*}
Since $\int_y^1 v(r)\dd{r}=0$ for all $y\in[0,1]$, by Lebesgue differentiation theorem, we have $v=0$ almost everywhere on $[0,1]$. This shows that Problem \eqref{eq: g=Jv model} has a unique solution.
\end{proof}


To relate various semi-norms and norms in Cartesian and cylindrical coordinates, we have the following two propositions.


\begin{proposition} \label{prop: TV norm cartesian}
If $u:B(0,1)\subset\R^2\to\R$ is an axisymmetric function and is of bounded variation in $B(0,1)$, then 
\begin{align*}
\|u\|_{TV(B(0,1))} = 2\pi \|u\|_{TV(0,1),r},
\end{align*}
where
\begin{align*}
\|u\|_{TV(B(0,1))} &:=  \sup\left\{ \iint_{B(0,1)} u(x,y) \dir\phi(x,y)\dd{x}\dd{y}: \phi\in C_c^1(B(0,1);\R^2), \, \|\phi\|_{L^\infty(B(0,1))}\le1 \right\}, \\
\|u\|_{TV(0,1),r} &:=  \sup\left\{ \int_0^1 u(r) \dir(r\phi(r))\dd{r}: \phi\in C_c^1((0,1);\R^2), \, \|\phi\|_{L^\infty(0,1)}\le1 \right\}.
\end{align*}
\end{proposition}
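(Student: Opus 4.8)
The plan is to prove the identity by establishing the two inequalities $\|u\|_{TV(B(0,1))}\le 2\pi\|u\|_{TV(0,1),r}$ and $\|u\|_{TV(B(0,1))}\ge 2\pi\|u\|_{TV(0,1),r}$ separately, working directly from the dual (supremum) definitions and transferring each admissible test field on one side to an admissible test field on the other. The bridge is the passage to polar coordinates $(x,y)=(r\cos\theta,r\sin\theta)$, together with the elementary observation that, since $u$ depends only on $r$, a vector test field enters the radial functional only through its radial component. Accordingly, I will use that $\|u\|_{TV(0,1),r}$ equals the supremum of $\int_0^1 u(r)\frac{\mathrm{d}}{\mathrm{d}r}(r\psi(r))\dd{r}$ over scalar profiles $\psi\in C_c^1((0,1))$ with $|\psi|\le1$, and that, by integration by parts against $Du$ (valid because $r\psi\in C^1([0,1])$ vanishes at both endpoints), this supremum equals the weighted total variation $\int_{(0,1)} r\,\mathrm{d}|Du|$.

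For the lower bound, I would start from a scalar radial profile $\psi\in C_c^1((0,1))$ with $|\psi|\le1$ and lift it to the planar radial field $\Phi(x,y):=\psi(r)\,(x/r,\,y/r)$. Because $\psi$ vanishes near $r=0$ and near $r=1$, the field $\Phi$ extends to a function in $C_c^1(B(0,1);\R^2)$ with $\|\Phi\|_{L^\infty}=\|\psi\|_{L^\infty}\le1$, hence is admissible for the Cartesian problem. Writing the Euclidean divergence in polar coordinates, $\dir\Phi=\frac{1}{r}\partial_r(r\Phi_r)+\frac{1}{r}\partial_\theta\Phi_\theta$, and using $\Phi_r=\psi(r)$, $\Phi_\theta=0$, the area element $r\,\dd{r}\,\dd{\theta}$ cancels the $1/r$ and the $\theta$-integration produces a factor $2\pi$, giving $\iint_{B(0,1)} u\,\dir\Phi\,\dd{x}\,\dd{y}=2\pi\int_0^1 u(r)\frac{\mathrm{d}}{\mathrm{d}r}(r\psi(r))\dd{r}$. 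Taking the supremum over all such $\psi$ yields $\|u\|_{TV(B(0,1))}\ge 2\pi\|u\|_{TV(0,1),r}$.

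For the upper bound I would run the same computation in reverse. Given an admissible $\Phi\in C_c^1(B(0,1);\R^2)$ with $\|\Phi\|_{L^\infty}\le1$, I pass to polar coordinates and split $\Phi$ into its radial and angular components $\Phi_r,\Phi_\theta$. Using the polar divergence formula together with the radial symmetry of $u$, the angular contribution vanishes because $\int_0^{2\pi}\partial_\theta\Phi_\theta\,\dd{\theta}=0$ by periodicity, and after cancellation of the Jacobian one is left with $\iint_{B(0,1)} u\,\dir\Phi\,\dd{x}\,\dd{y}=2\pi\int_0^1 u(r)\frac{\mathrm{d}}{\mathrm{d}r}(r\psi(r))\dd{r}$, where $\psi(r):=\frac{1}{2\pi}\int_0^{2\pi}\Phi_r(r,\theta)\dd{\theta}$ is the angular average of the radial component. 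This $\psi$ is $C^1$, satisfies $|\psi|\le1$ as an average of quantities bounded by $|\Phi|\le1$, vanishes for $r$ near $1$, and obeys $\psi(0)=0$ since $\int_0^{2\pi}\cos\theta\,\dd{\theta}=\int_0^{2\pi}\sin\theta\,\dd{\theta}=0$. Integrating by parts against $Du$ and using $|\psi|\le1$ then gives $\iint_{B(0,1)} u\,\dir\Phi\le 2\pi\int_{(0,1)} r\,\mathrm{d}|Du|=2\pi\|u\|_{TV(0,1),r}$, and the supremum over $\Phi$ completes the bound.

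The main obstacle, and the step requiring care, is admissibility at the coordinate singularity $r=0$. In the lower bound the lift $\Phi=\psi(r)(x/r,y/r)$ is continuous at the origin only because $\psi$ is compactly supported away from $0$, which is precisely why the radial test class is $C_c^1((0,1))$ rather than $C^1([0,1])$. In the upper bound the averaged profile $\psi$ need not vanish near $0$, so $\psi$ is not literally in $C_c^1((0,1))$; here the weight $r$ rescues the argument, since $r\psi(r)\to0$ as $r\to0$ kills the endpoint term in the integration-by-parts formula, so that $\int_0^1 u\,\frac{\mathrm{d}}{\mathrm{d}r}(r\psi)\dd{r}=-\int_{(0,1)} r\psi\,\mathrm{d}(Du)\le\int_{(0,1)} r\,\mathrm{d}|Du|$ still holds. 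If one prefers to remain strictly within $C_c^1((0,1))$, one multiplies $\psi$ by a cutoff vanishing near $0$ and passes to the limit by dominated convergence against the finite measure $r\,\mathrm{d}|Du|$. The remaining manipulations, namely the polar divergence identity and the vanishing of the angular term, are routine.
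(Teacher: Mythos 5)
Your proposal is correct in substance but follows a genuinely different route from the paper's. The paper proves the identity first for $C^1$ axisymmetric functions by a pointwise computation: the chain rule gives $u_x^2+u_y^2=u_r^2$, hence $\iint_{B(0,1)}|\nabla u|\dd{x}\dd{y}=2\pi\int_0^1|u'(r)|\,r\dd{r}$ in polar coordinates, and the $BV$ case is then dispatched with the single sentence that ``the extension to $BV$ functions can be concluded from a density argument.'' You instead work at the $BV$ level throughout, proving the two inequalities by transporting admissible test fields across the dual definitions: lifting a radial profile $\psi$ to $\Phi=\psi(r)(x/r,y/r)$ for the lower bound, and angularly averaging the radial component $\Phi_r$ for the upper bound. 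What the paper's route buys is brevity in the smooth case; what it hides is that the density step is not free --- one needs an axisymmetric smooth approximating sequence along which \emph{both} total variations converge (lower semicontinuity alone gives only one of the two inequalities) --- and this is exactly the kind of care your argument renders unnecessary. Your route also correctly isolates the real difficulty, the coordinate singularity at $r=0$: the lifted field is admissible only because $\psi$ vanishes near the origin, and the averaged profile in the upper bound fails to be compactly supported there, which you repair via the weight $r$, the observation $\psi(0)=0$, and a cutoff/dominated-convergence argument. A naive test-function correspondence would break precisely at that point.

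One step needs patching. In the upper bound you pass to polar coordinates and integrate the two pieces of $\dir\Phi=\frac{1}{r}\partial_r(r\Phi_r)+\frac{1}{r}\partial_\theta\Phi_\theta$ separately, killing the angular term by periodicity. Term-by-term integration requires $u(r)\,\partial_r(r\Phi_r)$ and $u(r)\,\partial_\theta\Phi_\theta$ to be integrable in $\dd{r}\dd{\theta}$, i.e.\ $\int_0^1|u(r)|\dd{r}<\infty$ \emph{without} the Jacobian weight; membership $u\in L^1(B(0,1))$ only gives $\int_0^1|u(r)|\,r\dd{r}<\infty$, so this is not automatic (compare $u(r)=r^{-\alpha}$, which is fine, with the general a priori possibility of stronger blow-up at the origin). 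It does follow from structure you already have in hand: by your lower bound (or by slicing the planar $BV$ function), the radial profile is locally $BV$ on $(0,1)$ with $\int_{(0,1)}r\,\mathrm{d}|Du|<\infty$, and then the pointwise bound $|u(r)|\le|u(b)|+|Du|((r,b])$ together with Fubini gives $\int_0^b|u|\dd{r}\le b\,|u(b)|+\int_{(0,b]}s\,\mathrm{d}|Du|(s)<\infty$. Alternatively, integrate over the annulus $\{\epsilon<r<1\}$ and let $\epsilon\to0$, using $\epsilon\,|u(\epsilon)|\to0$ along a suitable sequence. The same remark is needed to justify your implicit assumption that $Du$ exists as a Radon measure on $(0,1)$ so that the identification $\|u\|_{TV(0,1),r}=\int_{(0,1)}r\,\mathrm{d}|Du|$ makes sense: the hypothesis $u\in BV(B(0,1))$ delivers this only after such a slicing or bootstrapping argument, so state it explicitly before invoking the integration by parts.
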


\begin{proof}
Let $u$ be a $C^1$ axisymmetric function with $u(r,\theta)=u(r)$. One can show that
\begin{align*}
\dfrac{\partial u}{\partial x} = \dfrac{\partial u}{\partial r}\cos\theta, \quad \dfrac{\partial u}{\partial y} =  \dfrac{\partial u}{\partial r}\sin\theta,
\end{align*}
which implies that $u_x^2 + u_y^2 = u_r^2$. Therefore,
\begin{align*}
\iint_{B(0,1)}|\nabla u(x,y)|\dd{x}\dd{y} &= \iint_{B(0,1)}\left|\left(u_x(x,y),u_y(x,y)\right)\right|\dd{x}\dd{y} \\
&= \int_0^{2\pi}\int_0^1|u_r(r,\theta)|r\dd{r}\dd{\theta} = 2\pi \int_0^1|u'(r)|r\dd{r}.
\end{align*}
The extension to $BV$ functions can be concluded from a density argument.
\end{proof}


The following three results provide information about the continuity of the $\abelJ$-transform.


\begin{proposition} \label{prop: A-J norm}
Assume that $u:\R^2\to\R$ is an axisymmetric function which is compactly supported within the ball $B(0,1)\subset\R^2$, and that $v:\R^+\to\R$ is the function such that $v(r^2) = u(r)$. Let $f := \abel u$ and $g:=\abelJ v$. If $u$ and $v$ are smooth, then 
\begin{subequations}
\begin{align}
\|v\|_{L^2(0,1)}^2 & = \dfrac{1}{\pi}\|u\|_{L^2(B(0,1))}^2, \\
\|v'\|_{L^1(0,1)} &= \|u'\|_{L^1(0,1)}, \\
\|g\|_{L^2(0,1)}^2 &\le 2\|f\|_{L^2(0,1)}^2.
\end{align} \label{eq: r-bound to xy-bound 2d}
\end{subequations}
Similarly, assume that $u:\R^3\to\R$ is an axisymmetric function which is compactly supported in the cylinder $U\subset\R^3$, and that $v:\R^+\times\R\to\R$ is the function such that $v(r^2,z) = u(r,z)$. Let $f = \abel u$ and $g=\abelJ v$. If $u$ and $v$ are smooth, then
\begin{subequations}
\begin{align}
\|v\|_{L^2(\Omega)}^2 &= \dfrac{1}{\pi}\|u\|_{L^2(U)}^2, \\
\|Dv\|_{L^1(\Omega)} &\le 2\|Du\|_{L^1(\Omega)}, \\
\|g\|_{L^2(\Omega)}^2 &\le 2\|f\|_{L^2(\Omega)}^2.
\end{align} \label{eq: r-bound to xy-bound 3d}
\end{subequations}
\end{proposition}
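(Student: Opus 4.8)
The plan is to reduce every one of the six identities to the single substitution $s = r^2$ (together with $s = x^2$ on the data side), after which each statement becomes either an exact change-of-variables equality or a pointwise estimate that is trivial once the substitution has been performed. The two facts that drive everything are the Jacobian $\dd{s} = 2r\,\dd{r}$ and the chain rule $v_s(s,\cdot) = \tfrac{1}{2\sqrt{s}}\,u_r(\sqrt{s},\cdot)$, both legitimate since $u$ and $v$ are assumed smooth. I would treat the 2D and 3D versions in parallel, since each 3D statement is obtained from its 2D counterpart by carrying along the passive variable $z$ and integrating over $[-1,1]$.

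First I would establish the $L^2$ identities (the $(a)$ equations). Pushing $\|v\|_{L^2}^2$ through $s = r^2$ turns $\int_0^1 v(s)^2\,\dd{s}$ into $2\int_0^1 u(r)^2 r\,\dd{r}$; computing $\|u\|_{L^2(B(0,1))}^2$ in polar coordinates independently yields $2\pi\int_0^1 u(r)^2 r\,\dd{r}$, and the two differ exactly by the claimed factor $1/\pi$. In 3D the same computation runs verbatim with an extra $\int_{-1}^1 \dd{z}$ and cylindrical coordinates. For the data bounds (the $(c)$ equations) I would invoke the relation $f(x) = \sqrt{\pi}\,g(x^2)$ from \eqref{eq: A-J connection 2d} (and its stated 3D analogue), so that $\|g\|_{L^2}^2 = \tfrac{2}{\pi}\int_0^1 f(x)^2 x\,\dd{x}$; since $x \le 1$ on the support and $2/\pi \le 2$, this is bounded by $2\|f\|_{L^2}^2$.

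The substantive step is the derivative/gradient bound (the $(b)$ equations), and I expect the 3D case to be the only real obstacle. In 2D it is in fact an exact equality: the Jacobian $2r$ cancels the $1/(2r)$ produced by the chain rule, so $\|v'\|_{L^1} = \|u'\|_{L^1}$. In 3D the substitution no longer acts as a constant multiple on the gradient, because the radial partial acquires the weight $1/(2\sqrt{s})$ while the $z$-partial is untouched. After the substitution one finds $\|Dv\|_{L^1(\Omega)} = \int_{-1}^1\int_0^1 \sqrt{u_r^2 + 4r^2 u_z^2}\,\dd{r}\,\dd{z}$, which must be compared with $\int_{-1}^1\int_0^1 \sqrt{u_r^2 + u_z^2}\,\dd{r}\,\dd{z}$. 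The factor $2$ then comes from the pointwise inequality $\sqrt{u_r^2 + 4r^2 u_z^2} \le 2\sqrt{u_r^2 + u_z^2}$, valid because $r \le 1$ forces $4r^2 \le 4$; this is precisely where tightness is lost and where the constant in \eqref{eq: PA bv bound 3d} inherits its value. All of these manipulations are justified by the assumed smoothness of $u$ and $v$, and the passage to general $BV$ data is not needed here — it is supplied separately by the density arguments in Theorems \ref{thm: PJ bv bound 2d} and \ref{thm: PJ bv bound 3d}.
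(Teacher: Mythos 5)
Your proposal is correct and takes essentially the same approach as the paper's proof: every identity is reduced to the substitution $s=r^2$, with polar/cylindrical coordinates for the $L^2$ identities, the chain-rule cancellation of the Jacobian for the 2D derivative equality, the pointwise bound $\sqrt{u_r^2+4r^2u_z^2}\le 2\sqrt{u_r^2+u_z^2}$ (using $r\le1$) for the 3D gradient bound, and the relation $f(x)=\sqrt{\pi}\,g(x^2)$ for the data bounds. In fact your handling of the data bound is slightly more careful than the paper's displayed computation, which silently drops a factor of $1/\pi$ when identifying $\frac{2}{\sqrt{\pi}}\int_x^1 u(r)r\left(r^2-x^2\right)^{-1/2}\dd{r}$ with $f(x)$; your retained constant $2/\pi\le2$ gives the same (indeed a slightly tighter) conclusion.
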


\begin{proof}
Equation \eqref{eq: r-bound to xy-bound 2d} is a consequence of a change-of-variable and the chain rule: 
\begin{align*}
\|v\|_{L^2(0,1)}^2 &= \int_0^1 \left|u(\sqrt{r})\right|^2\dd{r} = \int_0^1 2r|u(r)|^2\dd{r} \\
&= \dfrac{1}{\pi}\iint_{B(0,1)} |u(x,y)|^2\dd{x}\dd{y} = \dfrac{1}{\pi}\|u\|_{L^2(B(0,1))}^2, \\
\|v'\|_{L^1(0,1)} &= \int_0^1 \left|\dfrac{\dd{u(\sqrt{r})}}{\dd{r}}\right|\dd{r} = \int_0^1 \left|\dfrac{\dd{u(\sqrt{r})}}{\dd{\sqrt{r}}}\times\dfrac{\dd{\sqrt{r}}}{\dd{r}}\right|\dd{r} \\
&= \int_0^1 \left|\dfrac{u'(s)}{2s}\right|\dd{s^2} = \int_0^1 |u'(r)|\dd{r} = \|u'\|_{L^1(0,1)}, \\
\|g\|_{L^2(0,1)}^2 &= \int_0^1 \left(\dfrac{1}{\sqrt{\pi}} \int_x^1 \dfrac{u(\sqrt{r})}{\sqrt{r-x}}\dd{r}\right)^2 \dd{x} = \int_0^1 \left(\dfrac{2}{\sqrt{\pi}} \int_{\sqrt{x}}^1 \dfrac{u(r)r}{\sqrt{r^2-x}}\dd{r}\right)^2 \dd{x} \\
&= \int_0^1 2x\left(\dfrac{2}{\sqrt{\pi}} \int_x^1 \dfrac{u(r)r}{\sqrt{r^2-x^2}}\dd{r}\right)^2 \dd{x} = \int_0^1 2x\left|f(x)\right|^2 \dd{x} \le 2\|f\|_{L^2(0,1)}^2.
\end{align*}

Equation \eqref{eq: r-bound to xy-bound 3d} can be obtained from Equation \eqref{eq: r-bound to xy-bound 2d} using the same calculation as above:
\begin{align*}
\|v\|_{L^2(\Omega)}^2 &= \int_\Omega \left|u(\sqrt{r},z)\right|^2\dd{r}\dd{z} = \dfrac{1}{\pi}\|u\|_{L^2(U)}^2, \\
\|Dv\|_{L^1(\Omega)} &= \int_{-1}^1\int_0^1 \sqrt{v_r(r,z)^2+v_z(r,z)^2} \dd{r}\dd{z} \\
&= \int_{-1}^1\int_0^1 \sqrt{u_r(r,z)^2+4r^2u_z(r,z)^2} \dd{r}\dd{z} \\
&\le 2 \int_{-1}^1\int_0^1 \sqrt{u_r(r,z)^2+u_z(r,z)^2} \dd{r}\dd{z} = 2\|Du\|_{L^1(\Omega)}, \\
\|g\|_{L^2(\Omega)}^2 &= \int_{-1}^1\int_0^1 2x\left|f(x,z)\right|^2 \dd{x}\dd{z} \le 2\|f\|_{L^1(\Omega)}^2.
\end{align*}
\end{proof}


\begin{theorem} \label{thm: J lp continuity 2d}
{\rm (a special case of Theorem 4.1.1 in \cite{Gorenflo91})}
If $v\in L^p(0,1)$, $1 \le p \le 2$, and $s=p(1-p(1/2-\epsilon))^{-1}$ with $\epsilon>0$, then
\begin{align*}
\|\abelJ v\|_{L^s(0,1)} \le \dfrac{1}{\sqrt{\pi}}\left(1+\dfrac{1}{2\epsilon}\right)^{1/2+\epsilon}\|v\|_{L^p(0,1)}. 
\end{align*}
\end{theorem}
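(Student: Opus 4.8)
The plan is to recognize $\abelJ$ as a convolution operator and apply Young's convolution inequality, which is precisely the tool used repeatedly throughout the paper. Extending $v$ by zero outside $(0,1)$, we have $\abelJ v(x) = \frac{1}{\sqrt{\pi}}\int_x^1 (r-x)^{-1/2} v(r)\,\dd{r}$ for $x\in(0,1)$. First I would introduce the truncated kernel $k(u) := \frac{1}{\sqrt{\pi}}|u|^{-1/2}\1_{(-1,0)}(u)$ on $\R$ and verify that $\abelJ v = k * v$ on $(0,1)$: since both $x$ and $r$ range over $(0,1)$, the difference $x-r$ always lies in $(-1,0)$, so the indicator $\1_{(-1,0)}$ discards nothing, and $(k*v)(x) = \frac{1}{\sqrt{\pi}}\int_x^1 (r-x)^{-1/2}v(r)\,\dd{r}$ as desired.

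With the convolution structure in hand, Young's inequality gives $\|\abelJ v\|_{L^s(0,1)} \le \|k*v\|_{L^s(\R)} \le \|k\|_{L^q(\R)}\|v\|_{L^p(0,1)}$, where the exponents obey $1 + \tfrac{1}{s} = \tfrac{1}{q} + \tfrac{1}{p}$. The hypothesis on $s$ is equivalent to $\tfrac{1}{s} = \tfrac{1}{p} - \tfrac{1}{2} + \epsilon$, so solving for $q$ yields $\tfrac{1}{q} = \tfrac{1}{2} + \epsilon$, i.e. $q = \tfrac{2}{1+2\epsilon}$. It then remains a direct computation to evaluate the kernel norm: $\|k\|_{L^q(\R)}^q = \pi^{-q/2}\int_0^1 t^{-q/2}\,\dd{t} = \pi^{-q/2}(1-q/2)^{-1}$, valid because $q<2$. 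Substituting $1-\tfrac{q}{2} = \tfrac{2\epsilon}{1+2\epsilon}$ and $\tfrac{1}{q} = \tfrac{1}{2}+\epsilon$ gives $\|k\|_{L^q(\R)} = \frac{1}{\sqrt{\pi}}\left(1+\frac{1}{2\epsilon}\right)^{1/2+\epsilon}$, which is exactly the claimed constant.

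There is no deep obstacle here; the effort is in the bookkeeping, and the one genuine subtlety is the truncation. Observe that the naive Weyl kernel $\frac{1}{\sqrt{\pi}}|u|^{-1/2}\1_{(-\infty,0)}(u)$ fails to lie in $L^q(\R)$ for any $q$ (it is non-integrable at $-\infty$ when $q\le 2$ and non-integrable at $0$ when $q\ge 2$), so Young's inequality cannot be applied to it directly. Restricting the kernel to $(-1,0)$ is legitimate precisely because the data and evaluation points are confined to $(0,1)$, and it is this restriction that produces a finite $L^q$ norm and the explicit constant. I would also note that the argument requires $q\ge 1$, i.e. $\epsilon\le \tfrac{1}{2}$, which covers the choices $\epsilon=\tfrac{1}{2}$ with $p\in\{1,2\}$ actually invoked elsewhere in the paper (there $q=1$ and the constant reduces to $2/\sqrt{\pi}$).
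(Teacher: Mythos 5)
Your proof is correct and follows essentially the same route as the paper's: both write $\abelJ v$ as convolution with the compactly supported kernel $\pi^{-1/2}|x|^{-1/2}\1_{[0,1]}(-x)$, apply Young's inequality with $q^{-1}=1/2+\epsilon$, and evaluate $\|K\|_{L^q(\R)}$ directly to get the stated constant. Your added observations — that the untruncated Weyl kernel lies in no $L^q(\R)$, and that the argument implicitly requires $\epsilon\le 1/2$ so that $q\ge 1$ (which covers the cases $p\in\{1,2\}$, $\epsilon=1/2$ actually used in the paper) — are correct refinements that the paper's proof leaves unstated.
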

\begin{proof}
For the sake of completeness, we provide a proof which is skipped in \cite{Gorenflo91}.

We have:
\begin{align*}
\|\abelJ v\|_{L^s(0,1)} &= \left(\int_0^1 \left(\abelJ v(x)\right)^s \,{\rm d}x\right)^{1/s} = \dfrac{1}{\sqrt{\pi}}\left(\int_0^1 \left( \int_x^1 \dfrac{v(r)}{\sqrt{r-x}}\,{\rm d}r \right)^s \,{\rm d}x\right)^{1/s} \\
&= \dfrac{1}{\sqrt{\pi}}\left(\int_0^1 \left( K*f(x) \right)^s \,{\rm d}x\right)^{1/s} = \dfrac{1}{\sqrt{\pi}}\|K*f\|_{L^s(0,1)},
\end{align*}
where
\begin{align*}
K(x) = \dfrac{1}{\sqrt{|x|}}\mathds{1}_{[0,1]}(-x), \quad f(x) = v(x)\mathds{1}_{[0,1]}(x).
\end{align*}
Therefore, as a consequence of Young's inequality for convolutions, with $K$ and $f$ extended to $\R$,
\begin{align*}
\|\abelJ v\|_{L^s(0,1)} = \dfrac{1}{\sqrt{\pi}}\|K*f\|_{L^s(0,1)} \le \dfrac{1}{\sqrt{\pi}}\|K*f\|_{L^s(\R)} \le \dfrac{1}{\sqrt{\pi}}\|K\|_{L^q(\R)}\|f\|_{L^p(\R)},
\end{align*}
where $q$ solves $s^{-1} = p^{-1}+q^{-1}-1$,
{\it i.e.} $q=(1/2+\epsilon)^{-1}$. The $L^q$ norm of $K$ is equal to $\left(1+1/(2\epsilon)\right)^{1/2+\epsilon}$. The $L^p$ norm of $f$ is equal to $\|v\|_{L^p(0,1)}$. 
Thus,
\begin{align*}
\|\abelJ v\|_{L^s(0,1)} \le \dfrac{1}{\sqrt{\pi}}\left(1+\dfrac{1}{2\epsilon}\right)^{1/2+\epsilon}\|v\|_{L^p(0,1)}.
\end{align*}
\end{proof}


\begin{corollary} \label{cor: J lp continuity 3d}
If $v\in L^p(\Omega)$, $1 \le p \le 2$, then
\begin{align*}
\|\abelJ v\|_{L^p(\Omega)} \le \dfrac{2}{\sqrt{\pi}}\|v\|_{L^p(\Omega)}. 
\end{align*}
\end{corollary}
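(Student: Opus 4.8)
The plan is to reduce the three-dimensional estimate to the one-dimensional Theorem~\ref{thm: J lp continuity 2d} by a slicing argument, exploiting that the $\abelJ$-transform in Equation~\eqref{eq: def abelJ 3d} integrates only in the first variable while holding $z$ fixed. The crucial preliminary observation is that the target constant $2/\sqrt{\pi}$ is exactly the constant produced by Theorem~\ref{thm: J lp continuity 2d} when the output exponent $s$ is forced to equal the input exponent $p$. Solving $s = p\bigl(1 - p(1/2 - \epsilon)\bigr)^{-1} = p$ forces $1/2 - \epsilon = 0$, that is, $\epsilon = 1/2$, which is permissible since the theorem only requires $\epsilon > 0$. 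Substituting $\epsilon = 1/2$ into the constant $\frac{1}{\sqrt{\pi}}\bigl(1 + \frac{1}{2\epsilon}\bigr)^{1/2+\epsilon}$ yields $\frac{1}{\sqrt{\pi}}(1+1)^1 = \frac{2}{\sqrt{\pi}}$, independent of $p$.

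With this choice in hand, I would fix the transverse variable. Since $v \in L^p(\Omega)$, Tonelli's theorem guarantees that the slice $v(\cdot,z)$ belongs to $L^p(0,1)$ for almost every $z \in [-1,1]$. For each such $z$, the definition~\eqref{eq: def abelJ 3d} gives $\abelJ v(\cdot,z) = \abelJ\bigl(v(\cdot,z)\bigr)$, so applying Theorem~\ref{thm: J lp continuity 2d} with $\epsilon = 1/2$ (hence $s = p$) to the slice produces
\[
\|\abelJ v(\cdot,z)\|_{L^p(0,1)} \le \frac{2}{\sqrt{\pi}}\,\|v(\cdot,z)\|_{L^p(0,1)}.
\]

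Finally, I would raise both sides to the $p$-th power, integrate in $z$ over $[-1,1]$, and invoke Tonelli's theorem again to recombine the iterated integrals into $\|\abelJ v\|_{L^p(\Omega)}^p$ and $\|v\|_{L^p(\Omega)}^p$; taking the $p$-th root then gives the stated inequality. I do not anticipate any genuine obstacle: the entire content lies in recognizing the admissible endpoint choice $\epsilon = 1/2$ and verifying that it reproduces the constant $2/\sqrt{\pi}$, after which the slice-and-integrate reduction is routine.
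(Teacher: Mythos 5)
Your proof is correct and is essentially the argument the paper intends (the corollary is stated without proof, but the paper itself invokes Theorem~\ref{thm: J lp continuity 2d} with $\epsilon=1/2$ so that $s=p$ and the constant becomes $2/\sqrt{\pi}$, exactly as you do). Your slice-in-$z$ reduction via Tonelli, applying the 1D bound to $v(\cdot,z)$ and reintegrating, is the routine step the authors left implicit.
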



\begin{proposition} \label{prop: J linf continuity 2d}
{\rm (adapted from \cite{Samko93})}
The operator $\abelJ$ defined in Equation \eqref{eq: def abelJ 2d} is a continuous operator from $L^\infty(0,1)$ into $C^{0,1/2}(0,1)$.
\end{proposition}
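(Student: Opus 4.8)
The plan is to show that $\abelJ$ is a \emph{bounded} linear operator from $L^\infty(0,1)$ into $C^{0,1/2}(0,1)$, which is equivalent to continuity. Since the $C^{0,1/2}$ norm is the sum of the sup norm and the $1/2$-H\"older seminorm $[\,\cdot\,]_{1/2}$, I would bound each of these by a fixed multiple of $\|v\|_{L^\infty(0,1)}$ and then add the two estimates.

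First I would control the sup norm. For any $x\in[0,1]$, bounding the integrand by $\|v\|_{L^\infty(0,1)}$ and using $\int_x^1 (r-x)^{-1/2}\,dr = 2\sqrt{1-x}\le 2$ gives $\|\abelJ v\|_{L^\infty(0,1)} \le \tfrac{2}{\sqrt{\pi}}\|v\|_{L^\infty(0,1)}$; this is exactly the elementary estimate already exploited in the proof of Theorem~\ref{thm: PJ w11 bound 3d}.

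The core of the argument is the H\"older seminorm. Fixing $0\le x<x'\le1$, I would split the difference of the two kernels by writing
$$\sqrt{\pi}\left(\abelJ v(x)-\abelJ v(x')\right)=\int_x^{x'}\frac{v(r)}{\sqrt{r-x}}\,dr+\int_{x'}^1 v(r)\left[\frac{1}{\sqrt{r-x}}-\frac{1}{\sqrt{r-x'}}\right]dr,$$
separating a ``near'' contribution over $[x,x']$ from a ``kernel-difference'' contribution over $[x',1]$. The near term is bounded at once by $\|v\|_{L^\infty(0,1)}\int_x^{x'}(r-x)^{-1/2}\,dr = 2\|v\|_{L^\infty(0,1)}\sqrt{x'-x}$. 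For the far term, the bracket is negative because $x<x'$, so its modulus is at most $\|v\|_{L^\infty(0,1)}\int_{x'}^1\left[(r-x')^{-1/2}-(r-x)^{-1/2}\right]dr$, and this last integral evaluates explicitly to $2\sqrt{x'-x}+2\sqrt{1-x'}-2\sqrt{1-x}$.

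The one point requiring care — and the main, albeit minor, obstacle — is bounding this explicit expression by $2\sqrt{x'-x}$. Since $\sqrt{1-x'}\le\sqrt{1-x}$, the two boundary terms are together nonpositive and only decrease the quantity, so the bound holds; factoring $2\sqrt{x'-x}$ out and using $x'-x\le 1-x$ confirms that the remaining factor lies in $[0,1]$. Adding the two contributions yields $|\abelJ v(x)-\abelJ v(x')|\le \tfrac{4}{\sqrt{\pi}}\|v\|_{L^\infty(0,1)}\,|x-x'|^{1/2}$, so $[\abelJ v]_{1/2}\le \tfrac{4}{\sqrt{\pi}}\|v\|_{L^\infty(0,1)}$. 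Combining with the sup-norm bound gives $\|\abelJ v\|_{C^{0,1/2}(0,1)}\le \tfrac{6}{\sqrt{\pi}}\|v\|_{L^\infty(0,1)}$, which establishes the claimed continuity.
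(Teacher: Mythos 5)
Your proof is correct and follows essentially the same route as the paper's own argument: the identical near/far splitting of $\sqrt{\pi}\left(\abelJ v(x)-\abelJ v(x')\right)$ into the integral over $[x,x']$ plus the kernel-difference integral over $[x',1]$, the same elementary evaluations, and even the same constants $2/\sqrt{\pi}$ for the sup norm, $4/\sqrt{\pi}$ for the H\"older seminorm, and $6/\sqrt{\pi}$ overall. No gaps to address.
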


\begin{proof}
The arguments below are adapted from the proof of Corollary 2 on page 56 in \cite{Samko93}, where we have modified some calculations to fit our context. 

Let $v\in L^\infty(0,1)$. Fix $x$ and $h$ such that $0\le x< x+h\le1$. By triangle inequality,
\begin{align*}
\sqrt{\pi}|\abelJ v(x+h)-\abelJ v(x)| &= \left| \int_{x+h}^1 \dfrac{v(r)}{\sqrt{r-x-h}}\,{\rm d}r - \int_x^1 \dfrac{v(r)}{\sqrt{r-x}}\,{\rm d}r\right| \\
&= \left| \int_{x+h}^1 \dfrac{v(r)}{\sqrt{r-x-h}}-\dfrac{v(r)}{\sqrt{r-x}}\,{\rm d}r - \int_x^{x+h} \dfrac{v(r)}{\sqrt{r-x}}\,{\rm d}r \right| \\
&\le \left( \int_{x+h}^1 \dfrac{1}{\sqrt{r-x-h}}-\dfrac{1}{\sqrt{r-x}}\,{\rm d}r + \int_x^{x+h} \dfrac{1}{\sqrt{r-x}}\,{\rm d}r \right)\|v\|_{L^\infty(0,1)} \\
&= \left( 4\sqrt{h} + 2\sqrt{1-x-h} - 2\sqrt{1-x} \right)\|v\|_{L^\infty(0,1)} \\
&\le 4\sqrt{h}\|v\|_{L^\infty(0,1)},
\end{align*}
whence $|\abelJ v|_{C^{0,1/2}(0,1)} \le 4\pi^{-1/2}\|v\|_{L^\infty(0,1)}$. On the other hand,
\begin{align*}
\sqrt{\pi}|\abelJ v(x)| \le \left( \int_x^1 \dfrac{1}{\sqrt{r-x}}\,{\rm d}r \right)\|v\|_{L^\infty(0,1)} = 2\sqrt{1-x}\|v\|_{L^\infty(0,1)} \le 2\|v\|_{L^\infty(0,1)},
\end{align*}
and thus $\|\abelJ v\|_{L^\infty(0,1)}\le 2\pi^{-1/2}\|v\|_{L^\infty(0,1)}$. Therefore, 
\begin{align*}
\|\abelJ v\|_{C^{0,1/2}(0,1)} = |\abelJ v|_{C^{0,1/2}(0,1)} + \|\abelJ v\|_{L^\infty(0,1)} \le 6\pi^{-1/2}\|v\|_{L^\infty(0,1)}.
\end{align*}
This completes the proof.
\end{proof}


The remaining results provide some $BV$ estimates used in Section \ref{sec: theory}.


\begin{lemma} \label{lem: partial total variation}
{\rm (restated from \cite{Evans92, Ambrosio00, Bergounioux10})} 
Let $v\in BV(\Omega)$. For almost every $z\in[-1,1]$, the marginal function $v^z: r\to v(r, z)$ is of bounded variation on $[0,1]$. Moreover, 
\begin{align*}
\int_{-1}^1\|v(\cdot,z)\|_{TV(0,1)}\dd{z} \le \|v\|_{TV(\Omega)}.
\end{align*}
\end{lemma}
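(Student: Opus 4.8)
The plan is to reduce the statement to the classical one-dimensional slicing theory for $BV$ functions. Writing $v\in BV(\Omega)$ means that the distributional gradient $Dv=(D_1v,D_2v)$ is a finite $\R^2$-valued Radon measure on $\Omega$ whose total variation satisfies $|Dv|(\Omega)=\|v\|_{TV(\Omega)}$. The desired bound then factors into two independent pieces: an identity expressing the integrated sectional variation in terms of the first-component measure $D_1v$, and a pointwise domination of that component by the full gradient measure. First I would record the trivial half, the domination $|D_1v|(\Omega)\le|Dv|(\Omega)$. Using the polar decomposition $Dv=\theta\,|Dv|$ with $|\theta|=1$ holding $|Dv|$-almost everywhere, one has $D_1v=\theta_1\,|Dv|$ and hence $|D_1v|=|\theta_1|\,|Dv|\le|Dv|$ as measures, so that $|D_1v|(\Omega)\le|Dv|(\Omega)=\|v\|_{TV(\Omega)}$.

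The substantive half is the slicing identity
\begin{align*}
\int_{-1}^1 \|v(\cdot,z)\|_{TV(0,1)}\dd{z} = |D_1v|(\Omega),
\end{align*}
together with the assertion that $v(\cdot,z)\in BV(0,1)$ for almost every $z$. This is precisely the content of the cited sectioning results (Theorem 2 on page 220 of \cite{Evans92}, or the analogous statements in \cite{Ambrosio00, Bergounioux10}), so I would invoke them directly: disintegrating the one-dimensional restrictions of $Dv$ along the lines $\{z=\text{const}\}$ simultaneously yields the almost-everywhere $BV$ regularity of the slices and the equality of the integrated sectional total variations with $|D_1v|(\Omega)$. Combining this identity with the domination from the previous paragraph gives the claimed inequality.

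If a self-contained argument were wanted in place of the citation, I would work from the duality definition of the $TV$ semi-norm. Restricting to test fields of the special form $\phi=(\phi_1,0)$ with $\phi_1\in C_c^1(\Omega)$ and $\|\phi_1\|_{L^\infty(\Omega)}\le1$, the identity $\dir\phi=\partial_r\phi_1$ together with Fubini gives
\begin{align*}
\iint_\Omega v\,\dir\phi\dd{r}\dd{z}=\int_{-1}^1\Big(\int_0^1 v(r,z)\,\partial_r\phi_1(r,z)\dd{r}\Big)\dd{z}\le\|v\|_{TV(\Omega)},
\end{align*}
the last inequality holding because such a $\phi$ is admissible in the definition of $\|v\|_{TV(\Omega)}$. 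The goal is then to choose $\phi_1$ so that, for almost every $z$, the inner integral approaches the sectional supremum $\|v(\cdot,z)\|_{TV(0,1)}$; passing to the supremum would yield the desired bound. The genuine obstacle is exactly this simultaneous realization: one must select, for almost every $z$, a near-optimal one-dimensional test function $\psi_z\in C_c^1(0,1)$ and assemble the family $\{\psi_z\}$ into a single field $\phi_1(r,z)$ that is jointly $C^1$ with compact support while still satisfying $\|\phi_1\|_{L^\infty}\le1$. This measurable-selection and mollification step is precisely what the cited slicing theorems package cleanly, which is why I would lean on \cite{Evans92, Ambrosio00, Bergounioux10} rather than reproduce the construction.
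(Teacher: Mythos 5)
Your proposal is correct and takes essentially the same route as the paper, which gives no argument of its own for this lemma beyond citing the classical slicing theory of $BV$ functions in \cite{Evans92, Ambrosio00, Bergounioux10}. Your explicit two-step reduction --- the slicing identity $\int_{-1}^1\|v(\cdot,z)\|_{TV(0,1)}\dd{z}=|D_1v|(\Omega)$ for almost-every-slice $BV$ regularity, combined with the polar-decomposition domination $|D_1v|\le|Dv|$ as measures --- is precisely what those references package, and your duality sketch correctly identifies the measurable-selection/mollification issue as the only genuine technical content hidden in the citation.
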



\begin{theorem} \label{thm: approximating sequence}
{\rm (adapted from \cite{Evans92})} 
Assume $f\in BV(U)\cap L^\infty(U)$, where $U$ is a bounded open subset of $\R^n$. Given $p\in[1,\infty)$, there exists a sequence $\{f_k\}_{k=1}^\infty\subset Lip(U)$ such that
\begin{enumerate}[(i)]
\item \label{list: approximating sequence cond1} $f_k\to f$ in $L^p(U)$ as $k\to\infty$,
\item \label{list: approximating sequence cond2} $\|f_k\|_{TV(U)}\to\|f\|_{TV(U)}$ as $k\to\infty$, and
\item \label{list: approximating sequence cond3} $\|f_k\|_{L^\infty(U)}\le\|f\|_{L^\infty(U)}$ for all $k$.
\end{enumerate}
\end{theorem}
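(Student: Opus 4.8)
The plan is to start from the classical interior smooth-approximation theorem for $BV$ functions \cite{Evans92}, which, for $f \in BV(U)$, produces a sequence $\{f_k\}_{k=1}^\infty \subset \lip(U)$ with $f_k \to f$ in $L^1(U)$ and $\|f_k\|_{TV(U)} \to \|f\|_{TV(U)}$. This immediately gives condition (i) for $p=1$ together with condition (ii), but it supplies neither the uniform $L^\infty$ bound (iii) nor $L^p$ convergence for $p>1$. The strategy is to repair both defects by a single truncation step and then re-examine the three conditions for the truncated sequence.

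Writing $M := \|f\|_{L^\infty(U)}$, I would define the truncated functions
\begin{align*}
g_k := \max\{-M, \min\{M, f_k\}\}.
\end{align*}
The truncation map $T(s) := \max\{-M, \min\{M, s\}\}$ is $1$-Lipschitz, so $g_k = T \circ f_k \in \lip(U)$ and $\|g_k\|_{L^\infty(U)} \le M$, which is exactly condition (iii). It remains to show that truncation preserves conditions (i) and (ii) and, in addition, upgrades the convergence to $L^p$.

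For condition (i), observe that $T$ is the nearest-point projection onto the convex interval $[-M, M]$, hence a contraction, and that $f = T \circ f$ almost everywhere since $|f| \le M$ a.e. Therefore $|g_k - f| = |T \circ f_k - T \circ f| \le |f_k - f|$ pointwise a.e., so $\|g_k - f\|_{L^1(U)} \le \|f_k - f\|_{L^1(U)} \to 0$. The $L^p$ convergence for general $p$ then follows from the uniform bound $|g_k - f| \le 2M$ via
\begin{align*}
\|g_k - f\|_{L^p(U)}^p = \int_U |g_k - f|^{p-1}|g_k - f| \, \dd{x} \le (2M)^{p-1}\|g_k - f\|_{L^1(U)} \to 0,
\end{align*}
establishing condition (i) for every $p \in [1, \infty)$.

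The main obstacle is condition (ii) for the truncated sequence, which I would obtain from a two-sided squeeze. For the upper bound, the chain rule for the Lipschitz composition $T \circ f_k$ gives $|Dg_k| \le |Df_k|$ a.e. --- the factor $|T'| \le 1$ and vanishes on the set $\{|f_k| > M\}$, while $Df_k = 0$ a.e.\ on the level set $\{|f_k| = M\}$ --- so $\limsup_{k \to \infty} \|g_k\|_{TV(U)} \le \lim_{k \to \infty}\|f_k\|_{TV(U)} = \|f\|_{TV(U)}$. For the lower bound, since $g_k \to f$ in $L^1(U)$, lower semicontinuity of the total variation under $L^1$ convergence yields $\|f\|_{TV(U)} \le \liminf_{k \to \infty}\|g_k\|_{TV(U)}$. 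The two inequalities together force $\|g_k\|_{TV(U)} \to \|f\|_{TV(U)}$, which is condition (ii). I expect the only genuinely delicate point to be the chain-rule inequality $|Dg_k| \le |Df_k|$; it is routine here because $f_k$ is Lipschitz (so $Df_k$ exists a.e.\ by Rademacher's theorem), whereas it would require the finer chain rule for $BV$ compositions had one started from merely $BV$ approximants.
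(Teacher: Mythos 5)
Your proof is correct, but it takes a genuinely different route from the paper's. The paper does not invoke the classical smooth-approximation theorem as a black box; it re-runs the entire Evans--Gariepy construction from \cite{Evans92} (nested open sets, a partition of unity with three-fold overlap, mollification parameters $\epsilon_k$ chosen for support containment and $L^p$ closeness of the pieces), inserts the truncation at level $\|f\|_{L^\infty(U)}$ into that construction, and then proves the key inequality $\limsup_{\epsilon\to0}\|\tilde f_\epsilon\|_{TV(U)}\le\|f\|_{TV(U)}$ by the duality argument: testing against $\phi\in C_c^1(U;\R^n)$, using that the truncated function is constant off the set where it coincides with the mollified sum, mollifier symmetry, and the overlap counting. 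The opposite inequality is lower semicontinuity, exactly as in your squeeze. Your version instead isolates a self-contained truncation lemma on top of the classical theorem: condition (iii) is immediate; condition (i) follows from the $1$-Lipschitz contraction property of the clamp plus the $(2M)^{p-1}$ interpolation bound --- which, pleasantly, gives (i) for every $p\in[1,\infty)$ at once, whereas the paper builds the fixed $p$ into its choice of the $\epsilon_k$ --- and condition (ii) follows from the a.e.\ chain-rule inequality $|D(T\circ f_k)|\le|Df_k|$ squeezed against lower semicontinuity. Your argument is shorter and more modular; what the paper's approach buys is that it never needs the Lipschitz/Sobolev chain rule or the fact that gradients vanish a.e.\ on level sets, staying entirely within the dual definition of the total variation.

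One caveat, which your write-up shares with the paper's own proof: the classical theorem in \cite{Evans92} produces approximants in $C^\infty(U)\cap BV(U)$, not in $\lip(U)$ as you state; such functions are locally Lipschitz, but their gradients may blow up near $\partial U$, and truncation does not repair that. So, strictly, both arguments deliver locally Lipschitz functions, and the literal conclusion $\{f_k\}_{k=1}^\infty\subset\lip(U)$ needs an extra word in either case. This is harmless for the way the theorem is used in Theorem \ref{thm: PJ bv bound 3d}, where only a.e.\ differentiability (Rademacher) and $W^{1,1}$ membership of the approximants matter, but you should quote the classical theorem accurately (smooth approximants) and then observe that smoothness, rather than global Lipschitzness, is all your chain-rule step actually requires.
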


Here $Lip(U)$ denotes the space of all functions on $U$ which are Lipschitz continuous on $U$.

\begin{proof}
The arguments below are adapted from the proof of Theorem 2 on page 172 in \cite{Evans92}. In particular, we want to construct an approximating sequence which is uniformly bounded in $L^\infty$ by $\|f\|_{L^\infty}$.

We start with the same construction as in \cite{Evans92}. Fix $\epsilon>0$, and define the open sets:
\begin{align*}
U_0 := \emptyset, \quad U_k:= \left\{ x\in U :  \dist(x,\partial U)>\dfrac{1}{m+k} \text{ and } \dist(x,0)\le\dfrac{1}{m+k}\right\}, \quad k\ge1, 
\end{align*}
where $m$ is a positive integer chosen sufficiently large such that:
\begin{align}
\|f\|_{TV(U\backslash U_1)} < \epsilon. \label{eq: approximating sequence eq1}
\end{align} 
Let $\{\zeta_k\}_{k=1}^\infty$ be a sequence of functions such that $\zeta_k \in C_c^\infty(V_k)$, $0\le\zeta_k\le1$, $k\ge1$, and
\begin{align}
\sum_{k=1}^\infty\zeta_k = 1 \quad {\rm on}\,\,U, \label{eq: approximating sequence eq2}
\end{align}
where
\begin{align}
V_k := U_{k+1}\backslash\overline{U}_{k-1}, \quad k\ge1. \label{eq: approximating sequence eq3}
\end{align}
Let $\eta$ be the standard mollifier. For each $k$, choose an $\epsilon_k>0$ sufficiently small such that:
\begin{subequations}
\begin{align}
&\supp(\eta_{\epsilon_k}*(f\zeta_k)) \subset V_k, \label{eq: approximating sequence eq4a} \\
&\|\eta_{\epsilon_k}*(f\zeta_k))-f\zeta_k\|_{L^p(U)} < \epsilon2^{-k}, \label{eq: approximating sequence eq4b} \\
&\|\eta_{\epsilon_k}*(fD\zeta_k))-fD\zeta_k\|_{L^p(U)} < \epsilon2^{-k}, \label{eq: approximating sequence eq4c}
\end{align}
\end{subequations}
The existence of such $\epsilon_k$ is guaranteed by the density of $\lip(U)$ in $L^p(U)$. Define
\begin{align}
f_\epsilon := \sum_{k=1}^\infty \eta_{\epsilon_k}*(f\zeta_k), \quad \tilde{f}_\epsilon := \max\{f_\epsilon,\|f\|_{L^\infty(U)}\}. \label{eq: approximating sequence eq5}
\end{align}
By Equation \eqref{eq: approximating sequence eq4a}, the sum $\sum_{k=1}^\infty \eta_{\epsilon_k}*(f\zeta_k)$ has finitely many nonzero terms when evaluated at each $x\in U$. Thus, $f_\epsilon \in C^\infty(U)$ and $\tilde{f}_\epsilon\in\lip(U)$. It can be seen immediately from Equation \eqref{eq: approximating sequence eq5} that $\|\tilde{f}_\epsilon\|_{L^\infty(U)}\le\|f\|_{L^\infty(U)}$ for all $\epsilon>0$, so that any subsequence $\{\tilde{f}_{\epsilon_k}\}_{k=1}^\infty$ of the family $\{\tilde{f}_\epsilon\}_{\epsilon>0}$ will satisfy condition \eqref{list: approximating sequence cond3}. We now show that the sequence $\{\tilde{f}_{\epsilon_k}\}_{k=1}^\infty$ can be chosen to satisfy conditions \eqref{list: approximating sequence cond1} and \eqref{list: approximating sequence cond2}. 

By partition of unity, it follows from Equations \eqref{eq: approximating sequence eq4b} and \eqref{eq: approximating sequence eq5} that:
\begin{align*}
\|\tilde{f}_\epsilon-f\|_{L^p(U)} \le \|f_\epsilon-f\|_{L^p(U)} \le \sum_{k=1}^\infty \|\eta_{\epsilon_k}*(f\zeta_k))-f\zeta_k\|_{L^p(U)} < \epsilon.
\end{align*}
Thus, $\tilde{f}_\epsilon\to f$ in $L^p(U)$ as $\epsilon\to0$, which proves condition \eqref{list: approximating sequence cond1}. 

By the $L^p$ embedding theorem:
\begin{align*}
\|\tilde{f}_\epsilon-f\|_{L^1(U)} \le C\|\tilde{f}_\epsilon-f\|_{L^p(U)}<C\epsilon,
\end{align*}
where $C$ is a constant depending only on $U$ and $p$. Thus, $\tilde{f}_\epsilon\to f$ in $L^1(U)$ as $\epsilon\to0$, and by the lower semicontinuity property of total variation:
\begin{align*}
\|f\|_{TV(U)} \le \liminf_{\epsilon\to0} \|\tilde{f}_\epsilon\|_{TV(U)}. 
\end{align*}

Following \cite{Evans92}, we now show that:
\begin{align}
\limsup_{\epsilon\to0} \|\tilde{f}_\epsilon\|_{TV(U)} \le \|f\|_{TV(U)} \label{eq: approximating sequence eq6}
\end{align}
to complete the proof for condition \eqref{list: approximating sequence cond2}. Let $\phi\in C_c^1(U;\R^n)$ with $\|\phi\|_{L^\infty(U)}\le1$. Let $\tilde{U}\subset U$ be the set such that $\tilde{f}_\epsilon=f_\epsilon$ on $\tilde{U}$ and $\tilde{f}_\epsilon=\|f\|_{L^\infty(U)}$ on $U\backslash\tilde{U}$. Since $\tilde{f}$ is constant outside $\tilde{U}$, we have
\begin{align}
\int_U \tilde{f}_\epsilon\dir(\phi)\,{\rm d}x = \int_{\tilde{U}} f_\epsilon\dir(\phi)\,{\rm d}x. \label{eq: approximating sequence eq7} 
\end{align}
For $k\geq1$, by Fubini's theorem, we have:
\begin{align}
\int_{\tilde{U}} \eta_{\epsilon_k}*(f\zeta_k)\dir(\phi)\,{\rm d}x &= \int_{\tilde{U}} \int_{\tilde{U}} \eta_{\epsilon_k}(x-y)f(y)\zeta_k(y)\dir(\phi(x))\,{\rm d}y \,{\rm d}x \nonumber \\
&= \int_{\tilde{U}} \int_{\tilde{U}} \eta_{\epsilon_k}(y-x)f(y)\zeta_k(y)\dir(\phi(x))\,{\rm d}x \,{\rm d}y \nonumber \\
&= \int_{\tilde{U}} (f\zeta_k)\eta_{\epsilon_k}*\dir(\phi)\,{\rm d}x, \label{eq: approximating sequence eq8} 
\end{align}
where the second step follows from the symmetry of $\eta_{\epsilon_k}$. Then applying the convolution-derivative theorem and the product rule, one can obtain:
\begin{align}
\int_{\tilde{U}} (f\zeta_k)\eta_{\epsilon_k}*\dir(\phi)\,{\rm d}x 
&= \int_{\tilde{U}} (f\zeta_k)\dir(\eta_{\epsilon_k}*\phi)\,{\rm d}x \nonumber \\
&= \int_{\tilde{U}} f\dir(\zeta_k(\eta_{\epsilon_k}*\phi))\,{\rm d}x - \int_{\tilde{U}} fD\zeta_k\cdot(\eta_{\epsilon_k}*\phi)\,{\rm d}x. \label{eq: approximating sequence eq9} 
\end{align}
Using the same calculation as in Equation \eqref{eq: approximating sequence eq8}, one can show that:
\begin{align}
\int_{\tilde{U}} fD\zeta_k\cdot(\eta_{\epsilon_k}*\phi)\,{\rm d}x = \int_{\tilde{U}} \phi\cdot\left( \eta_{\epsilon_k}*(fD\zeta_k)\right)\,{\rm d}x. \label{eq: approximating sequence eq10} 
\end{align}
Therefore, combining Equations \eqref{eq: approximating sequence eq5}-\eqref{eq: approximating sequence eq10} yields:
\begin{align*}
\int_U f_\epsilon\dir(\phi)\,{\rm d}x &= \sum_{k=1}^\infty \int_{\tilde{U}} f\dir(\zeta_k(\eta_{\epsilon_k}*\phi))\,{\rm d}x - \sum_{k=1}^\infty \int_{\tilde{U}} \phi\cdot\left( \eta_{\epsilon_k}*(fD\zeta_k)\right)\,{\rm d}x =: I_{1,\epsilon} + I_{2,\epsilon}.
\end{align*}
For $k\ge1$, $|\zeta_k(\eta_{\epsilon_k}*\phi)|\le1$ on $U$, and by Equation \eqref{eq: approximating sequence eq3}, each point in $U$ belongs to at most three of the sets $\{V_k\}_{k=1}^\infty$. Thus, 
\begin{align*}
|I_{1,\epsilon}| &= \left| \int_{\tilde{U}} f\dir(\zeta_1(\eta_{\epsilon_1}*\phi))\,{\rm d}x + \sum_{k=2}^\infty \int_{\tilde{U}} f\dir(\zeta_k(\eta_{\epsilon_k}*\phi))\,{\rm d}x\right| \\
&\le \|f\|_{TV(\tilde{U})} + \sum_{k=2}^\infty\|f\|_{TV(V_k)} \le \|f\|_{TV(U)} + 3\|f\|_{TV(U\backslash U_1)} < \|f\|_{TV(U)} + 3\epsilon,
\end{align*}
where the last step follows from Equation \eqref{eq: approximating sequence eq1}. Equation \eqref{eq: approximating sequence eq2} implies that $\sum_{k=1}^\infty D\zeta_k=0$ on $U$. Thus,
\begin{align*}
I_{2,\epsilon} = - \sum_{k=1}^\infty \int_{\tilde{U}} \phi\cdot\left( \eta_{\epsilon_k}*(fD\zeta_k)-fD\zeta_k\right)\,{\rm d}x,
\end{align*}
and by Equation \eqref{eq: approximating sequence eq4c}, $|I_{2,\epsilon}|<\epsilon$. Therefore,
\begin{align*}
\int_U \tilde{f}_\epsilon\dir(\phi)\,{\rm d}x < \|f\|_{TV(U)} + 4\epsilon,
\end{align*}
and
\begin{align*}
\|\tilde{f}_\epsilon\|_{TV(U)} \le \|f\|_{TV(U)} + 4\epsilon,
\end{align*}
which implies Equation \eqref{eq: approximating sequence eq6}. The proof is then complete.
\end{proof}


\begin{remark} \label{rem: approximating sequence}
In Theorem 2 on page 172 of \cite{Evans92}, a $C^\infty$ approximating sequence is constructed for $BV$ functions. For our arguments, a Lipschitz approximating sequence is sufficient in order to have the additional $L^\infty$ control.
\end{remark}


\section*{Acknowledgements}

L.Z. and H.S. acknowledge the support of AFOSR, FA9550-17-1-0125. 


\bibliographystyle{plain}
\bibliography{Paper17_Abel_references}

\end{document}